\newtheorem{thm}{Theorem}[section]
\newtheorem{prop}[thm]{Proposition}
\newtheorem{lem}[thm]{Lemma}
\newtheorem{cor}[thm]{Corollary}
\newtheorem{conj}[thm]{Conjecture}
\numberwithin{equation}{section}
\theoremstyle{definition}
\newtheorem{remark}[thm]{Remark}
\DeclareFontFamily{U}{mathc}{}
\DeclareFontShape{U}{mathc}{m}{it}%
{<->s*[1.03] mathc10}{}
\DeclareMathAlphabet{\mathcal}{U}{mathc}{m}{it}
\newcommand{\im}{\operatorname{im}}
\newcommand{\Db}{{\rm D}^{\rm b}}
\newcommand{\Aut}{{\rm Aut}}
\newcommand{\Br}{{\rm Br}}
\newcommand{\per}{{\rm per}}
\newcommand{\ind}{{\rm ind}}
\newcommand{\CH}{{\rm CH}}
\newcommand{\NS}{{\rm NS}}
\newcommand{\Pic}{{\rm Pic}}
\newcommand{\rk}{{\rm rk}}
\newcommand{\coker}{{\rm coker}}
\newcommand{\cal}{\mathcal}
\newcommand{\ka}{{\cal A}}
\newcommand{\kb}{{\cal B}}
\newcommand{\kc}{{\cal C}}
\newcommand{\ke}{{\cal E}}
\newcommand{\kk}{{\cal K}}
\newcommand{\kl}{{\cal L}}
\newcommand{\km}{{\cal M}}
\newcommand{\ko}{{\cal O}}
\newcommand{\kp}{{\cal P}}
\newcommand{\ky}{{\cal Y}}
\newcommand{\GG}{\mathbb{G}}
\newcommand{\ZZ}{\mathbb{Z}}
\newcommand{\QQ}{\mathbb{Q}}
\newcommand{\PP}{\mathbb{P}}
\newcommand{\Gm}{\mathbb{G}_m}
\DeclareSymbolFont{cyrletters}{OT2}{wncyr}{m}{n}
\DeclareMathSymbol{\Sha}{\mathalpha}{cyrletters}{"58}
\renewcommand{\to}{\xymatrix@1@=15pt{\ar[r]&}}
\newcommand{\lto}{\xymatrix@1@=15pt{&\ar[l]}}
\renewcommand{\rightarrow}{\xymatrix@1@=15pt{\ar[r]&}}
\renewcommand{\mapsto}{\xymatrix@1@=15pt{\ar@{|->}[r]&}}
\newcommand{\mapslto}{\xymatrix@1@=15pt{&\ar@{|->}[l]&}}
\renewcommand{\twoheadrightarrow}{\xymatrix@1@=18pt{\ar@{->>}[r]&}}
\renewcommand{\hookrightarrow}{\xymatrix@1@=15pt{\ar@{^(->}[r]&}}
\newcommand{\hook}{\xymatrix@1@=15pt{\ar@{^(->}[r]&}}
\newcommand{\twoo}{\twoheadrightarrow }
\newcommand{\congpf}{\xymatrix@1@=15pt{\ar[r]^-\sim&}}
\renewcommand{\cong}{\simeq}
\newcommand{\TBC}[1]{}
\newcommand{\MD}[1]{}
\newcommand{\EV}[1]{}
\def\blfootnote{\xdef\@thefnmark{}\@footnotetext}
\begin{document}

\title[The period-index problem for hyperk\"ahler manifolds]{The period-index problem for hyperk\"ahler manifolds}

\author[D.\ Huybrechts]{Daniel Huybrechts}

\address{Mathematisches Institut \& Hausdorff Center for Mathematics,
Universit{\"a}t Bonn, Endenicher Allee 60, 53115 Bonn, Germany}
\email{huybrech@math.uni-bonn.de}

\begin{abstract}  \vspace{-2mm} We conjecture that every unramified Brauer class
$\alpha\in \Br(X)$ on a projective hyperk\"ahler manifold $X$ satisfies $\ind(\alpha)\mid\per(\alpha)^{\dim(X)/2}$.
We provide evidence for this conjecture by proving it for two large classes of projective
hyperk\"ahler manifolds: For projective hyperk\"ahler manifolds admitting a Lagrangian
fibration and for Hilbert schemes of K3 surfaces.\end{abstract}

\maketitle
\blfootnote{The author is supported by the ERC Synergy Grant HyperK (ID 854361).}

Two numerical invariants are associated with every Brauer class $\alpha\in\Br(X)$ on a variety $X$: Its period $\per(\alpha)$, which is its order as an element of the abelian group $\Br(X)$,
and its index $\ind(\alpha)$. To define the latter recall that every class $\alpha$ can be represented
by a Brauer--Severi variety $P\to X$ with fibres $\PP^{m-1}$. Then $\ind(\alpha)$  can be defined as the gcd of all occurring $m$. \smallskip

It is not difficult to see that $\per(\alpha)$ divides $\ind(\alpha)$ and that
the two invariants have the same prime factors, cf.\ \cite[(5.3)]{Artin} or \cite[Prop.\ 4.5.13]{GS}. In particular, $\ind(\alpha)$ divides some power of $\per(\alpha)$. However, whether this power can be chosen independently of $\alpha$ and to what extent it
depends on $X$, remains largely open. The following conjecture has been put forward by Colliot-Th\'el\`ene
\cite{CTBr}.

\begin{conj}\label{conj1}
If $X$ is a factorial projective variety over an algebraically closed field, then 
$$\ind(\alpha)\mid\per(\alpha)^{\dim(X)-1}$$
for any Brauer class $\alpha\in\Br(X)$.
\end{conj}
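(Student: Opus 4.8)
\emph{Strategy.} The plan is to induct on $d=\dim(X)$ and to reduce the bound in dimension $d$ to the bound in dimension $d-1$ at the cost of a single factor of $\per(\alpha)$, throughout working with the description of $\ind(\alpha)$ as the greatest common divisor of the ranks of locally free $\alpha$-twisted sheaves on $X$. The base case is $d\le 2$. For $d\le 1$ the class $\alpha$ vanishes, since the Brauer group of the function field of a curve over an algebraically closed field is trivial by Tsen's theorem, so there is nothing to prove. For $d=2$ the asserted divisibility $\ind(\alpha)\mid\per(\alpha)$, combined with the general fact $\per(\alpha)\mid\ind(\alpha)$ recalled above, is precisely de Jong's theorem that period equals index on a smooth projective surface, equivalently over a field of transcendence degree two over an algebraically closed field. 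Factoriality enters here to ensure that $\Br(X)$ consists of unramified classes, matching the setting of de Jong's theorem and of the subsequent twisted-sheaf machinery.

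\emph{Inductive step.} Assume $d\ge 3$ and choose a very general smooth divisor $Y$ in a sufficiently positive ample linear system. The first task is to check that restriction does not change the period, i.e.\ $\per(\alpha|_Y)=\per(\alpha)$. This should follow from a Lefschetz-type injectivity statement for the Brauer group under restriction to an ample smooth divisor, which holds once $d\ge 3$, parallel to the injectivity of $H^2$ under a hyperplane section in that range; since an injective restriction preserves the order of $\alpha$ exactly, the period is unchanged. Factoriality of $X$ together with the Grothendieck--Lefschetz theorems should moreover let one arrange that $Y$ again satisfies the hypotheses of the conjecture, so that the induction hypothesis applies to $(Y,\alpha|_Y)$ and yields $\ind(\alpha|_Y)\mid\per(\alpha)^{d-2}$.

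\emph{The main obstacle.} It remains to compare the index on $X$ with the index on $Y$ through the divisibility $\ind(\alpha)\mid\per(\alpha)\cdot\ind(\alpha|_Y)$; feeding in the previous step then gives $\ind(\alpha)\mid\per(\alpha)^{d-1}$, as desired. In twisted-sheaf terms one starts from a locally free $\alpha|_Y$-twisted sheaf on $Y$ of rank $\ind(\alpha|_Y)$ and wishes to produce a locally free $\alpha$-twisted sheaf on $X$ of controlled rank, lifting the given one across the divisor $Y$ while absorbing the obstruction into a single multiplicative factor of $\per(\alpha)$. This lifting is exactly where a general argument is missing: the obstruction to extending a twisted sheaf over $X$ lives in a twisted cohomology group with no evident reason to vanish, and nothing is known that bounds the loss of index by $\per(\alpha)$ rather than by an uncontrolled factor. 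Equivalently, this step amounts to the period-index conjecture for function fields of transcendence degree $\ge 3$, which is open. The contribution of the present paper is precisely to supply a substitute for this lifting in two geometrically rich situations --- projective hyperk\"ahler manifolds with a Lagrangian fibration and Hilbert schemes of K3 surfaces --- where the extra structure compensates for the absence of a general mechanism, at the better cost recorded in the bound $\ind(\alpha)\mid\per(\alpha)^{\dim(X)/2}$.
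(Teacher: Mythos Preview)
The statement you are trying to prove is labelled \emph{Conjecture} in the paper, and with good reason: it is Colliot-Th\'el\`ene's period-index conjecture, which is open in dimension $\geq 3$. The paper does not contain a proof of it and does not claim to; it merely states the conjecture, records the known cases (curves trivially, surfaces by de Jong, abelian threefolds by Hotchkiss--Perry), and then moves on to the weaker hyperk\"ahler version (Conjecture~\ref{conj2}), which is the actual subject of the paper.

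Your proposal is not a proof but a strategy, and you yourself locate precisely where it fails: the divisibility $\ind(\alpha)\mid\per(\alpha)\cdot\ind(\alpha|_Y)$ in your ``main obstacle'' paragraph is exactly the open problem. There is no known mechanism to lift a twisted sheaf from an ample divisor $Y$ back to $X$ at the cost of a single factor of $\per(\alpha)$; if there were, the conjecture would follow by the induction you describe. A few of the preparatory steps are also not as clean as you suggest --- for instance, factoriality need not pass to a general ample divisor, and the Lefschetz-type injectivity for $\Br$ you invoke requires care --- but these are secondary to the central gap. In short, you have correctly reverse-engineered why the conjecture is hard, but that is not the same as proving it, and the paper makes no attempt to do so.
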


The conjecture holds true for curves, as they have trivial Brauer groups, and also for surfaces. The latter is a
result of de Jong \cite{dJ} in characteristic zero and of de Jong--Starr \cite{dJS} and Lieblich \cite{LiebAnn}
in positive characteristic. It has recently also been verified by Hotchkiss and Perry \cite{HoPe} for abelian varieties of dimension three.  The existence of a universal power depending only on 
the geometry of $X$, namely $\per(\alpha)^{2g}$ with $g$ the genus of a very ample complete
intersection curve, has recently been established in joint work with Mattei \cite{HuyMa} and
before  that by de Jong and Perry \cite{dJP} under the hypothesis that the standard Lefschetz conjecture holds in degree two.
In the appendix to a paper of Colliot-Th\'el\`ene \cite{CTGab}, Gabber showed that the bound is optimal. More precisely, in any dimension $d$, there exists
a product of curves $X=C_1\times\cdots\times C_d$ together with a Brauer class $\alpha\in \Br(X)$ such that $\ind(\alpha)=\per(\alpha)^{d-1}$.
However, for particular classes of varieties the exponent might be smaller, and this is the topic of the present article. 
\medskip

Apart from abelian varieties there is essentially only one other class of varieties with trivial canonical bundle and  non-trivial Brauer group: hyperk\"ahler manifolds. In many respects, hyperk\"ahler manifolds do not behave like general complex projective varieties, e.g.\ their geometry is largely determined by their
Hodge structure of weight two and not their middle cohomology. We expect hyperk\"ahler manifolds to be particular also with respect to the period-index problem in the following sense.

\begin{conj}\label{conj2}
Let $X$ be a projective hyperk\"ahler manifold and $\alpha\in \Br(X)$. Then
$$\ind(\alpha)\mid\per(\alpha)^{\dim(X)/2}.$$
\end{conj}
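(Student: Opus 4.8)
The plan is to turn the divisibility $\ind(\alpha)\mid \per(\alpha)^{\dim(X)/2}$ into the construction of a single $\alpha$-twisted coherent sheaf of controlled rank. Write $2n=\dim(X)$. Since $\ind(\alpha)$ is the greatest common divisor of the ranks of all $\alpha$-twisted sheaves (equivalently, the degree of the division algebra representing the image of $\alpha$ under the index-preserving injection of $\Br(X)$ into $\Br(k(X))$, valid because $X$ is smooth and proper), it suffices to exhibit one $\alpha$-twisted sheaf whose rank divides $\per(\alpha)^{n}$. I would establish this in the two geometries for which it is accessible, and only then address the reduction of the general case to them.

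\emph{The Lagrangian fibration case.} Let $f\colon X\longrightarrow B$ be a Lagrangian fibration, so $\dim B=n$ and (in the relevant cases $B\cong\PP^{n}$) the generic fibre $X_{\eta}$ over $\eta=\Spec K$, $K=k(B)$, is a torsor under the $n$-dimensional abelian variety $A_{\eta}=\Pic^{0}(X_{\eta})$. Because $k(X_{\eta})=k(X)$, the index of $\alpha$ equals that of its restriction $\alpha_{\eta}\in\Br(X_{\eta})$, while $\per(\alpha_{\eta})\mid\per(\alpha)$. Thus the problem becomes a period-index statement for an $n$-dimensional abelian variety over $K$, namely $\ind(\alpha_{\eta})\mid\per(\alpha_{\eta})^{n}$. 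To prove the latter I would use the polarization carried by the Lagrangian fibres: the associated theta bundle on a suitable symmetric power or compactified torsor yields a splitting datum, respectively an $\alpha_{\eta}$-twisted sheaf, contributing one factor of the period per dimension of the fibre. The part of $\alpha_{\eta}$ surviving on the geometric fibre (a genuine Brauer class of the abelian variety) and the vertical part measured by the Weil--Ch\^atelet/Tate--Shafarevich group of $f$ have to be treated in parallel, both ending up bounded by $\per^{n}$.

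\emph{The Hilbert scheme case.} For $X=S^{[n]}$ with $S$ a K3 surface I would use the period-preserving isomorphism $\Br(S)\cong\Br(S^{[n]})$. On the surface the settled case of the period-index problem (de Jong) gives $\ind(\alpha)=\per(\alpha)=:r$, hence an $\alpha$-twisted locally free sheaf $E$ on $S$ of rank $r$. The external power $E\boxtimes\cdots\boxtimes E$ on $S^{n}$ is twisted by the corresponding external class, has rank $r^{n}$, and carries a natural $\mathfrak{S}_{n}$-equivariant structure; descending it to the symmetric product $S^{(n)}$ and pulling back along the Hilbert--Chow morphism $S^{[n]}\longrightarrow S^{(n)}$ produces a twisted sheaf on $S^{[n]}$ for the class $\alpha_{n}$ matching $\alpha$. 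Its rank is $r^{n}=\per(\alpha_{n})^{n}=\per(\alpha_{n})^{\dim(X)/2}$, which is the assertion.

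\emph{The main obstacle.} Within each special case the delicate bookkeeping is that of the Brauer twist under the geometric operations: matching the external class, its $\mathfrak{S}_{n}$-descent, and its Hilbert--Chow pullback with $\alpha_{n}$, and, in the Lagrangian case, extending the generic-fibre sheaf over $X$ while verifying that the base contributes nothing (its Brauer group must be harmless). The genuinely hard point, however, lies in the general conjecture, which would require reducing an arbitrary projective hyperk\"ahler manifold to one of the two models above. Since $\ind$ is not visibly a deformation invariant, one must show that the bound $\ind\mid\per^{n}$ is stable under deformations preserving the Brauer class within a polarized (locally trivial) family, so that the loci admitting a Lagrangian fibration or a Hilbert-scheme description govern their entire deformation class. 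Proving such stability, and handling deformation types with no tractable member, is where the problem remains open.
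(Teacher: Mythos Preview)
The statement is a \emph{conjecture} and the paper does not prove it; it only establishes the two special cases you single out (Theorems~\ref{thm1} and~\ref{thm2}) and explicitly flags the deformation argument you mention as the open problem, cf.\ Remark~\ref{rem:notfeas}. So your proposal is not a proof of the conjecture but rather a correct diagnosis of what the paper does and where it stops. Your final paragraph is on target.

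For the Hilbert scheme case your sketch is essentially the paper's argument: write $\beta=\alpha^{[n]}$ via the isomorphism $\Br(S)\cong\Br(S^{[n]})$, take an Azumaya algebra $\ka$ on $S$ of rank $\per(\alpha)^2$ (de Jong), form $\ka\boxtimes\cdots\boxtimes\ka$ on $S^n$, and descend. One small correction: the paper does not descend to all of $S^{(n)}$ and then pull back. The ${\mathfrak S}_n$-action is free only off the big diagonal, so one descends to the open set $U=S^{[n]}\setminus E$ and uses that $\ind(\beta)$ can be computed on any dense open.

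Your Lagrangian sketch, however, glosses over exactly the points that occupy most of the paper and is too optimistic in two respects. First, the phrase ``the base contributes nothing (its Brauer group must be harmless)'' hides the main technical obstacle: the open base $B\subset\PP^n$ over which the fibration is smooth has highly non-trivial Brauer group, and the obstruction class $o(\alpha,L^r)\in H^2(B,\Gm)$ to gluing the line bundle $L^r$ on the twist $A_\alpha$ has no a priori reason to vanish. The paper spends \S\S\!~\ref{sec:compactify}--\ref{sec:VanishingOC} showing that this obstruction extends to a class on $\PP^n$ (where it does vanish), and this requires constructing a modular compactification of $\check A$ and controlling the line bundle on it. Second, and relatedly, the paper does \emph{not} obtain the unconditional bound $\ind(\alpha)\mid\per(\alpha)^n$ in the Lagrangian case: Theorem~\ref{thm1} carries a coprimality hypothesis $\gcd(\per(\alpha),N_X)=1$ with $N_X$ as in (\ref{eqn:NX}), coming from the degree of a Lagrangian multi-section, the fibre degree of a polarisation, torsion in $H^3$, and the index of a certain sub-lattice of the transcendental lattice. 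Your outline gives no mechanism to avoid these constraints. Also, your remark about ``the part of $\alpha_\eta$ surviving on the geometric fibre'' is moot here: because the fibres are Lagrangian, classes in $\Br(X)^0$ restrict trivially to them, so only the Weil--Ch\^atelet contribution is in play.
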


Note that for two-dimensional hyperk\"ahler manifolds, i.e.\ for K3 surfaces, the conjecture simply
says $\ind(\alpha)=\per(\alpha)$ which is known by de Jong's result \cite{dJ}. In fact, it is also true for non-projective K3 surfaces as proved in joint work with Schr\"oer \cite{HuySchr}. 
Recall that for surfaces the index is indeed realised by an Azumaya algebra on the whole surface.
This is a consequence of de Jong's proof \cite{dJ} or, in the case of K3 surface, of \cite{HuySchr}, but it also follows from results of Auslander and Goldman, cf.\ \cite{AnW}.\MD{See vdBergh's account of de Jong. Reflexive sheaves are locally free!}
\smallskip

The goal of this article is to provide evidence for Conjecture \ref{conj2} in higher dimensions. We essentially prove the conjecture for hyperk\"ahler manifolds that admit a Lagrangian fibration, 
which according to the SYZ conjecture form a countable and dense union of codimension one subvarieties in the moduli space of all (projective) hyperk\"ahler manifolds, cf.\  \cite[Prop.\ 7.1.3]{HuyK3}. Density has in fact been established for all known deformation types, see \cite{DIKM} for a survey and references.
\smallskip

The first main result of this article is the following:

\begin{thm}\label{thm1}
Let $X$ be a projective hyperk\"ahler manifold that admits a  Lagrangian fibration. Then there exists an integer $N_X$ such that
$$\ind(\alpha)\mid\per(\alpha)^{\dim(X)/2}$$
for all
$\alpha\in\Br(X)$ with $\per(\alpha)$ coprime to $N_X$.
\end{thm}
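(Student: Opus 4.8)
The plan is to descend the problem to the generic fibre of the Lagrangian fibration $\pi\colon X\to B$. Write $2n=\dim(X)$; by Matsushita's theorem $B$ is $n$-dimensional (conjecturally $B\cong\PP^n$, and in any case $\Br(B)=0$), and the generic fibre $X_\eta$ is, over the field $K\coloneqq k(B)$, a torsor $T$ under an abelian variety $A$ with $\dim(A)=n=\dim(X)/2$. Since $k(X)=k(X_\eta)$ and $X$ is smooth projective, bounding $\ind(\alpha)$ amounts to bounding the index of $\alpha|_{X_\eta}\in\Br(T)$, up to contributions supported over the discriminant of $\pi$ which I will collect, together with several auxiliary integers, into $N_X$. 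Thus the task becomes a period--index statement with exponent $n$ for a Brauer class on an $n$-dimensional torsor under an abelian variety over $K$.

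I would first remove the constant part of $\alpha|_{X_\eta}$. Choose a multisection $Y\subset X$ with $d\coloneqq\deg(\pi|_Y)$, and require $d\mid N_X$ (one may take $d=1$ when $\pi$ admits a rational section). Restricting to $Y$ identifies the $\Br(K)$-component of $\alpha|_{X_\eta}$ with a class pulled back from $\Br(B)=0$, and the transfer relation $\mathrm{cores}\circ\mathrm{res}=d$ then annihilates this component once $\per(\alpha)$ is coprime to $d$; the order of the torsor class $[T]\in H^1(K,A)$ is likewise placed into $N_X$, so that after these reductions we may assume $T=A$ is an abelian variety and that the residual class is governed entirely by the fibrewise geometry, i.e.\ by $\NS(A)$, the dual $\hat A$, and the Weil pairing on torsion.

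The heart of the argument is to turn such a fibrewise class of period $m=\per(\alpha)$ into a twisted sheaf of controlled rank by means of Mumford's theta groups. Let $H=\ko_X(1)|_{A}$ be the polarisation of type $(d_1,\dots,d_n)$ induced by the projective embedding, and set $D\coloneqq d_1\cdots d_n$, which I also require to divide $N_X$. I would realise $\alpha$ as the obstruction to descending the line bundle $L\coloneqq H^{\otimes m}$ along the relevant isogeny: its theta group $\kg(L)$ is a central extension of $\ker(\phi_L)$ by $\Gm$ with commutator the Weil pairing, and its unique irreducible Schr\"odinger representation has dimension $\sqrt{\#\ker(\phi_L)}=m^{n}D$. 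This representation assembles into an $\alpha$-twisted locally free sheaf, equivalently an Azumaya algebra, of rank (degree) $m^nD$ representing $\alpha$, whence $\ind(\alpha)\mid m^nD$. Because $\per(\alpha)$ and $\ind(\alpha)$ share the same prime divisors and $\gcd(\per(\alpha),D)=1$, the factor $D$ cannot occur in the index, and we conclude $\ind(\alpha)\mid m^n=\per(\alpha)^{\dim(X)/2}$.

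The main obstacle I expect lies in this theta-group step: one must identify the given class of period $m$ with the obstruction class of an explicit Heisenberg extension and verify that its irreducible representation has the clean dimension $m^nD$. This is exactly where coprimality is indispensable, since the dimension formula for the Schr\"odinger representation requires the induced central character to be primitive and the commutator pairing on $\ker(\phi_L)$ to be nondegenerate; the integer $N_X$ is designed to record precisely the primes---those dividing the polarisation type $D$, the multisection degree $d$, the order of $[T]$ in $H^1(K,A)$, and the bad fibres of $\pi$---for which these hypotheses may fail. A secondary, more technical point is the passage from a representative over the good locus of $\pi$ to a bound for $\ind(\alpha)$ on all of $X$; here I would extend the twisted sheaf as a reflexive twisted sheaf across the bad locus of $\pi$, again discarding the finitely many offending primes into $N_X$.
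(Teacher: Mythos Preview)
Your overall architecture is close to the paper's: both routes pass through the Heisenberg/theta-group extension
\[
0\to\Gm\to\underline{\Aut}((\check A,L^r)/B)[r]\to\underline{\check A}[r]\to 0
\]
attached to a relatively ample line bundle, and both ultimately produce an $\alpha$-twisted sheaf of rank $\per(\alpha)^n\cdot D$ on (an open of) $X$. But the step you flag as ``the main obstacle'' is a genuine gap, and your proposed resolution does not close it.

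The sentence ``I would realise $\alpha$ as the obstruction to descending $L=H^{\otimes m}$ along the relevant isogeny'' is exactly where the argument breaks. Concretely: the image of $\alpha$ in the Weil--Ch\^atelet group lifts to a class $\alpha'\in H^1(B,\underline{\check A}[r])$, and the obstruction to lifting $\alpha'$ further to the theta-group side---equivalently, to identifying $\alpha$ with a Heisenberg obstruction---is a class $o(\alpha,L^r)=\delta(\alpha')\in H^2(B,\Gm)$. Over the generic point this lies in $\Br(K)$ with $K=k(\PP^n)$, which is enormous; your appeal to ``$\Br(B)=0$'' conflates the compact base $\PP^n$ with its function field (or with the smooth locus $B\subset\PP^n$), and the corestriction argument does not touch $\delta(\alpha')$ at all. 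Putting finitely many primes into $N_X$ cannot kill an obstruction living in a group of this size. The paper's actual contribution here is to \emph{extend} the short exact sequence above from the smooth locus $B$ to all of $\PP^n$, by compactifying $\check A$ to a projective moduli space $\km\to\PP^n$ equipped with an ample line bundle of controlled fibre degree, and to show that $\alpha'$ itself extends to a class on $\PP^n$ (via an analysis of $T(X)\to H^0(\PP^n,R^2h_\ast\ZZ)$). Only then can one invoke $\Br(\PP^n)=0$ to conclude $o(\alpha,L^r)=0$.

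Two further points. First, your reduction ``put the order of $[T]\in H^1(K,A)$ into $N_X$ and assume $T=A$'' is not how the paper proceeds and is not obviously harmless: the torsor class of the generic fibre need not be finite order, and in any case the paper never trivialises it. Instead it uses a \emph{Lagrangian} (constant cycle) multi-section $\tilde\PP\subset X$, whose existence is a theorem of Lin, to guarantee $\alpha|_{\tilde\PP}=0$ for $\alpha\in\Br(X)^0$; this is what rigidifies the twisted Picard scheme and gives the Poincar\'e bundle needed for the Fourier--Mukai transfer. An arbitrary multisection will not trivialise $\alpha$, so your ``restricting to $Y$ identifies the $\Br(K)$-component with a class pulled back from $\Br(B)=0$'' is unjustified. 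Second, once the obstruction vanishes, the paper produces the twisted sheaf not via the Schr\"odinger representation directly, but by gluing $L^r$ to a line bundle $M_\alpha$ on the twist $A_\alpha=\Pic^0_\alpha(A/B)$ and then applying the Fourier--Mukai equivalence $\Db(A_\alpha)\cong\Db(A,\alpha)$; this is morally the same as your Schr\"odinger step, but it makes transparent why the resulting sheaf is $\alpha$-twisted rather than twisted by some other class.
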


Ideally, of course, one would like to prove $N_X=1$. Our methods do not currently
give this, but with some effort
we manage to produce a concrete $N_X$ that depends in an interesting way on the geometry
of $X$ and the assumed Lagrangian fibration $h\colon X\to \PP^n$. The answer will be of the form
\begin{equation}\label{eqn:NX}
N_X=\deg(\kl)\cdot\deg (\tilde\PP/\PP)\cdot |H^3(X,\ZZ)_{\text{ tors}}|\cdot \ind (T(X)_0),
\end{equation}
where $\deg(\kl)=\chi(X_t,\kl_t)$ is the minimal fibre degree of a relative polarisation $\kl$, $\deg(\tilde\PP/\PP)$ is the degree of a generically finite Lagrangian multi-section $\tilde\PP\subset X$ of the Lagrangian fibration, which always exist, and $\ind(T(X)_0)$ is the index of $T(X)_0\coloneqq\ker(T(X)\to H^0(\PP^n,R^2h_\ast\ZZ))$
viewed as a finite index sub-lattice of $H^2(X,\ZZ)/{\rm NS}(X)$. We refer to \S\! \ref{sec:proof} for a mored detailed discussion of $N_X$.\smallskip

We also point out that the weaker bound $\ind(\alpha)\mid\per(\alpha)^{\dim(X)}$ is
much easier to prove, see Remark \ref{rem:2gbis}. Moreover, it suffices to assume
that $\per(\alpha)$ is coprime to $\deg(\tilde \PP/\PP)$ and the order of $H^3(X,\ZZ)_{\text{ tors}}$.
\medskip

The second main result is concerned with Hilbert schemes of K3 surfaces, one of the principal
examples of hyperk\"ahler manifolds, and uses the identification of the Brauer group of a K3 surface $S$ and of its Hilbert scheme $S^{[n]}$ provided by Hodge theory. As the previous result,
the next theorem proves Conjecture \ref{conj2} for a dense set of all hyperk\"ahler manifolds
in this particular deformation class,  see \cite{MarkMeh}.

\begin{thm}\label{thm2}
Let $X=S^{[n]}$ be the Hilbert scheme of a K3 surface and $\alpha\in \Br(X)$. Then
$$\ind(\alpha)\mid\per(\alpha)^n.$$
\end{thm}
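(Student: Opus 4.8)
The plan is to deduce the Hilbert scheme case from the Lagrangian fibration result (Theorem~\ref{thm1}) by a deformation argument, using the fact that the bound $\ind(\alpha)\mid\per(\alpha)^{n}$ is a statement that is stable under deformations preserving the Brauer class in a suitable Hodge-theoretic sense. The key input is the Hodge-theoretic identification of $\Br(S)$ with $\Br(S^{[n]})$ for a K3 surface $S$, which matches the period $\per(\alpha)$ on the two sides and, crucially, lets us view a class on $X=S^{[n]}$ as coming from the K3 surface. This means that as $S$ varies in its moduli space, the pair $(S^{[n]},\alpha)$ varies in a family, and the period--index bound we want to prove is a property that holds on the whole deformation class as soon as it holds on a dense subset.

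The strategy then runs as follows. First I would make precise the transport of Brauer classes: given $\alpha\in\Br(X)$ with $X=S^{[n]}$, use the isomorphism $\Br(S)\cong\Br(S^{[n]})$ to realise $\alpha$ as the image of a class $\alpha_S\in\Br(S)$ with the same period, and note that the index can only drop under pullback-type constructions, so it suffices to control the index on $X$ for a well-chosen model. Second, I would invoke the density result of Markman--Mehrotra (cited as \cite{MarkMeh} in the excerpt) to the effect that Hilbert schemes admitting a Lagrangian fibration are dense in the moduli space of their deformation type; for such members, Theorem~\ref{thm1} gives $\ind(\alpha)\mid\per(\alpha)^{n}$ provided $\per(\alpha)$ is coprime to the integer $N_X$ of \eqref{eqn:NX}. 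Third, and this is where the main work lies, I would need to remove the coprimality restriction by a specialisation/deformation argument: along a family degenerating a general $S^{[n]}$ to one with a Lagrangian fibration whose $N_X$ is coprime to $\per(\alpha)$, the index is upper semicontinuous, so a bound at a special fibre propagates to control the general fibre.

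Concretely, I expect the argument to factor through the prime decomposition of $\per(\alpha)$. Since $\per(\alpha)$ and $\ind(\alpha)$ have the same prime factors, and divisibility $\ind(\alpha)\mid\per(\alpha)^{n}$ can be checked one prime at a time, I would fix a prime $\ell$ and arrange a Lagrangian-fibred deformation of $(S^{[n]},\alpha)$ for which the relevant factors of $N_X$ (the minimal fibre degree $\deg(\kl)$, the multi-section degree $\deg(\tilde\PP/\PP)$, the torsion order $|H^3(X,\ZZ)_{\text{tors}}|$, and $\ind(T(X)_0)$) are all coprime to $\ell$. For the Hilbert scheme this is plausible because $H^3(S^{[n]},\ZZ)$ is torsion-free and the transcendental lattice of a K3 surface can be controlled by choosing $S$, so each arithmetic obstruction in $N_X$ can be cleared prime by prime. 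The hard part will be the last point: ensuring that for each prime $\ell\mid\per(\alpha)$ there genuinely exists a deformation of $S^{[n]}$ carrying (a class Hodge-equivalent to) $\alpha$, admitting a Lagrangian fibration, and with $N_X$ coprime to $\ell$, all simultaneously. This is a compatibility problem between the Hodge-theoretic deformation of the Brauer class and the lattice-theoretic conditions guaranteeing both a Lagrangian fibration and the favourable shape of $N_X$, and it is precisely the interface where the global Torelli theorem for hyperk\"ahler manifolds and the explicit lattice computations on $S^{[n]}$ must be married.
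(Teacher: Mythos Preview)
Your proposal has a genuine gap: the deformation--specialisation step does not work as stated, and the paper in fact explicitly flags this obstacle (see Remark~\ref{rem:notfeas}). The assertion that ``the index is upper semicontinuous, so a bound at a special fibre propagates to control the general fibre'' is not justified. For a Brauer class $\alpha$ on the total space of a smooth proper family $\mathcal{X}\to T$, there is no general result guaranteeing that an Azumaya algebra of small rank on a special fibre deforms to nearby fibres; the obstructions live in higher cohomology of the algebra and need not vanish. Even the weaker statement that $\ind(\alpha_\eta)\mid\ind(\alpha_{t_0})$ for a closed point $t_0$ is not available without further input: an Azumaya algebra on $X_{t_0}$ realising $\ind(\alpha_{t_0})$ has no reason to extend to a neighbourhood, and conversely a maximal order extending an Azumaya algebra from the generic fibre need not stay Azumaya on the special fibre. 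So neither direction of the comparison you need is free. On top of this, you still owe the removal of the $N_X$-coprimality from Theorem~\ref{thm1}; your prime-by-prime sketch presumes one can simultaneously arrange a deformation carrying the given Brauer class, a Lagrangian fibration, and favourable values of all four factors in~\eqref{eqn:NX}, but you give no mechanism for this.

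The paper's proof is entirely different and far more direct; it does not use Theorem~\ref{thm1} at all. The key observation is that the isomorphism $\Br(S)\cong\Br(S^{[n]})$ is realised \emph{concretely} by $\alpha\mapsto\alpha^{[n]}\coloneqq\pi^\ast\alpha^{(n)}$, where $\alpha^{(n)}$ is the symmetrised exterior product on $S^{(n)}$. One then invokes de Jong's theorem $\ind=\per$ on the K3 surface $S$ to represent $\alpha$ by an Azumaya algebra $\ka$ of rank $\per(\alpha)^2$, forms the ${\mathfrak S}_n$-linearised exterior product $\ka\boxtimes\cdots\boxtimes\ka$ on $S^n$, and descends it to an Azumaya algebra of rank $\per(\alpha)^{2n}$ on the open complement of the exceptional divisor in $S^{[n]}$. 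This immediately gives $\ind(\beta)\mid\per(\beta)^n$ for every $\beta\in\Br(S^{[n]})$, with no coprimality hypothesis. You had the right first move---transport the class to $S$---but the decisive point is to exploit the explicit multiplicative form of that transport rather than to deform.
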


The techniques we will use to prove Theorem \ref{thm2} can also be used to prove Theorem \ref{thm1} for certain Lagrangian fibred hyperk\"ahler manifolds of ${\rm K3}^{[n]}$-type without any assumption on $\per(\alpha)$, so with $N_X=1$, cf.\ \S\! \ref{sec:ModuliK3curves} \& \S\! \ref{sec:HilbLag}. It would be interesting to generalise Theorem \ref{thm2} to arbitrary smooth projective moduli spaces of sheaves on K3 surfaces. \smallskip

\EV{Note that at this point  Conjecture \ref{conj2} has not been proved for all hyperk\"ahler manifolds of any given deformation type. Also it remains unclear if it is reasonable to expect similar results for arbitrary
ramified Brauer classes $\alpha\in\Br(K(X))$, see \S\! \ref{sec:ramBr}.}

\medskip

\noindent
{\bf Conventions and background:}  A projective hyperk\"ahler manifold $X$ is a smooth complex
projective variety that is simply connected and for which $H^0(X,\Omega^2_X)$ is spanned
by an everywhere non-degenerate two-form. They are to abelian varieties what K3 surfaces are to abelian surfaces. They are also sometimes called projective irreducible
holomorphic symplectic varieties, see \cite{GHJ} for definitions and basic facts.
A Lagrangian fibration of a compact hyperk\"ahler manifold is a connected morphism $h\colon X\to \PP$ 
onto a normal variety $\PP$ of dimension $\dim(\PP)=\dim(X)/2$. If $\PP$ is smooth, which is equivalent to $h$ being flat, then $\PP\cong\PP^n$, cf.\ \cite{BS,Hwang,LiTo}.
\smallskip

Only two series of deformations types are known: Deformations of the Hilbert scheme $S^{[n]}$ of a
K3 surface $S$ and deformations of generalised Kummer varieties $K_n(A)$ associated with an abelian surface $A$. In addition, there are two sporadic types, OG6 and OG10, in dimension six and ten \cite{OG1,OG2}.
In recent years, singular examples have been constructed, typically by taking quotients, and it seems reasonable to expect Conjecture \ref{conj2} to hold for those as well.
\smallskip

Brauer groups of schemes have been introduced by Grothendieck and his three papers on the subject \cite{Brauer} remain highly influential and a standard reference. A modern account is provided by the recent monograph \cite{CTS} by Colliot-Th\'el\`ene and Skorobogatov.\smallskip
\medskip

\noindent
{\bf Main techniques:}
Theorem \ref{thm1} is proved by exploiting twisted Picard varieties as considered in \cite{HuyMa}
and techniques going back to Ogg \cite{Ogg} and \v{S}afarevi\v{c} \cite{Sha} for elliptic curves, 
generalised by Bhatt \cite{KLB} and Clark \cite{Clark} to  abelian varieties. Theorem \ref{thm2} 
is reduced to the result for surfaces, cf.\ \cite{HuySchr,dJ}, by means of exterior products of Azumaya algebra.\smallskip

It is tempting to approach the period-index problem for arbitrary
projective hyperk\"ahler manifolds by applying deformation theory,
starting with the results for the two dense sets covered by the 
techniques in this paper, cf.\ Remark \ref{rem:notfeas}.\smallskip

Some of the techniques developed in this paper also apply to more general situations.
All of \S\! \ref{sec:PIAbsch} is concerned with general abelian fibrations, \S\! \ref{sec:compactify}
applies to modular compactifications of dual fibrations quite broadly, and \S\! \ref{sec:Improv}
outlines an argument how to improve the exponent $2g(C)$ in \cite{HuyMa} to just $g(C)$
for arbitrary projective varieties.

\medskip

\noindent
{\bf Acknowledgements:} Early versions of the results in this paper have been presented
at a workshop in Cetraro and in the S\'eminaire de G\'eom\'etrie Alg\'ebrique
 in Paris. I wish to thank the participants, especially Olivier Benoist, Bruno Kahn,
 and Claire Voisin, for useful comments and challenging questions. I am particularly
 grateful to Dominique Mattei for the collaboration leading to \cite{HuyMa,HuyMa2} and
 his comments on the first version of the present paper. Thanks also to James Hotchkiss
 for pointing out the reference \cite{CTGab}.

\section{Period-index for abelian schemes}\label{sec:PIAbsch}

Parts of the discussion in this section is a geometric remake of the more arithmetic ones by Bhatt  \cite{KLB},
 Clark \cite{Clark}, and the classical ones by Ogg \cite{Ogg} and \v{S}afarevi\v{c} \cite{Sha}.
Our slightly more geometric point of view will be crucial for the applications to hyperk\"ahler varieties in the next section.
\smallskip

\subsection{Twisting the dual abelian scheme} In the following, $B$ is a smooth and quasi-projective variety over an algebraically closed field $k$ of characteristic zero.\TBC{Do we need this a this point? No!} 

Assume $f\colon A\to B$ is an abelian scheme of relative dimension $g$ and denote by
$$\check f\colon\check{A}\coloneqq\Pic^0(A/B)\to B$$  its dual.  In most of the discussion below it is enough to assume that 
the geometric fibres $A_t$ of $f$ are isomorphic to abelian varieties, only the abelian scheme structure
of $\check A\to B$ is of real importance. The zero section of $A$ is solely used for the existence of various Poincar\'e bundles
and we will point this out explicitly when the time comes.\smallskip

With this situation, the following \'etale sheaves are naturally associated:
\begin{enumerate}\setlength\itemsep{0.3em}
\item[(i)] The sheaf $\underline{\check{A}}$  of sections of $\check{f}$;
\item[(ii)] The sheaf $\underline{\Aut}(\check{A}/B)$ of relative automorphisms of the smooth projective morphism $\check{f}\colon \check{A}\to B$  together with 
its identity component $\underline{\Aut}^0(\check{A}/B)\subset \underline{\Aut}(\check{A}/B)$;
\item[(iii)] The sheaf $R^1f_\ast\Gm$ together with its identity component $(R^1f_\ast\Gm)^0\subset R^1f_\ast\Gm$.
\end{enumerate}
\smallskip
Then there are natural isomorphisms
\begin{equation}\label{eqn:WC}
(R^1f_\ast\Gm)^0\cong\underline{\check{A}}\cong \underline{\Aut}^0(\check{A}/B).
\end{equation}
In particular, the Weil--Ch\^atelet group $H^1(B,\underline{\check{A}})$ of the abelian scheme $\check{A}\to B$ is identified
with $H^1(B,(R^1f_\ast\Gm)^0)$.
\smallskip

As soon as $g>1$, the sheaf $R^2f_\ast\Gm$ is not trivial and so  the restriction map $$\Br(A)\cong H^2(A,\Gm)\to H^0(B, R^2f_\ast\Gm),$$ which is $E^2\to E_2^{0,2}$ in the Leray spectral sequence, may not be trivial. We denote its kernel by
$$\Br_1(A)\coloneqq \ker\left(\Br(A)\cong H^2(A,\Gm)\to H^0(B, R^2f_\ast\Gm)\right).$$
It can also be seen as the subgroup of all geometrically trivial Brauer classes on the generic fibre, i.e.\  $\Br_1(A)=\ker(\Br(A)\,\hookrightarrow\Br(A_\eta)\to \Br(A_{\bar\eta}))$.

From the Leray spectral sequence we obtain a natural map 
$$\Br_1(A)\to H^1(B, R^1f_\ast\Gm)$$ and we denote by $\Br_1(A)^0$ the pre-image of
$H^1(B, (R^1f_\ast\Gm)^0)$ which then comes with a natural map 
$\Br_1(A)^0\to H^1(B, (R^1f_\ast\Gm)^0)$ to the Weil--Ch\^atelet group 
of the abelian scheme $\check A \to B$. Note for later use that for the definition
of $\Br_1(A)^0$ we do not need to assume that $A\to B$ has a section.\smallskip

Thus, with any class $\alpha\in\Br_1(A)^0$,
there is naturally associated a twist  $A_\alpha\to B$ of $\check{A}\to B$:
$$\alpha \in \Br_1(A)^0 \leadsto A_\alpha\to B.$$
To make this more explicit, realise the image of $\alpha$ in the Weil--Ch\^atelet group as a cocycle $$\{\alpha_{ij}\}\in
H^1(B,(R^1f_\ast\Gm)^0)\cong H^1(B, \underline{\Aut}^0(\check{A}/B))$$ with respect to a certain \'etale covering $\bigcup U_i\to B$. Then the open subsets $\check{A}_{U_{ij}}\coloneq\check{f}^{-1}(U_{ij})$ are
glued via the isomorphims $\alpha_{ij}\colon \check{A}_{U_{ij}}\cong\check{A}_{U_{ji}}$ to form a scheme $A_\alpha\to B$, cf.\ \cite[VIII.\ Cor.\ 7.7]{SGA1}.\smallskip

Here are a few standard facts:\smallskip

\begin{enumerate}\setlength\itemsep{0.3em}
\item[(o)] The fibration $A_\alpha\to B$ is a torsor for the abelian scheme $\check{A}\to B$.
\item[(i)] The isomorphism type of $A_\alpha$ together with the fibration $A_\alpha\to B$  only depends on the
image of $\alpha$ in the Weil--Ch\^atelet group $H^1(B,(R^1f_\ast\Gm)^0)$.
\item[(ii)] The torsor $A_\alpha$ is trivial, i.e.\  $\check{A}\cong A_\alpha$ over $B$, if $\alpha$ is trivial. Also, $A_\alpha$ is trivial if and only if the projection $A_\alpha\to B$ has a section.
\item[(iii)] The construction is compatible with the group structure of the Weil--Ch\^atelet group,
i.e.\ for two classes $\alpha_1$ and $\alpha_2$ there exists a morphism
$A_{\alpha_1}\times_B A_{\alpha_2}\to A_{\alpha_1\alpha_2}$ compatible with the actions of
$\check{A}\to B$.
\end{enumerate}

\begin{remark}
Note that for $g=1$, i.e.\  $A\to B$ is an elliptic fibration (a genus one fibration without a section is enough), one has $$\Br_1(A)^0=\Br_1(A)=\Br(A).$$ 
Indeed, $R^2f_\ast\Gm$ is trivial and $(R^1f_\ast\Gm)^0 \subset
 R^1f_\ast\Gm$ admits a split.
 \end{remark}

\subsection{Multi-sections}\label{sec:Multi}
If now $\alpha\in \Br_1(A)^0$ is of  finite order, say $r$, then we get a morphism from the $r$-fold fibre product
to $\check{A}$:
\begin{equation}\label{eqn:group1}
A_\alpha\times_B\cdots\times_BA_\alpha\to A_{\alpha^r}\cong \check{A},
\end{equation}
which on each fibre $ (A_{\alpha})_t\cong \check{A}_t$ is  described (up to translation) by the group structure on the abelian variety $\check{A}_t$. In particular, the composition of (\ref{eqn:group1}) with the diagonal embedding
$A_\alpha\,\hookrightarrow A_\alpha\times_B\cdots\times_BA_\alpha$ defines an \'etale morphism
$$\varphi\colon A_\alpha\twoheadrightarrow \check{A}$$
of degree $r^{2g}$. Taking the pre-image of the zero section $B\subset \check{A}$ of $\check f\colon \check A\to B$ then leads to 
a multi-section 
$$\varphi^{-1}(B)\subset A_\alpha\to B$$ of degree $r^{2g}$.\footnote{The idea to consider the
pre-image in this context seems a standard trick in the area that was first brought to my attention independently by B.\ Antieau, A.\ Auel, and M.\ Lieblich.}

\begin{remark}
Another way to phrase the discussion so far goes as follows, see also the discussion further below
for similar arguments: If
$\alpha\in H^1(B,(R^1f_\ast\Gm)^0)=H^1(B,\underline{\check A})$ is of order $r$, then it 
comes from a class in $H^1(B,R^1f_\ast\mu_r)=H^1(B,\underline{\check A}[r])$ and is, therefore,
represented by a cocyle of relative automorphisms acting by translation by $r$-torsion
points. Since translation by $r$-torsion points preserves the set of $r$-torsion points in the fibres,
this produces a  multi-section of $A_\alpha\to B$ of degree $r^{2g}$. 

If one would like to argue in a similar fashion to produce multi-sections of smaller degree, then
a relative cycle of $\check A\to B$ 
 would be needed that is preserved by some cocycle representing $\alpha$. There is no obvious choice of such a relative cycle even in the case of an elliptic surface.
\end{remark}

So, at this point in the discussion, the techniques seem unlikely to produce any multi-section of smaller degree.
However, if $A\to B$ is an elliptic surface $S\to C$, i.e.\ $g=1$ and $\dim(B)=1$, then already  Ogg \cite{Ogg}, \v{S}afarevi\v{c} \cite{Sha} and later again Lichtenbaum \cite{Li} observed that a multi-section of smaller degree $r$ instead of $r^2$ does exist. More precisely, they established the following theorem. A proof of it will be sketched  below.

\begin{thm}[Ogg, \v{S}afarevi\v{c}]\label{thm:OS}
Assume $S\to C$ is a smooth elliptic fibration over a quasi-projective curve $C$. Then the $\alpha$-twist  $S_\alpha\to C$ of the dual  elliptic fibration $\check{S}\to C$ associated with a class $\alpha\in\Br(S)$ of order $r$ admits a multi-section of degree $r$.
\end{thm}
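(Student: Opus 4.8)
The plan is to pass to the generic fibre of $S\to C$ and reduce everything to a period-index statement for a single genus-one curve over the function field $K=k(C)$. Write $\eta=\Spec K$ for the generic point and set $E\coloneqq \check{S}_\eta$ and $Y\coloneqq (S_\alpha)_\eta$. Then $Y$ is a smooth genus-one curve over $K$ which, as a torsor under its Jacobian $E=\Pic^0(Y)$, represents the image of $\alpha$ in the Weil--Ch\^atelet group $H^1(K,E)$; since $S\to C$ is smooth, the torsor $S_\alpha$ is the unique smooth proper model of its generic fibre, so restriction to $\eta$ preserves the order and this class still has order $r$. As giving a degree-$d$ multi-section of $S_\alpha\to C$ amounts to giving an effective divisor of degree $d$ on $Y$ (spread out and take the schematic closure in $S_\alpha$), it suffices to produce such a divisor over $K$ with $d=r$.

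First I would identify $Y$ with $\Pic^1_{Y/K}$ through the Abel--Jacobi map sending $y$ to $[\ko_Y(y)]$, an $E$-equivariant isomorphism of torsors that requires no rational point on $Y$. Forming tensor products of line bundles then shows that the $E$-torsor $\Pic^d_{Y/K}$ has class $d\cdot[Y]\in H^1(K,E)$. As $[Y]$ has order $r$, the torsor $\Pic^r_{Y/K}$ is trivial and therefore carries a $K$-rational point $\xi$.

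The crux is to upgrade $\xi$, a priori only a Galois-invariant divisor class, to a genuine line bundle on $Y$. The obstruction lies in the low-degree exact sequence
$$0\longrightarrow \Pic(Y)\longrightarrow \Pic_{Y/K}(K)\xrightarrow{\,\partial\,}\Br(K)$$
coming from the Leray spectral sequence for $\Gm$ along $Y\to\eta$ (using $H^1(K,\Gm)=0$). This is exactly where the hypothesis enters decisively: because $K=k(C)$ is the function field of a curve over an algebraically closed field, Tsen's theorem gives $\Br(K)=0$. Hence $\partial=0$ and $\xi$ lifts to an actual line bundle $L\in\Pic^r(Y)$ of degree $r$.

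Finally, I would make $L$ effective. On the genus-one curve $Y$ one has $\omega_Y\cong\ko_Y$, so Riemann--Roch gives $h^0(L)=\deg(L)=r\ge 1$, because $h^1(L)=h^0(L^{-1})=0$ for $\deg(L)>0$. A nonzero section of $L$ cuts out an effective divisor $D\subset Y$ of degree $r$ over $K$, whose closure in $S_\alpha$ is the sought-for multi-section of degree $r$. The main obstacle is the lifting step: over a general field a rational point of $\Pic^r_{Y/K}$ need not come from a line bundle, and the resulting class in $\Br(K)$ is precisely what can force the index to exceed the period for genus-one curves. The entire improvement from the naive bound $r^2$ — the degree of the $r$-torsion multi-section when $g=1$ — down to $r$ rests on the vanishing $\Br(k(C))=0$, which is special to $\dim C=1$.
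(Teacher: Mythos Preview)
Your argument is correct and is essentially Lichtenbaum's arithmetic approach: work at the generic point $K=k(C)$, use that $\Pic^d_{Y/K}$ represents $d\cdot[Y]$ in $H^1(K,E)$, kill the obstruction to lifting a rational point of $\Pic^r_{Y/K}$ to a genuine line bundle via Tsen's theorem $\Br(K)=0$, and take a section. One small remark: you only need that the order of $[Y]$ in $H^1(K,E)$ \emph{divides} $r$, which is immediate, so that $\Pic^r_{Y/K}$ is trivial; the equality of orders you assert is true but requires the injectivity of the passage from $\Br(S)$ to the Weil--Ch\^atelet group, which you do not really justify and do not need.

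The paper takes a genuinely different, more global route. Instead of restricting to the generic fibre, it fixes the line bundle $L=\ko(C)$ on $\check S$ attached to the zero section and asks whether $L^r$ can be glued to a line bundle $M_\alpha$ on the twist $S_\alpha$. Lifting $\alpha$ first to $H^1(C,\underline{\check S}[r])$ and then through the relative theta (Heisenberg) extension
\[
0\to\Gm\to\underline{\Aut}((\check S,L^r)/C)[r]\to\underline{\check S}[r]\to 0
\]
produces an obstruction in $H^2(C,\Gm)=\Br(C)$, which vanishes by Tsen. The resulting $M_\alpha$ has fibre degree $r$, and a section (after twisting from the base) gives the multi-section. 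So both proofs hinge on Tsen, but yours invokes $\Br(k(C))=0$ while the paper invokes $\Br(C)=0$.

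What each buys: your approach is shorter and self-contained for this statement. The paper's approach, however, is the one that carries the weight later: the theta-group gluing generalises verbatim to abelian schemes of relative dimension $g>1$ (Proposition~\ref{prop:lbabscheme}), yields a global line bundle on $A_\alpha$ rather than merely a divisor on the generic fibre, and feeds directly into the Fourier--Mukai machinery used to bound $\ind(\alpha)$. The generic-fibre argument does not extend so cleanly, because for $g>1$ the torsor $\Pic^d_{Y/K}$ is no longer governed by a single integer and the obstruction lives in $\Br$ of a higher-dimensional base, which is exactly the difficulty \S\ref{sec:VanishingOC} is devoted to.
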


For a smooth elliptic fibration $S\to C$  (over a curve), producing small degree multi-sections of a twist
$S_\alpha\to C$ is essentially equivalent to producing line bundles of small fibre degree on $S_\alpha$.
However, for $g>1$, so abelian schemes $A\to B$ of higher relative dimensions, this is no longer
the case and, in fact, for the application to the period-index problem the line bundle approach
gives better results, cf.\ \S\! \ref{sec:ApplPI} for more details. As an illustration, we will briefly
review the line bundle approach first in the case of elliptic surfaces.


\subsection{Gluing line bundles on twists of elliptic surfaces}\label{sec:GluingCurve}
Assume $S\to C$ is a smooth elliptic fibration over a quasi-projective curve and let $L\coloneqq\ko(C)\in \Pic(\check{S})$ be  the line bundle associated with the zero section  $C\subset\check{S}$ of the dual elliptic fibration $\check{S}\to C$. We claim that  for any class $\alpha\in \Br(S)$ of order $r$ the associated element in the Weil--Ch\^atelet group can be represented by a cocycle that also glues $L^r$.

The short exact sequence 
$$\xymatrix{0\ar[r]&\check{S}[r]\ar[r]&\check{S}\ar[r]^-r&\check{S}\ar[r]& 0}$$of group schemes over $C$
induces a long exact sequence
$$\xymatrix@R=5pt{\cdots\ar[r]&H^1(C,\underline{\check{S}}[r])\ar[r]&H^1(C,\underline{\check{S}})\ar[r]^-r&H^1(C,\underline{\check{S}})\ar[r]&\cdots.\\
&\alpha'\ar@{|->}[r]&\alpha&&}$$
Since $r$ is the order of $\alpha$, we can lift $\alpha$ to a class  $\alpha'\in H^1(C,\underline{\check{S}}[r])$,
which can be interpreted as a torsor for the finite group scheme $\check{S}[r]\to C$ of $r$-torsion points.
\smallskip

Let us first look at the case of a single elliptic curve $E$. 
Clearly, for a line bundle $L$ on the elliptic curve $E$, one has $t_a^\ast L^r\cong L^r$ for every $r$-torsion point $a\in E$, i.e.\ $E[r]\subset K(L^r)$, where $K(L^r)$ is the kernel of the homomorphism $\varphi_{L^r}\colon E\to \check{E}$, $a\mapsto t_a^\ast L^r\otimes L^{-r}$.

The theta group $\tilde K(L^r)$ of $L^r$, cf.\ \cite[\S\! 1]{MumfordTheta}, is the extension 
\begin{equation}\label{eqn:theta}
\xymatrix{0\ar[r]&k^\ast\ar[r]&{\tilde K(L^r)}\ar[r]&K(L^r)\ar[r]&0}
\end{equation}
of all automorphisms of $L^r$ over $E$ acting by translation on $E$. Restricting to the subgroup $E[r]=K(L^r)[r]\subset K(L^r)$
gives 
\begin{equation}\label{eqn:theta2}
\xymatrix{0\ar[r]&k^\ast\ar[r]&\Aut(E,L^r)[r]\ar[r]&E[r]\ar[r]&0,}
\end{equation}
where $\Aut(E,L^r)[r]\subset \Aut(E,L^r)$ denotes the group of automorphisms of $(E,L^r)$  acting by translation by
$r$-torsion points on $E$. 
\smallskip

 The relative version of (\ref{eqn:theta2}) for $\check{S}\to C$ is the short exact sequence
 of sheaves
 \begin{equation}\label{eqn:Heisenberg}
\xymatrix{0\ar[r]&\Gm\ar[r]&\underline{\Aut}((\check{S},L^r)/C)[r]\ar[r]&\underline{\check{S}}[r]\ar[r]&0}
\end{equation}
with the induced exact cohomology sequence
$$\xymatrix@R=5pt{H^1(C, \underline{\Aut}((\check{S},L^r)/C)[r])\ar[r]&H^1(C,\underline{\check{S}}[r])\ar[r]^-\delta&H^2(C,\Gm).\\
\tilde\alpha\ar@{|->}[r]&\alpha'&}$$
The class $\alpha'\in H^1(C,\underline{\check{S}}[r])$ can be lifted further
to a class $\tilde\alpha\in  H^1(C, \underline{\Aut}((\check{S},L^r)/C)[r])$, for $\delta(\alpha')\in\Br(C)\cong H^2(C,\Gm)=0$ by Tsen's theorem.
\smallskip

The upshot of this discussion is that the twist $S_\alpha\to C$ of the dual fibration
$\check{S}\to C$ (which is of course isomorphic to the original one $S\to C$) comes with a line bundle $M_\alpha\in \Pic(S_\alpha)$ of  degree $r$ on the fibres. After twisting by a sufficiently positive line bundle on $C$ we may assume that $M_\alpha$ has a non-trivial global section which eventually produces a multi-section of degree $r$ on
$S_\alpha\to C$. This concludes the proof of the  Ogg--\v{S}afarevi\v{c} Theorem \ref{thm:OS}.


\subsection{Gluing line bundles on twists of abelian schemes}\label{sec:glueAS}
As in the classical Ogg--\v{S}afarevi\v{c} theory, it is not evident how to find multi-sections
of $A_\alpha\to B$ of degree smaller than $r^{2g}$. Inspired by the discussion in the case
of elliptic surfaces, we will glue line bundles instead. Very similar arguments have been presented
by Abasheva \cite[\S ~3.0.3]{Abash}.
\smallskip

So, let $f\colon A\to B$ be an abelian scheme of relative dimension $g$. For what follows in this
section, we only need that the geometric fibres of $f$ are isomorphic to abelian varieties, the existence of a zero section of $f$
is not needed.

Assume  $L$ is a line bundle on the dual fibration $\check{A}\to B$, later it will be assumed 
relatively ample or at least of non-zero fibre degree, and let  $\alpha\in H^1(B,(R^1f_\ast\Gm)^0)$ be of order $r$. In the proof of the next proposition,
we will revisit the construction of a natural obstruction class, cf.\
\cite{Clark,KLB,Ogg,Sha} $$o(\alpha,L^r)\in H^2(B,\Gm),$$ 
for which the following proposition holds.

\begin{prop}\label{prop:lbabscheme}
If $o(\alpha,L^r)\in H^2(B,\Gm)$ is trivial, then there exists
a line bundle $M_\alpha$ on $A_\alpha$ satisfying fibrewise the equality
$\chi(A_{\alpha t},M_{\alpha t})=r^g\cdot\chi(A_t,L_t)$.
\end{prop}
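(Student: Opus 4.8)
The plan is to carry the theta-group gluing of \S\ref{sec:GluingCurve} over from elliptic surfaces to abelian schemes of relative dimension $g$, with the class $o(\alpha,L^r)$ measuring exactly the one step that is automatic over a curve (by Tsen) but not over a higher-dimensional base $B$. First I would lift $\alpha$ through multiplication by $r$ on $\underline{\check A}$: the short exact sequence of \'etale sheaves
\[
0\longrightarrow \underline{\check A}[r]\longrightarrow \underline{\check A}\xrightarrow{r}\underline{\check A}\longrightarrow 0
\]
yields, since $r\cdot\alpha=0$, a class $\alpha'\in H^1(B,\underline{\check A}[r])$ lifting $\alpha$. As in the elliptic case, $\alpha'$ is a torsor under the finite group scheme $\check A[r]$ over $B$, so the cocycle gluing $A_\alpha$ out of the pieces $\check A_{U_{ij}}$ may be taken to act fibrewise by translation by $r$-torsion points.

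The key fibrewise input is the inclusion $\check A_t[r]\subset K(L^r)$, where $K(L^r)=\ker\bigl(\varphi_{L^r}\colon \check A_t\longrightarrow(\check A_t)^\vee\bigr)$. This holds because $\varphi_{L^r}=r\cdot\varphi_L$ is a homomorphism, so $\varphi_{L^r}(a)=\varphi_L(ra)=0$ for every $a\in\check A_t[r]$; equivalently $t_a^\ast L^r\cong L^r$ for such $a$, which is what lets the translations above be lifted to $L^r$. Restricting the theta group (\ref{eqn:theta}) of $L^r$ to $\check A_t[r]$ and sheafifying over $B$ produces, exactly as in (\ref{eqn:Heisenberg}), a central extension of \'etale sheaves
\[
0\longrightarrow \Gm\longrightarrow \underline{\Aut}\bigl((\check A,L^r)/B\bigr)[r]\longrightarrow \underline{\check A}[r]\longrightarrow 0,
\]
a Heisenberg-type sheaf whose non-abelian connecting map $\delta\colon H^1(B,\underline{\check A}[r])\longrightarrow H^2(B,\Gm)$ I would use to set $o(\alpha,L^r)\coloneqq\delta(\alpha')$.

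Granting $o(\alpha,L^r)=0$, the class $\alpha'$ lifts to some $\tilde\alpha\in H^1\bigl(B,\underline{\Aut}((\check A,L^r)/B)[r]\bigr)$. I would then read $\tilde\alpha$ as gluing data for the pairs $(\check A_{U_{ij}},L^r)$ rather than for the $\check A_{U_{ij}}$ alone: since $\tilde\alpha$ lifts the translation cocycle $\alpha'$ to genuine automorphisms of $L^r$, the local copies of $L^r$ glue to a line bundle $M_\alpha$ on the twist $A_\alpha$. On each fibre this identifies $M_{\alpha t}$ with a translate of $L_t^{\otimes r}$, hence $M_{\alpha t}\cong L_t^{\otimes r}$ because $\check A_t[r]\subset K(L^r)$. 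The fibrewise Euler characteristic then drops out of the formula $\chi=(c_1)^g/g!$ on an abelian variety, giving $\chi(A_{\alpha t},M_{\alpha t})=\chi(\check A_t,L_t^{\otimes r})=r^g\cdot\chi(\check A_t,L_t)$, as asserted.

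The main obstacle is the honest construction and well-definedness of the obstruction $o(\alpha,L^r)$, and with it the verification that $\tilde\alpha$ is a genuine cocycle rather than a mere collection of local lifts, so that $M_\alpha$ really descends to a line bundle on $A_\alpha$. This is the non-abelian lifting problem through the central extension above, and it is exactly the place where the commutator (Weil) pairing on $\check A[r]$ and the extension class of the Heisenberg sheaf intervene; one also has to track the dependence of $\delta(\alpha')$ on the auxiliary lift $\alpha'$, the ambiguity living in $\delta\bigl(H^1(B,\underline{\check A})[r]\bigr)$. Over a curve all of this is invisible because $H^2(C,\Gm)=0$ by Tsen's theorem, which is precisely the structural reason that in higher dimension the vanishing $o(\alpha,L^r)=0$ must be imposed as a hypothesis.
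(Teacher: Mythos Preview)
Your proposal is correct and follows essentially the same approach as the paper's own proof: lift $\alpha$ to $\alpha'\in H^1(B,\underline{\check A}[r])$, use $\check A[r]\subset K(L^r)$ to set up the theta-group extension (\ref{eqn:sesAV}), define $o(\alpha,L^r)=\delta(\alpha')$, and when it vanishes glue $L^r$ to $M_\alpha$ on $A_\alpha$. Your write-up is in fact slightly more explicit than the paper's in a couple of places---the identity $\varphi_{L^r}=r\cdot\varphi_L$ and the Riemann--Roch computation $\chi(L_t^{\otimes r})=r^g\chi(L_t)$---and your final paragraph flags the dependence of $\delta(\alpha')$ on the choice of lift $\alpha'$, a point the paper leaves as a side remark.
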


\begin{proof}
This really is a copy of the arguments in \S\!  \ref{sec:GluingCurve}. 

Firstly, one lifts $\alpha\in H^1(B,\underline{\check{A}})$ to a class $\alpha'$ in $H^1(B,\underline{\check{A}}[r])$, which only needs that the order of $\alpha$ divides $r$. Clearly, the order of the lifted class $\alpha'$ in $H^1(B,\underline{\check{A}}[r])$ again divides $r$.

Secondly, one would like to lift $\alpha'$ further under $$
\xymatrix{H^1(B,\underline\Aut((\check A,L^r)/B)[r])\ar[r]& H^1(B,\underline{\check A}[r])\,,}$$
but this is obstructed by the image of $\alpha'$ under the boundary map\TBC{Think this through. It seems $\alpha'$ may not be unique and then $\delta(\alpha')$ is not?}
\begin{equation}\label{eqn:boundary}
\xymatrix{\delta\colon H^1(B,\underline{\check A}[r])\ar[r]& H^2(B,\Gm)}
\end{equation}
of the short exact sequence, the analogue of (\ref{eqn:Heisenberg}):
\begin{equation}\label{eqn:sesAV}
\xymatrix{0\ar[r]&\Gm\ar[r]&\underline\Aut((\check A,L^r)/B)[r]\ar[r]&\underline{\check A}[r]\ar[r]&0\,.}
\end{equation}
Here, we are using again that $L^r$ is fixed by translation by $r$-torsion points of $\check A$, i.e.\ the group scheme of $r$-torsion points
$\check A[r]$ is contained in the kernel of the homomorphism $\varphi_{L^r}\colon \check A\to  A$.\smallskip

By construction, the obstruction class $o(\alpha,L^r)\coloneqq\delta(\alpha')$, 
has the property that if it vanishes, then the restrictions of $L^r$ to the open subsets $\check{A}_{U_{ij}}$
can be glued  to a line bundle $M_\alpha$ on $A_\alpha$. As the restriction of $M_\alpha$ to each
fibre is then isomorphic to the restriction of $L^r$, one obtains the claimed equality.
\end{proof}

\begin{remark}
If we think of ${\rm c}_1(L)^g$ as a relative cycle on $\check{A}\to B$ of dimension zero and degree $\ell$, then the proposition can be read as saying that, once $o(\alpha,L^r)$ vanishes, $A_\alpha\to B$ admits the relative cycle ${\rm c}_1(M_\alpha)^g$ of degree $r^g\cdot \ell$. 
Note, however, that $\chi(L)=\ell/g!$ so that this approach usually leads to $\ind(\alpha)\mid g!\cdot\per(\alpha)^g$, which introduces further coprimality restrictions on $\per(\alpha)$. Also note that, unlike the case of elliptic surfaces, it is a little trickier to actually produce a multi-section of this degree, without assuming $L$ to have some  further positivity properties.
\end{remark}

\subsection{Application to the period-index problem}\label{sec:ApplPI}
We will now link the discussion so far to the period-index problem
and as a warm up we start again with the case of a smooth elliptic fibration $S\to C$ over a curve.\smallskip

For $\alpha\in\Br(S)$, the twist $S_\alpha$ can be reinterpreted purely in terms of $\alpha\in\Br(S)$ as a
certain moduli space $\Pic^0_\alpha(S/C)$ of $\alpha$-twisted sheaves on $S$ concentrated on the fibres
of $S\to C$, cf.\ \cite[Prop.\ 3.5]{HuyMa2}. To be more precise, the restriction of the class $\alpha\in H^1(C,R^1f_\ast\Gm)$ defines a torsor $\Pic_\alpha(S/C)\to C$ for the group scheme $\Pic(S/C)=\bigsqcup \Pic^d(S/C)\to C$. This torsor $\Pic_\alpha(S/C)$ is also the disjoint union of countably many components: 
 $\Pic_\alpha(S/C)=\bigsqcup\Pic_\alpha^i(S/C)$ with each component being a $\Pic^0(S/C)$-torsor. 
 Moreover, with the right indexing, tensor product defines a morphism $\Pic^1(S/C)\times \Pic^i_\alpha(S/C)\to \Pic^{i+1}_\alpha(S/C)$. As
$S\to C$ has a section, all $\Pic^0(S/C)$-torsors $\Pic^d(S/C)$ are isomorphic and hence
all $\Pic^i_\alpha(S/C)$ are isomorphic. See \cite{HuyMa2} for a detailed discussion how the components
$\Pic^i_\alpha(S/C)$ can be effectively enumerated.
In any case, for our discussion, the ambiguity in the choice of $i$ can be safely ignored and we will simply write $\Pic^0_\alpha(S/C)$ for one fixed component.
\smallskip

The Brauer class $\alpha\in \Br(S)$ restricted to the zero section $C\subset S$ of $f\colon S\to C$  is trivial for dimension reasons.
This rigidifies the twisted Picard functor and, as a consequence, $\Pic^0_\alpha(C)$ is a fine moduli space,
i.e.\ there exists a Poincar\'e bundle $$\kp_\alpha\to S_\alpha\times_CS,$$
which by definition is $(1,\alpha)$-twisted. Here, $(1,\alpha)\in \Br(S_\alpha\times_CS)$ is obtained by pulling-back $\alpha$ from the second factor. Note that $\kp_\alpha$ is unique only up to tensoring with line bundles in $S_\alpha$ and we can and will assume that via $\kp_\alpha$ the surface $S$ parametrises line bundles of degree zero on the fibres of $S_\alpha$ inducing an isomorphism $S\cong\Pic^0(S_\alpha/C)$.

The universal property of the Poincar\'e bundle then implies that taken as the kernel
of a Fourier--Mukai functor, $\kp_\alpha$ induces an equivalence $$\Phi_{\kp_\alpha}\colon \Db(S_\alpha)\congpf \Db(S,\alpha).$$
In particular,  if, as in \S\! \ref{sec:GluingCurve}, $M_\alpha\in\Pic(S_\alpha)$ denotes the line bundle that is obtained from gluing $L^r$ on $\check S$, where $L=\ko(C)\in \Pic(\check{S})$ is associated
with the zero section $C\subset \check{S}$, then $\Phi_{\kp_\alpha}(M_\alpha)={\rm pr}_{2\ast}(\kp_\alpha\otimes{\rm pr}_{1}^\ast M_\alpha)$ is an $\alpha$-twisted locally free sheaf of rank $r=\chi(S_{\alpha t},M_{\alpha t})$ on $S$.

Alternatively, we can choose a global section of $M_\alpha$ and use its zero set $Z_\alpha\subset S_\alpha$,
which is a multi-section of degree $r$. Then, the restriction of $\kp_\alpha$ to $Z_\alpha\times_CS$ can be 
viewed more directly as an $\alpha$-twisted locally free sheaf  of rank $r$ on $S$. This proves the following
classical result.

\begin{cor}[Ogg, \v{S}afarevi\v{c}]
For any $\alpha\in \Br(S)$, one has $\ind(\alpha)=\per(\alpha)$.\qed
\end{cor}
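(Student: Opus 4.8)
The plan is to exhibit an $\alpha$-twisted locally free sheaf of rank $r\coloneqq\per(\alpha)$ on the \emph{whole} surface $S$. Its sheaf of endomorphisms is then an (untwisted) Azumaya algebra of degree $r$ representing $\alpha$, so that $\ind(\alpha)\mid r=\per(\alpha)$; combined with the general divisibility $\per(\alpha)\mid\ind(\alpha)$ recalled at the outset, this forces $\ind(\alpha)=\per(\alpha)$. Thus the whole task reduces to producing one such sheaf.

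First I would record the line bundle supplied by the gluing construction of \S\ref{sec:GluingCurve}. Over the curve $C$ the relevant obstruction $o(\alpha,L^r)$ lives in $\Br(C)\cong H^2(C,\Gm)=0$ by Tsen's theorem, so Proposition \ref{prop:lbabscheme} in the case $g=1$ applies and yields a genuine line bundle $M_\alpha\in\Pic(S_\alpha)$ whose restriction to every fibre $S_{\alpha t}$ has degree $r$. On a genus-one fibre this means $\chi(S_{\alpha t},M_{\alpha t})=\deg(M_{\alpha t})=r>0$.

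Second, I would transport $M_\alpha$ to $S$ through the twisted Poincar\'e bundle. The Fourier--Mukai kernel $\kp_\alpha$ on $S_\alpha\times_CS$ is $(1,\alpha)$-twisted and induces the equivalence $\Phi_{\kp_\alpha}\colon\Db(S_\alpha)\congpf\Db(S,\alpha)$. I would then show that $\Phi_{\kp_\alpha}(M_\alpha)={\rm pr}_{2\ast}(\kp_\alpha\otimes{\rm pr}_1^\ast M_\alpha)$ is a single $\alpha$-twisted sheaf, placed in cohomological degree zero, and locally free of rank $r$. The key point is that, since $S$ parametrises degree-zero line bundles on the fibres of $S_\alpha$, the restriction of $\kp_\alpha$ along any fibre $S_{\alpha t}$ has degree zero; tensoring with $M_{\alpha t}$ therefore leaves the fibre degree equal to $r>0$, and on a genus-one curve a line bundle of positive degree has vanishing $H^1$ and $h^0=r$. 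Cohomology-and-base-change then gives both the vanishing of the higher direct images and the local freeness of ${\rm pr}_{2\ast}$, with rank the constant fibre Euler characteristic $r$.

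The main obstacle is precisely this last local-freeness step: one must verify the fibrewise $H^1$-vanishing for the \emph{twisted} kernel rather than for $M_\alpha$ alone, and confirm that the $\alpha$-twisting is carried correctly through the pushforward so that the output is an honest $\alpha$-twisted bundle and not merely a complex. A clean alternative that sidesteps the base-change bookkeeping is the multi-section route: after twisting $M_\alpha$ by a sufficiently positive bundle pulled back from $C$ one obtains a nonzero global section, whose zero locus $Z_\alpha\subset S_\alpha$ is a degree-$r$ multi-section of $S_\alpha\to C$; restricting $\kp_\alpha$ to $Z_\alpha\times_CS$ and noting that $Z_\alpha\to C$ is finite of degree $r$ then exhibits directly an $\alpha$-twisted locally free sheaf of rank $r$ on $S$, yielding the same conclusion $\ind(\alpha)\mid r$.
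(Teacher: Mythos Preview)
Your proposal is correct and follows essentially the same route as the paper: you construct the line bundle $M_\alpha$ of fibre degree $r$ on $S_\alpha$ via the gluing of \S\ref{sec:GluingCurve} (using Tsen to kill the obstruction), push it through the twisted Fourier--Mukai equivalence $\Phi_{\kp_\alpha}$ to obtain an $\alpha$-twisted locally free sheaf of rank $r$, and you also offer exactly the same multi-section alternative via $Z_\alpha\subset S_\alpha$ that the paper gives. The only extra detail you supply is the explicit cohomology-and-base-change justification for local freeness, which the paper leaves implicit.
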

Observe that the discussion also shows that  $S_\alpha\to C$ cannot have a multi-section of order strictly
smaller than the order of $\alpha$.

\medskip

Let us now pass to abelian schemes. As for the case of elliptic surfaces, the twist $A_\alpha$ for $\alpha\in \Br_1(A)^0$ can be viewed as a component  $\Pic_\alpha^0(A/B)\to B$  of the
twisted Picard variety of $\alpha$-twisted line bundles on the fibres of $A\to B$.

As in the classical theory of Picard schemes \cite{BLR}, in order to ensure that $\Pic_\alpha^0(A/B)$ is a fine moduli space, one needs to rigidify the situation by assuming that
the restriction $\alpha|_{B}\in H^2(B,\Gm)$ of $\alpha$ to the zero section $B\subset A$ of $f\colon A\to B$ is trivial. 
 In this case, the standard Fourier--Mukai formalism adapted to the twisted case, cf.\ \cite{KLB},\TBC{Check reference.} with
 the $(1,\alpha)$-twisted Poincar\'e line bundle $$\kp_\alpha\to A_\alpha\times_B A,$$
 taken as a Fourier--Mukai kernel defines an equivalence
$$\Phi_{\kp_\alpha}\colon \Db(A_\alpha)\congpf  \Db(A,\alpha).$$

\begin{cor}\label{cor:piab}
Let $f\colon A\to B$ be an abelian scheme of relative dimension
$g$ and let $\alpha\in \Br_1(A)^0$ be a Brauer class with trivial restriction
to the zero section $B\subset A$. Furthermore, assume that $L$ is
a line bundle on $\check A$ of non-zero fibre degree, i.e.\ $\chi(\check A_t,L_t)\ne0$.

Then, if the period $r=\per(\alpha)$
is coprime to $\chi(\check A_t,L_{t})$ and the obstruction class $o(\alpha,L^r)\in H^2(B,\Gm)$ is trivial, one has
\begin{equation}\label{eqn:indper1}
\ind(\alpha)\mid\per(\alpha)^g.
\end{equation}
In particular, if $L$ is a principal polarisation, then {\rm (\ref{eqn:indper1})} holds for all $\alpha\in \Br_1(A)^0$ with vanishing obstruction.
\end{cor}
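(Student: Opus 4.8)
The plan is to run exactly the argument sketched for elliptic surfaces in \S\ref{sec:ApplPI}, feeding the line bundle supplied by Proposition \ref{prop:lbabscheme} into the twisted Fourier--Mukai equivalence and reading off an $\alpha$-twisted locally free sheaf whose rank I can control. Since the obstruction $o(\alpha,L^r)\in H^2(B,\Gm)$ is assumed to vanish, Proposition \ref{prop:lbabscheme} produces a line bundle $M_\alpha$ on $A_\alpha$ with $M_{\alpha t}\cong L^r_t$ on every fibre, so that $\chi(A_{\alpha t},M_{\alpha t})=r^g\cdot\chi(\check A_t,L_t)$. Because $L$ has non-zero fibre degree, each $M_{\alpha t}$ is a nondegenerate line bundle on the abelian variety $A_{\alpha t}\cong\check A_t$; its Mukai index $i(M_{\alpha t})=i(L_t)$ is independent of $t$ over the connected base $B$, the signs of the eigenvalues of the associated Hermitian form being locally constant in a family.

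Next I would apply the equivalence $\Phi_{\kp_\alpha}\colon\Db(A_\alpha)\congpf\Db(A,\alpha)$---available precisely because the rigidifying hypothesis $\alpha|_B$ trivial guarantees a $(1,\alpha)$-twisted Poincar\'e bundle $\kp_\alpha$---to the object $M_\alpha$. Fibrewise, $\Phi_{\kp_\alpha}$ specialises to the classical Mukai transform on $\check A_t$, which on a nondegenerate line bundle is concentrated in the single degree $i(M_{\alpha t})$ and is locally free there of rank $|\chi(A_{\alpha t},M_{\alpha t})|$. As this degree is constant along $B$, cohomology and base change show that $\Phi_{\kp_\alpha}(M_\alpha)$ is, after a shift, an honest $\alpha$-twisted locally free sheaf $E$ on $A$ with
$$\rk(E)=|\chi(A_{\alpha t},M_{\alpha t})|=r^g\cdot|\chi(\check A_t,L_t)|.$$

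From $E$ one forms the Azumaya algebra $\kend(E)$ of degree $\rk(E)$ with Brauer class $\alpha$, so that $\ind(\alpha)\mid r^g\cdot|\chi(\check A_t,L_t)|$. To pass to the sharper divisibility I would invoke the standard fact that $\per(\alpha)=r$ and $\ind(\alpha)$ have the same prime factors: thus every prime dividing $\ind(\alpha)$ divides $r$ and hence not $\chi(\check A_t,L_t)$, so $\ind(\alpha)$ is coprime to $|\chi(\check A_t,L_t)|$; dividing $r^g\cdot|\chi(\check A_t,L_t)|$, it must then already divide $r^g=\per(\alpha)^g$. The concluding assertion is immediate, for a principal polarisation has $\chi(\check A_t,L_t)=1$, so the coprimality requirement is vacuous and (\ref{eqn:indper1}) holds for every $\alpha\in\Br_1(A)^0$ with vanishing obstruction.

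The step demanding the most care is the identification of $\Phi_{\kp_\alpha}(M_\alpha)$ with a locally free $\alpha$-twisted sheaf of the stated rank: one must transport Mukai's single-degree concentration and local-freeness from a single abelian variety to the relative, twisted situation over $B$, which rests on the constancy of the index $i(M_{\alpha t})$ and on base change for the twisted push-forward. The number-theoretic cancellation and the appeal to Proposition \ref{prop:lbabscheme} are, by contrast, routine.
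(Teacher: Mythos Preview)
Your argument is correct and follows essentially the same route as the paper: apply Proposition~\ref{prop:lbabscheme} to produce $M_\alpha$, push it through $\Phi_{\kp_\alpha}$, read off $\ind(\alpha)\mid r^g\cdot\chi(\check A_t,L_t)$, and finish with the shared-prime-factors observation. The only difference is that the paper is content to treat $\Phi_{\kp_\alpha}(M_\alpha)$ as a complex of $\alpha$-twisted sheaves whose total (alternating) rank equals $\chi(A_{\alpha t},M_{\alpha t})$, deferring the locally free refinement to Remark~\ref{rem:onproof}; you instead invoke constancy of the Mukai index to obtain a single shifted locally free sheaf directly, which is a valid sharpening but not needed for the divisibility statement.
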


\begin{proof}
According to Proposition \ref{prop:lbabscheme}, there exists a line bundle $M_\alpha$ on $A_\alpha$ the fibre degree of which is 
$\chi(A_{\alpha t}, M_{\alpha t})=\per(\alpha)^g\cdot  \chi(\check A_t,L_t)$. Since $\Phi_\kp(M_\alpha)$ is a complex of $\alpha$-twisted sheaves on $A$ of total rank $\chi(A_{\alpha t}, {M_{\alpha t}})$, we can conclude $$\ind(\alpha)\mid\per(\alpha)^g\cdot  \chi(\check A_t,L_{t}).$$ Since $\ind(\alpha)$ and $\per(\alpha)$ have the same prime factors, this implies the assertion.
\end{proof}

Of course, this result is only useful if the vanishing of the obstruction class $o(\alpha,L^r)$ can
be effectively checked. For example, if the Brauer group $\Br(B)$ of $B$ is trivial, then
all obstructions are trivial. However, in the applications, $B$ is typically not projective
and compactified only by a codimension one  boundary. This makes the vanishing of $\Br(B)$
extremely unlikely as soon as $\dim(B)>1$.

\begin{remark}\label{rem:GeomAppl}
In the geometric application later, we start with a polarisation $\kl$ of the abelian scheme $A\to B$ and so we would like to rephrase the condition on $\per(\alpha)$ in terms of $\kl$ on $A$ and not in terms of $L$ on $\check A$. For this we briefly show how to construct a line bundle $L$ on $\check A$ such that $\chi(\check A_t,L_t)$ and $\chi(A_t,\kl_t)$ have the same prime factors. \smallskip
 
 There are in fact 
various ways to associate a line bundle  $L$ on $\check{A}$ with a given one $\kl$ on $A$. The quickest, which gives the smallest possible degree, is to define
$L\coloneqq \check{\kl~}$ as the dual of the determinant of the image of  $\kl$ under the Fourier--Mukai equivalence
$\Phi_\kp\colon\Db(A)\congpf \Db(\check A)$ given by the classical Poincar\'e line bundle $\kp$, i.e.$$L\coloneqq \det(\Phi_\kp(\kl))^\ast.$$ It is known that if $\kl$ is relatively ample, then also $L$ is. But even without assuming ampleness, one knows
$\chi(\check A_t, L_{t})= \chi(A_t,\kl_{t})^{g-1}$, which follows immediately from
a result of Mukai \cite[Prop.\ 3.11]{Mukai}, see also the discussion in \S\! \ref{sec:compactify} and \cite{Bi}. 

In the end we will not work with this specific $L$ but with a closely related
line bundle $M$ obtained from an ample line bundle on a modular compactification of $\check A$.
It will have the property that the three fibre degrees $\chi(\check A_t,M_t)$, $\chi(\check A_t,L_t)$, and $\chi(A_t,\kl_t)$ have the same prime factors which is all that matters.
\end{remark}

\begin{remark}\label{rem:onproof}
If $\kl$ and hence $L=\check{\kl~}$ are relatively ample, then also the line bundle $M_\alpha$ on $A_\alpha$
is relatively ample. In this case,
$\Phi_\kp(M_\alpha)$ is actually a locally free $\alpha$-twisted sheaf  of rank $\chi(A_{\alpha t},M_{\alpha t})$
and not merely a complex. \smallskip

Alternatively, as in \cite{HuyMa}, one could argue by taking global sections of the restriction of $\kp$ to $A_\alpha\times_B\eta_A$. This results in a twisted sheaf over the generic point $\eta\in A$
and $\ind(\alpha)=\ind(\alpha_\eta)$.
\end{remark}

\begin{remark}\label{rem:2g}
The weaker assertion $$\ind(\alpha)\mid\per(\alpha)^{2g}$$
always holds and is much easier to prove. Indeed, as has been mentioned in \S\! \ref{sec:Multi}, there always exists a cycle
$Z\subset A_\alpha$ of relative length $\per(\alpha)^{2g}$. The restriction $\kp_\alpha|_{ Z\times_BA}$
can the be viewed as an $\alpha$-twisted locally free sheaf on $A$, which implies the assertion.
\end{remark}

\begin{remark} The whole discussion so far could have been done in the analytic category. In this case,
the cohomological Brauer group $H^2(B,\ko^\ast)$ is typically much easier to control. For example,
by shrinking $B$ to an affine scheme, one can assume that $H^2(B,\ko)=0$, so that via the
exponential sequence $H^2(B,\ko^\ast)$ is contained in the finitely generated group $H^3(B,\ZZ)$.
Then, for $\per(\alpha)$ coprime to the order of the torsion of $H^3(B,\ZZ)$, the obstruction
$o(\alpha,L^r)$ is trivial. However, this would eventually only produce a line bundle $M_\alpha$ on $A_\alpha$
in the analytic category and via the analytic Fourier--Mukai equivalence
$\Db(A_\alpha)\congpf D^b(A,\alpha)$ an $\alpha$-twisted locally free sheaf on $A$ in the analytic category.
This would not be enough to draw any conclusion concerning the index in the \'etale setting.
\end{remark}
\subsection{Fibrations without a section}\label{sec:gone}
Let us indicate how to adapt the preceding discussion to the case of smooth fibrations
$f\colon A\to B$ with geometric fibres isomorphic to abelian varieties but without a global zero section.
The existence of a section of $A\to B$ was used twice to rigidify certain Picard schemes: 
\begin{enumerate}\setlength\itemsep{0.3em}
\item[(i)] For the existence of the classical Poincar\'e bundle $\kp$ on $\check A\times_B A$, which in turn was used to transform a line bundle $\kl$ on $A$ into a line bundle $L$ on $\check A$. 
\item[(ii)] For the existence of the twisted Poincar\'e bundle $\kp_\alpha$ on $A_\alpha\times_BA$, which also needed the restriction of $\alpha$ to the section
to be trivial. The latter was then used for the equivalence $\Phi_{\kp_\alpha}\colon \Db(A_\alpha)\congpf\Db(A,\alpha)$.
\end{enumerate}
\smallskip

In \S\! \ref{sec:HK}, it will become clear  that for (i)
the Poincar\'e bundle is not really needed. After all,  as a quasi-projective
variety, $\check A$ certainly admits an ample line bundle, we only need to control its degree. 
This approach will be pursued in \S\! \ref{sec:compactify}. There, the dual
abelian scheme $\check A$ will be viewed as an open subset of a moduli space of stable sheaves
and the line bundle naturally extends to a compactification. The extension will be crucial
to control the obstruction class.

However, (ii) is more serious, as in the application to hyperk\"ahler manifolds
one really has to face cases where $A\to B$ comes without a section. The solution will
be provided by Lagrangian multi-sections. Incidentally, 
the arguments addressing (ii) will also take care of (i), see Remark \ref{rem:LBnoSec} below.
\smallskip


Assuming projectivity of the morphism $f\colon A\to B$, there always exists a multi-section, i.e.\ an integral subscheme $\tilde B\subset A$ which is generically finite over $B$. Let us fix one and denote by $k$ its degree. We may think of $\tilde B$ as the closure of a certain closed point of degree $k$ in the  scheme-theoretic generic fibre $A_\eta$.
The base change
$$\tilde A\coloneqq A\times_B\tilde B\to \tilde B$$ is then an abelian scheme of the form considered before. Its zero section is given by the diagonal morphism $\tilde B\to A\times_B\tilde B$.
In particular, there exists a Poincar\'e bundle $\kp$ on  the base change $(A\times_B\check A)\times_B\tilde B$
inducing a Fourier--Mukai equivalence $\Phi_\kp\colon \Db(A\times_B\tilde B)\congpf\Db({\check A}\times_B\tilde B)$
relative over $B$.

Let us now add a Brauer class $\alpha\in \Br_1(A)^0$ and denote by $\tilde\alpha\in\Br(\tilde A)$ its pull-back under the projection 
$ \tilde A=A\times_B\tilde B\to A$, which  is then also contained
in the subgroup $\Br_1(\tilde A)^0$. In order to combine the above with the discussion in \S\! \ref{sec:ApplPI}, 
we need to assume that $\tilde\alpha$ is trivial along the zero section of $\tilde A=A\times_B\tilde B\to \tilde B$
or, equivalently, that the restriction $\alpha|_{\tilde B}$ to the multi-section $\tilde B\subset A$ is trivial.
In the application to Lagrangian fibrations of hyperk\"ahler varieties, we will work with one such $\tilde B$ that can be chosen independently of the 
Brauer class $\alpha$. 

\begin{cor}\label{cor:nosec}
Let $A\to B$ be of relative dimension $g$ with a multi-section $\tilde B\subset A$ of degree $k$.
Furthermore, let  $L$ be a line bundle on $\check A$ with non-zero fibre degree, i.e.\ 
$\chi(\check A_t,L_t)\ne0$.

Assume  $\alpha\in \Br_1(A)^0$ is a class with period $r=\per(\alpha)$  coprime to $
k\cdot\chi(\check A_t,L_t)$
and such that its restriction $\alpha|_{\tilde B}\in H^2(\tilde B,\Gm)$ and its obstruction class $o(\alpha,L^r)\in H^2(B,\Gm)$ are both trivial. Then
\begin{equation}\label{eqn:indper}
\ind(\alpha)\mid\per(\alpha)^g.
\end{equation}
\end{cor}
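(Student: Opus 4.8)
The plan is to reduce the statement to the situation of Corollary \ref{cor:piab} by base change along the multi-section, and then to transport the resulting twisted sheaf from $\tilde A$ back down to $A$, absorbing the unavoidable factor $k$ by coprimality at the very end. Since the index depends only on the generic point, $\ind(\alpha)=\ind(\alpha_\eta)$ as recalled in Remark \ref{rem:onproof}, I would first pass to the generic fibre, where $\pi\colon\tilde A\coloneqq A\times_B\tilde B\to A$ becomes finite of degree $k$. By the discussion in \S\ref{sec:gone}, $\tilde A\to\tilde B$ is an abelian scheme with zero section the diagonal $\tilde B\to A\times_B\tilde B$, and the pull-back $\tilde\alpha\in\Br_1(\tilde A)^0$ of $\alpha$ is trivial along this section exactly because $\alpha|_{\tilde B}$ is assumed trivial. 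Hence over $\tilde B$ the obstacles (i) and (ii) of \S\ref{sec:gone} disappear: the $(1,\tilde\alpha)$-twisted Poincar\'e bundle $\kp_{\tilde\alpha}$ on $\tilde A_{\tilde\alpha}\times_{\tilde B}\tilde A$ exists and yields a Fourier--Mukai equivalence $\Phi_{\kp_{\tilde\alpha}}\colon\Db(\tilde A_{\tilde\alpha})\congpf\Db(\tilde A,\tilde\alpha)$.

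Next I would feed a line bundle into this transform. The gluing construction of Proposition \ref{prop:lbabscheme} uses only the vanishing of $o(\alpha,L^r)\in H^2(B,\Gm)$ and the abelian scheme structure of $\check A\to B$, but no section of $A\to B$; it therefore applies over $B$ itself and produces a line bundle $M_\alpha$ on $A_\alpha$ of fibre degree $\chi(A_{\alpha t},M_{\alpha t})=r^g\cdot\chi(\check A_t,L_t)$. Base-changing along $\tilde B\to B$ and using that twisting commutes with base change, so that $\tilde A_{\tilde\alpha}\cong A_\alpha\times_B\tilde B$, gives a line bundle $M_{\tilde\alpha}$ of the same fibre degree. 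Applying $\Phi_{\kp_{\tilde\alpha}}$ then produces a complex of $\tilde\alpha$-twisted sheaves on $\tilde A$ whose total rank equals $r^g\cdot\chi(\check A_t,L_t)$, a genuine locally free $\tilde\alpha$-twisted sheaf when $L$ is relatively ample, cf.\ Remark \ref{rem:onproof}, although only the total rank matters for the index bound.

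It then remains to descend from $\tilde A$ to $A$. Pushing this complex forward along the finite degree-$k$ morphism $\pi\colon\tilde A\to A$ gives, since $\tilde\alpha=\pi^\ast\alpha$, a complex of $\alpha$-twisted sheaves on $A$ of total rank $k\cdot r^g\cdot\chi(\check A_t,L_t)$; equivalently one invokes the standard estimate $\ind(\alpha)\mid[k(\tilde A):k(A)]\cdot\ind(\tilde\alpha)=k\cdot\ind(\tilde\alpha)$ for the index under a degree-$k$ field extension. Either way $\ind(\alpha)\mid k\cdot r^g\cdot\chi(\check A_t,L_t)$. Since $\ind(\alpha)$ and $\per(\alpha)=r$ have the same prime factors and $r$ is coprime to $k\cdot\chi(\check A_t,L_t)$, for every prime $p\mid\ind(\alpha)$ one has $p\mid r$, hence $p\nmid k\cdot\chi(\check A_t,L_t)$ and therefore $v_p(\ind(\alpha))\le v_p(r^g)$; collecting over all primes yields $\ind(\alpha)\mid r^g=\per(\alpha)^g$.

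The hard part is this final descent: one must be certain that trading the missing section of $A\to B$ for the base change to $\tilde B$ inflates the rank only by the multiplicative factor $k=\deg(\tilde B/B)$, and that this $k$, together with the fibre degree $\chi(\check A_t,L_t)$ coming from $M_\alpha$, is genuinely coprime to $\per(\alpha)$, so that the coprimality step strips away both at once. A secondary point to verify is the compatibility of the twist $A_\alpha$ and of the glued line bundle $M_\alpha$ with base change along the generically finite $\tilde B\to B$, which is precisely why I would work over the generic fibre, where $\tilde B\to B$ is finite; this also circumvents any failure of flatness of the multi-section away from the generic point.
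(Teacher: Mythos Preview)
Your proof is correct and follows essentially the same route as the paper: base change along the multi-section $\tilde B\to B$ to acquire a zero section, apply the gluing of Proposition \ref{prop:lbabscheme} and the Fourier--Mukai transform (i.e.\ Corollary \ref{cor:piab}) to $\tilde\alpha$, push the resulting $\tilde\alpha$-twisted complex forward along $\tilde A\to A$, and strip off the extraneous factor $k\cdot\chi(\check A_t,L_t)$ by coprimality. The only cosmetic differences are that the paper pulls back the obstruction class to $\tilde B$ and checks its vanishing there (rather than gluing over $B$ and then base changing as you do), and that the paper handles the generic-finiteness of $\tilde B\to B$ by noting the pushforward is locally free over an open subset rather than by passing to the generic point---both harmless variants of the same argument.
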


\begin{proof} Before we start, recall that the definition of $o(\alpha,L^r)\in H^2(B,\Gm)$ in \S\! \ref{sec:glueAS} did not assume the existence of a zero section of $A\to B$.
To simplify the discussion, we will assume that $L$ is relatively ample, but see the end of the proof
for comments on the general case.\smallskip

If $r=\per(\alpha)$ is coprime to $\chi(\check A_t,L_t)$, then also the period $\per(\tilde\alpha)$ of the
pull-back $\tilde\alpha$ is.
Hence, applying Corollary \ref{cor:piab} to $\tilde\alpha$ while
using (\ref{eqn:LLk}), one finds $\ind(\tilde\alpha)\mid\per(\tilde\alpha)^g$,
assuming that  $\per(\alpha)$ is coprime to $\chi(\check A_t,L_t)$ and that $o(\tilde\alpha,\pi^\ast L^r)$ is trivial.  More precisely, in this case there exists an $\tilde\alpha$-twisted locally free sheaf on $\tilde A$ of rank $\per(\alpha)^g$, cf.\ Remark \ref{rem:onproof}.

Taking its direct image under the projection $\tilde A\to A$, we obtain an $\alpha$-twisted sheaf on
$A$ of rank $k\cdot\per(\alpha)^g$. As $\tilde B\to B$ is only generically finite, this sheaf may only be locally free on some open subset of $A$ but this is enough to conclude $\ind(\alpha)\mid k\cdot\per(\alpha)^g$, which 
in turn proves the assertion.

To conclude, observe that the assumption $o(\alpha,L^r)\in H^2(B,\Gm)$ being trivial implies
that also its pull-back $o(\tilde\alpha,\pi^\ast L^r)\in H^2(\tilde B,\Gm)$ is trivial,
where $\pi\colon \check A\times_B\tilde B\to \check A$ is the projection.\smallskip

If $L$ is not relatively ample and just satisfies $\chi(\check A_t,L_t)\ne0$, then instead of an $\tilde\alpha$-twisted locally free sheaf on $\tilde A$ one obtains a complex of $\tilde\alpha$-twisted sheaves on $\tilde A$ and the direct image on $A$ will again be just a complex.
However, the conclusion is the same, for the index is defined as a gcd. 
\end{proof}

In the application to Lagrangian fibrations of hyperk\"ahler manifolds the line bundle $L$ will
be obtained as the restriction of an ample line bundle on a modular compactification of $\check A$.  However, building upon Remark \ref{rem:GeomAppl}, there is an alternative approach to produce
a convenient line bundle that avoids compactifying $A$ or $\check A$.
As this might be of independent interest, we shall record in the following remark.

\begin{remark}\label{rem:LBnoSec}
As above, we assume $\tilde B\subset A$ is a multi-section of $A\to B$ of degree $k$.
We claim that for any line bundle  $\kl$ on $A$,  there exists a line bundle $L$ on $\check A$ such that
$$\chi(\check A_t,L_{t})=k^g\cdot\chi(A_t,\kl_{t})^{g-1}.$$

Observe that with this line bundle $L$  the assumption in Corollary \ref{cor:nosec} that $r=\per(\alpha)$ be coprime to $k\cdot\chi(\check A_t,L_t)$
is  equivalent to $r$ being coprime to $k\cdot \chi(A_t,\kl_t)$.
The latter only involves the original fibration $A\to B$.
\smallskip

The proof of this assertion goes as follows: Let $\tilde{\kl~~}\!\!$ be the pull-back of $\kl$ under $\tilde A\to A$ and define
$\tilde L\coloneqq \det(\Phi_\kp(\tilde{\kl~~}\!\!))^\ast\in\Pic(\check A\times_B\tilde B)$.
Observe that for a closed point $\tilde t\in \tilde B$ mapping to $t\in B$, one has $(A\times_B\tilde B)_{\tilde t}\cong A_t$, $(\check A\times_B\tilde B)_{\tilde t}\cong \check A_t$, and $\chi(\check A_t, \tilde L_{t})= \chi(A_t,\kl_{t})^{g-1}$.

If  we now let $L\coloneqq\det(\pi_\ast\Phi_\kp(\tilde{\kl~~}\!\!))^\ast$, where $\pi\colon \check A\times_B\tilde B\to \check A$ is the projection, then
\begin{equation}\label{eqn:LLk}
L_{t}\cong \tilde L_{\tilde t}^k\text{ and }\chi(\check A_t, L_{t})= k^g\cdot \chi(A_t,\kl_{t})^{g-1},
\end{equation} which concludes the argument.

\end{remark}

\EV{
\begin{lem}
If $\kl$ is a line bundle on $A$, then there exists a line bundle $L$ on $\check A$ such that
$$\chi(\check A_t,L_{t})=k^g\cdot\chi(A_t,\kl_{t})^{g-1}.$$
\end{lem}

\begin{proof} Let $\tilde{\kl~~}\!\!$ be the pull-back of $\kl$ under $\tilde A\to A$ and
$\tilde L\coloneqq \det(\Phi_\kp(\tilde{\kl~~}\!\!))^\ast\in\Pic(\check A\times_B\tilde B)$.
Then, as before,  if a closed point $\tilde t\in \tilde B$ maps to $t\in B$, then $(A\times_B\tilde B)_{\tilde t}\cong A_t$, $(\check A\times_B\tilde B)_{\tilde t}\cong \check A_t$ and $\chi(\check A_t, \tilde L_{t})= \chi(A_t,\kl_{t})^{g-1}$.

If  we now let $L\coloneqq\det(\pi_\ast\Phi_\kp(\tilde{\kl~~}\!\!))^\ast$, where $\pi\colon \check A\times_B\tilde B\to \check A$ is the projection, then
\begin{equation}\label{eqn:LLk}
L_{t}\cong \tilde L_{t}^k\text{ and }\chi(\check A_t, L_{t})= k^g\cdot \chi(A_t,\kl_{t})^{g-1},
\end{equation} which proves the assertion.
\end{proof}

Note that for relative dimension one, i.e.\ $g=1$, the dual family $\check S\to B$ always admits a principal polarisation
via the zero section. The same phenomenon will be used in \S\! \ref{sec:Jac} for families of curves of higher genus and
the associated Jacobian family.\smallskip

Let us now add a Brauer class $\alpha\in \Br_1(A)^0$. Its pull-back $\tilde\alpha$ under the projection 
$A\times_B\tilde B\to A$ is then also contained
in the subgroup $\Br_1(A\times_B\tilde B)^0$. In order to combine the above with the discussion in \S\! \ref{sec:ApplPI}, 
we need to assume that $\tilde\alpha$ is trivial along the zero section of $A\times_B\tilde B\to \tilde B$
or, equivalently, that the restriction $\alpha|_{\tilde B}$ to the multi-section $\tilde B\subset A$ is trivial.
In the application to Lagrangian fibrations of hyperk\"ahler varieties, we will work with one such $\tilde B$ that works for all
Brauer classes on $X$. In particular, we can work with one multi-section not depending on $\alpha$
and, therefore, with one line bundle $L$ on $\check A$.

\begin{cor}\label{cor:nosec}
Let $A\to B$ be of relative dimension $g$ with a multi-section $\tilde B\subset A$ of degree $k$.
Furthermore, let $\kl$ be a relatively ample line bundle  on $A$ and  $L$ the associated line bundle on $\check A$ constructed as above.

Assume  $\alpha\in \Br_1(A)^0$ is a class with period $r=\per(\alpha)$  coprime to $k\cdot \chi(A_t,\kl_{A_t})$
and such that its restriction $\alpha|_{\tilde B}\in H^2(\tilde B,\Gm)$ and its obstruction class $o(\alpha,L^r)\in H^2(B,\Gm)$ are both trivial. Then
\begin{equation}\label{eqn:indper}
\ind(\alpha)\mid\per(\alpha)^g.
\end{equation}
\end{cor}

\begin{proof} Before we start, recall that the definition of $o(\alpha,L^r)\in H^2(B,\Gm)$ in \S\! \ref{sec:glueAS} did not assume the
existence of a zero section of $A\to B$.

If $r=\per(\alpha)$ is coprime to $\chi(A_t,\kl_t)$, then also the period $\per(\tilde\alpha)$ of the
pull-back $\tilde\alpha$ is.
Hence, applying Corollary \ref{cor:piab} to $\tilde\alpha$ while
using (\ref{eqn:LLk}), one finds $\ind(\tilde\alpha)\mid\per(\tilde\alpha)^g$,
if  $\per(\alpha)$ is coprime to $\chi(A_t,\kl_t)$ and $o(\tilde\alpha,\pi^\ast L^r)$ is trivial. More precisely,
in this case there exists an $\tilde\alpha$-twisted locally free sheaf on $\tilde A$ of rank $\per(\alpha)^g$.
Taking its direct image under the projection $\tilde A\to A$, we obtain an $\alpha$-twisted sheaf on
$A$ of rank $k\cdot\per(\alpha)^g$. As $\tilde B\to B$ is only generically finite, this sheaf may only be locally free on some open subset of $A$ but this is enough to conclude $\ind(\alpha)\mid k\cdot\per(\alpha)^g$, which 
in turn proves the assertion.

To conclude, observe that the assumption $o(\alpha,L^r)\in H^2(B,\Gm)$ being trivial implies
that also its pull-back $o(\tilde\alpha,\tilde L^k)\in H^2(\tilde B,\Gm)$ is trivial.
\end{proof}}

\begin{remark}
Note that eventually the assumption that $\per(\alpha)$ and $k$ be coprime should be superfluous. Indeed,
for a genus one fibred surface $S\to C$ one knows $\ind(\alpha)=\per(\alpha)$ by \cite{dJ} without any further assumption on $\per(\alpha)$. The above techniques do not allow to draw this stronger conclusion.
One reason might be that equality here is shown by actually producing a locally free $\alpha$-twisted
sheaf of rank $\per(\alpha)$ which is a priori stronger than the assertion $\ind(\alpha)=\per(\alpha)$.
\end{remark}

\begin{remark}\label{rem:2gbis}
Following up on Remark \ref{rem:2g}, one again gets, and quite easily, the weaker
assertion $\ind(\alpha)\mid\per(\alpha)^{2g}$
for all $\alpha$ with $\per(\alpha)$ coprime to the degree $k$ of the $\alpha$-trivialising section
$\tilde B\subset A$. 

This observation applies directly to projective hyperk\"ahler manifolds $X$ that admit
a Lagrangian fibrations $X\to \PP$ and proves $\ind(\alpha)\mid\per(\alpha)^{\dim(X)}$
for all $\alpha\in \Br(X)$ with $\per(\alpha)$ coprime to the order of $H^3(X,\ZZ)_{\text{tors}}$ and
to the degree $\deg(\tilde \PP/\PP)$ of a Lagrangian multi-section $\tilde \PP\subset\PP$, see \S\! \ref{sec:rigidify}.
\end{remark}

\section{Period-index for Lagrangian fibrations}\label{sec:HK}
The result in the previous section can of course be applied to the smooth part $f\colon A\to B$ of any Lagrangian
fibration $$h\colon X\twoheadrightarrow\PP=\PP^n$$ of a projective hyperk\"ahler manifold $X$. In this case, the relative
dimension  of $A\to B$, denoted $g$ before, is just $n=\dim(X)/2$. Note that it is
conjectured and proved in many cases that the base of any Lagrangian fibration of a projective hyperk\"ahler manifold
is automatically smooth and then isomorphic to the projective space, cf.\ \cite{BS,HuyXu,Hwang,LiTo}. To simplify
the discussion we will just assume this from the outset. In fact, the smoothness of the base implies the flatness of the fibration and this is technically useful.
\smallskip

The structure of this section is as follows: In \S\!  \ref{sec:setup}
we compare the Brauer group of a hyperk\"ahler manifold with its analytic counterpart
and identify the finite index subgroup of those Brauer classes that can be treated with methods
of the previous section. In \S\! \ref{sec:compactify} we explain how to compactify the 
dual abelian scheme $\check A\to B$ to a projective moduli space
$\km\to \PP$ and how to extend at least numerically the natural ample line bundle
$L=\check{\kl\,\,}\!\!$ on $\check A$ used before to a line bundle $M$ on the compactification
$\km$. This slightly technical point is eventually needed to control the obstruction class in \S\! \ref{sec:VanishingOC}. There will be no need of compactifying the twists $A_\alpha$, which
in principle is possible, but we will need to have a universal family of twisted sheaves at our
disposal. This will be addressed in \S\! \ref{sec:rigidify}. 
In \S\! \ref{sec:proof} we combine everything to conclude the proof of Theorem \ref{thm1}.
In the last \S\! \ref{sec:CoveringCubic} we explain how the arguments can be adapted to 
cover Fano varieties of lines on cubic fourfolds, producing a uniform exponent
$5$ instead of the expected $2$.

\subsection{Setup for Theorem \ref{thm1}}\label{sec:setup}
Let $X$ be a complex projective manifold. Then using the usual comparison theorems
of \'etale and analytic cohomology with finite coefficients, the Brauer group $\Br(X)$ can
be seen to be naturally isomorphic to the torsion subgroup of the analytic Brauer group $H^2(X,\ko^\ast)$. The latter sits in the long exact exponential  sequence
$$\cdots\to H^2(X,\ZZ)\to H^2(X,\ko)\to H^2(X,\ko^\ast)\to H^3(X,\ZZ)\to\cdots,$$ where $H^2(X,\ko)$ is one-dimensional. 
The \emph{connected analytic Brauer group} is defined as the `one-dimensional' subgroup
$$ H^2(X,\ko^\ast)^0\coloneqq H^2(X,\ko)/\im(H^2(X,\ZZ)\to H^2(X,\ko))\subset H^2(X,\ko^\ast).$$
Furthermore, since the connected analytic Brauer group is divisible, the algebraic Brauer group $\Br(X)\subset H^2(X,\ko^\ast)$ maps onto $H^3(X,\ZZ)_{\text{tors}}$.
Thus, the intersection 
\begin{equation}\label{eqn:Br0}
\Br(X)^0\coloneqq\Br(X)\cap H^2(X,\ko^\ast)^0\subset\Br(X)
\end{equation}
is a subgroup of finite index
$|H^3(X,\ZZ)_{\text{tors}}|$. In particular, any $\alpha\in \Br(X)$ of order coprime to the order of
$H^3(X,\ZZ)_{\text{tors}}$ is necessarily contained in $\Br(X)^0$.

\begin{remark}\label{rem:TorsionHK}
In the literature one finds the following information about $H^3(X,\ZZ)_{\text{tors}}$ for
the four known deformation types of compact hyperk\"ahler manifolds:
\begin{enumerate}\setlength\itemsep{0.3em}
\item[(i)] The cohomology of the Hilbert scheme $S^{[n]}$ of a K3 surface is torsion free, cf.\ \cite{MarkInt,Tot}. In fact, $H^3(S^{[n]},\ZZ)=0$. 
\item[(ii)] The cohomology of the generalised Kummer variety $K_2(A)$ of dimension four is torsion free, cf.\ \cite{KaMe}. In fact, $H^3(K_2(A),\ZZ)\cong \ZZ^{\oplus 4}$. This is expected to hold for all $K_n(A)$.\footnote{Torsion freeness was originally claimed in all dimensions, but a problem with the proof was pointed out by Totaro, see the Corrigendum.}
\item[(iii)] The odd Betti numbers of varieties of OG6-type are trivial, cf.\ \cite{MoRaSa}. It is expected that $H^3(X,\ZZ)=0$.
\item[(iv)] The odd Betti numbers of varieties of OG10-type are trivial, cf.\ \cite{DCaRaSa}. It is expected that $H^3(X,\ZZ)=0$.
\end{enumerate}
So it seems, that at the moment no compact hyperk\"ahler manifold is known that would admit non-trivial torsion classes in $H^3(X,\ZZ)$. However,  there is no a priori reason why  $\Br(X)^0=\Br(X)$ should always hold.
\end{remark}

By definition of a Lagrangian fibration, the restriction maps $H^0(X,\Omega_X^2)\to H^0(A_t,\Omega^2_{A_t})$ and their complex conjugates
$$H^2(X,\ko)\to H^2(A,\ko)\to H^2(A_t,\ko)$$  to the smooth fibres $A_t$
are trivial. In particular, the image of
$\Br(X)^0\to \Br(A)$ is contained in $\Br_1(A)$. Altogether this proves that restriction
to the open subset $A\subset X$ defines a map
$$\Br(X)^0\to \Br_1(A)^0,$$ so that Corollary \ref{cor:piab} applies to all classes in $\Br(X)^0$ and thus to
all classes of order coprime to $|H^3(X,\ZZ)_{\text{tors}}|$. 
Since in the Leray spectral sequence $H^2(X,\ko_X^\ast)\to H^1(B,R^1f_\ast(\ko_X^\ast))$ maps
$H^2(X,\ko_X^\ast)^0$ into $H^1(B, (R^1f_\ast(\ko_X^\ast))^0)$ (use continuity), one obtains the map $$\Br(X)^0\to H^1(B,(R^1f_\ast\Gm)^0),$$ so that the discussion of the preceding sections applies.

\subsection{Line bundles on the compactified dual abelian scheme}\label{sec:compactify}
We continue to consider a Lagrangian fibration $h\colon X\to\PP$ of a projective hyperk\"ahler manifold $X$. We  fix in addition an ample line bundle $\kl$ on $X$ and also use
it as a polarisation of the smooth part $f\colon A\to B$, which may not have a section.\smallskip

The dual abelian scheme $\check A\to B$
 parametrises numerically trivial line bundles on the fibres
$A_t$. Since these fibres are integral, any line bundle on one of them, considered as a sheaf on the ambient projective
variety $X$, is stable with respect to any polarisation. Hence, $\check A$ can be viewed as an open subset
of the locally projective moduli space of all sheaves that are semi-stable with respect to the polarisation $\kl$, see \cite{HuLe,Simpson} for the construction and further references.
Let us denote by $\km$ the irreducible component of this moduli space that contains $\check A$. So, $\km$ is now a projective variety for which the support map  $\km\to \PP$ extends the twisted Picard scheme:
$$\xymatrix@R=1pt{\check A\ar[r]&B\\
\cap&\cap\\
\km\ar[r]&\PP.}$$ 

The existence of the support map requires the morphism $h\colon X\to\PP$ to be flat which
is in fact equivalent to the smoothness of the base, cf.\ \cite[Rem.\ 1.18]{HuyMau}.
\smallskip

%

Of course, the projective moduli space $\km$ comes with a (non-unique) ample line
bundle. To prove Theorem \ref{thm1} with an unspecified integer $N_X$,  this is all that is needed. However, if one wants to be more specific, e.g.\ as in Corollaries \ref{cor:piab} and \ref{cor:nosec}, then we need control over the degree of the ample line bundles
on the fibres $\check A_t\subset \check A\subset \km$. This should be considered
as a technical issue and the reader is encouraged to skip the proof of the next result
at first reading. Its proof will occupy the rest of the section.

\begin{prop}\label{prop:lbM}
There exists a line bundle $M$ on $\km$ such that its restriction $M_t\coloneqq M_{\check A_t}$
satisfies $\chi(\check A_t,M_t)=m_1\cdot\chi(A_t,\kl_t)^{m_2}$ for certain integers
$m_1$ and $m_2$. Moreover, the coefficient $m_1$ can be chosen coprime
to any given integer $r$.
\end{prop}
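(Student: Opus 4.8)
The plan is to realise $M$ as a determinant (theta) line bundle on the moduli space $\km$, following the standard construction of Le Potier, Li and Simpson, cf.\ \cite{HuLe,Simpson}, and to compute its fibre degree by restricting to a general fibre $\check A_t$, where everything collapses to the Fourier--Mukai computation on the abelian variety $A_t$ already recorded in Remark \ref{rem:GeomAppl}. Concretely, since $A\to B$ need not admit a section, the moduli space $\km$ carries only a quasi-universal sheaf, equivalently an honest universal sheaf $\ke$ on $X\times\km$ twisted by a Brauer class $\beta\in\Br(\km)$ pulled back from the base; let $\rho$ denote the order of $\beta$. For $u\in\K(X)$ one sets $\lambda(u)\coloneqq\det(p_{\km!}(\ke\otimes q^\ast u))$, where $p_{\km}$ and $q$ are the two projections. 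This is a priori a $\beta^{w(u)}$-twisted line bundle, with $w(u)=\chi(X,\ke_s\otimes u)$ the rank of $p_{\km!}(\ke\otimes q^\ast u)$ at a point $s\in\km$; hence $\lambda(u)$ descends to an honest line bundle on $\km$ as soon as $\rho\mid w(u)$.

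Next I would compute the restriction to a fibre. Over $\check A\subset\km$ the twisted universal sheaf restricts to the Poincar\'e sheaf, and since $\beta$ is trivial on a single fibre $\check A_t$, there $M_t=\lambda(u)|_{\check A_t}$ is an honest line bundle computed by the genuine Poincar\'e bundle $\kp_t$ on $A_t\times\check A_t$, namely $M_t=\det(\Phi_{\kp_t}(u_t))^{\pm1}$ with $u_t\coloneqq u|_{A_t}$. Taking $u=\sum_a n_a[\kl^a]$ with integers $n_a$, additivity of $\det\circ\,\Phi_{\kp_t}$ together with Mukai's computation \cite[Prop.\ 3.11]{Mukai} underlying Remark \ref{rem:GeomAppl} — applied both to $\kl_t$ and, by the same reasoning, to $\kl_t^a$, giving that the dual of $\det\Phi_{\kp_t}(\kl_t^a)$ has first Chern class $a^{g-1}c_1(L_t)$ where $L=\check\kl$ — yields
$$c_1(M_t)=Q(n)\cdot c_1(L_t)\quad\text{with}\quad Q(n)\coloneqq\textstyle\sum_a n_a\,a^{g-1},\qquad \chi(\check A_t,L_t)=\chi(A_t,\kl_t)^{g-1}.$$
Riemann--Roch on the abelian variety $\check A_t$ then gives
$$\chi(\check A_t,M_t)=Q(n)^{g}\cdot\chi(A_t,\kl_t)^{g-1},$$
which is the asserted shape with $m_1=Q(n)^{g}$ and $m_2=g-1$.

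It remains to meet the descent condition $\rho\mid w(u)$ and the coprimality of $m_1$ to the given $r$ simultaneously. Here $w(u)=\sum_a n_a\,\chi(A_t,\kl_t^a)=d\cdot P(n)$ with $d\coloneqq\chi(A_t,\kl_t)$ and $P(n)\coloneqq\sum_a n_a\,a^{g}$, so the two requirements read $\rho\mid d\,P(n)$ and $\gcd(Q(n),r)=1$. I would use only two exponents $a,b$: the $2\times2$ matrix of the forms $(P,Q)$ in $(n_a,n_b)$ has determinant $\pm\,a^{g-1}b^{g-1}(b-a)$, and choosing $a,b$ with $a$, $b$ and $b-a$ all coprime to $\rho r$ (possible by the Chinese remainder theorem, since one need only avoid the finitely many primes of $\rho r$) makes this determinant invertible modulo $\rho r$. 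One can then solve for $(n_a,n_b)$ realising $P(n)\equiv0$ modulo $\rho$ and $Q(n)\equiv1$ modulo $r$, which forces $\rho\mid w(u)$ and $m_1=Q(n)^g$ coprime to $r$; relative ampleness, if wanted, is restored by adding a large multiple of a fixed relatively ample determinant class lying in the kernel of these congruences. I expect the main obstacle to be the bookkeeping of the twist: verifying that $\lambda(u)$ is genuinely $\beta^{w(u)}$-twisted and descends exactly when $\rho\mid w(u)$, and that on the smooth locus it restricts fibrewise to $\det(\Phi_{\kp_t}(u_t))$, so that the clean congruence argument above indeed applies.
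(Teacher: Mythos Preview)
Your approach and the paper's share the same core: build $M$ as a determinant line bundle on $\km$ and identify its fibre degree via Mukai's Fourier--Mukai computation, arriving at $m_2=g-1$. The execution, however, is different. The paper works on the Quot scheme, where a genuine universal quotient $\tilde\kp$ exists, sets $\tilde M_\ell=\det p_{1*}(\tilde\kp\otimes p_2^*\kl^\ell)$, and lets the GIT construction descend this to an ample line bundle $M_\ell$ on $\km$ automatically; the fibrewise Euler characteristic is then computed by pulling back along an explicit classifying morphism $A_t\to\text{Quot}_t$, which is where the extra factors in $N$ and $\ell'$ enter. In particular no Brauer obstruction $\rho$ ever has to be negotiated, and the comparison with $\kp_t$ is made completely explicit rather than left ``up to a twist''.

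Your route, by contrast, works directly on $\km$ with a $\beta$-twisted universal sheaf and must impose $\rho\mid w(u)$. This is where there is a genuine gap. The claim that one can choose $a,b$ with $a$, $b$, and $b-a$ all coprime to $\rho r$ fails at the prime $2$: two odd integers have even difference. The defect is structural, not an artefact of using only two exponents: since $a^g\equiv a^{g-1}\pmod 2$ for every $a$, one has $P(n)\equiv Q(n)\pmod 2$ identically, so whenever $2\mid\rho$ (with $d=\chi(\kl_t)$ odd) and $2\mid r$, the requirements $\rho\mid dP(n)$ and $\gcd(Q(n),r)=1$ are incompatible and your congruence system has no solution. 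A second, smaller issue you flag but do not resolve: even when $\rho\mid w(u)$, the identification $\lambda(u)|_{\check A_t}\cong\det\Phi_{\kp_t}(u_t)$ holds only up to a factor $N_t^{w(u)}$ coming from the noncanonical untwisting of $\ke$ over $\check A_t$, and this can alter $\chi(\check A_t,M_t)$ unless $w(u)=0$. Both problems disappear in the paper's Quot-scheme construction, where descent is automatic and the universal family is explicit.
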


The basic idea is to compare the polarisations of $\km$ given by its description as a moduli space with the polarisation of the dual abelian
scheme $\check A\to B$ discussed before. We start by recalling both settings.

\smallskip

$\bullet$ Let us first consider again the dual abelian scheme $\check A\to B$. If $A\to B$ admits
a section, then a universal Poincar\'e bundle $\kp$ on $\check A\times_BA$ exists and
it is unique if appropriately normalised. However, to describe it explicitly on a fibre $A_t$,
it is common to use a polarisation $\kl$ on $A$, its restriction $\kl_t\coloneqq\kl_{A_t}$ on $A_t$,
and the induced morphism $\varphi_{\kl_{t}}\colon A_t\to\check A_t$.
Then, cf.\ \cite[\S\! 13]{MumAV}: 
\begin{equation}\label{eqn:Poinc}
(\varphi_{\kl_{t}}\times {\rm id})^\ast\kp|_{\check A_t\times A_t}\cong m^\ast\kl_{{t}}\otimes p_1^\ast\kl_{{t}}^{-1}\otimes
p_2^\ast\kl_{{t}}^{-1}.
\end{equation}
Here, $m\colon A_t\times A_t\to A_t$ is the multiplication and $p_1,p_2$
denote the two projections. Used as a Fourier--Mukai kernel, the Poincar\'e bundle $\kp$
induces an equivalence $\Phi_\kp\colon \Db(A)\congpf\Db(\check A)$, which
we have used before to define a polarisation $L\coloneqq\check{\kl\,\,}\!\!$ on $\check A$
as the dual of the determinant of $\Phi_\kp(\kl)$. To control the degree  of the restriction
$L_t\coloneqq L|_{\check A_t}$, we express  the
pull-back $\varphi_{\kl_t}^\ast L_t$ using a result of Mukai \cite[Prop.\ 3.11]{Mukai}.
 Fibrewise it says
\begin{equation}\label{eqn:Mukai}
\varphi_{\kl_t}^\ast \Phi_\kp(\kl_t)\cong H^0(A_t,\kl_{t})\otimes \kl^\ast_{t}\text{ and hence }
\varphi_{\kl_t}^\ast L_t\cong \kl_t^{\chi(\kl_t)}
\end{equation}
which together with $\deg(\varphi_{\kl_t})=\chi(A_t,\kl_t)^2$ implies $\chi(\check A_t,L_{t})=\chi(A_t,\kl_{t})^{g-1}$, see also \cite{Bi}. 

Note that in the following, we will not use the existence
of a zero section of $A\to B$ nor the existence of $\kp$ on $\check A\times_BA$. All computations will be done on the geometric fibres $A_t$ and $\check A_t$ where the
Poincar\'e bundle $\kp_t$ on $\check A_t\times A_t$ always exists.
\smallskip

$\bullet$ Alternatively, we can treat $\check A$ as (an open subset of) a moduli space of stable sheaves on $A$ or $X$ and as such it is constructed as a GIT quotient of some Quot-scheme, see \cite{HuLe,Simpson} for details and references concerning the construction of moduli spaces of
semi-stable sheaves. More
precisely, we consider all sheaves parametrised by $\check A$, so numerically trivial line bundles $P$
on the fibres $A_t$, as quotients $(\kl_{t}^{-N})^{\oplus \chi_N}\twoo P$, where $\chi_N\coloneqq \chi(\kl_{t}^N\otimes P)=\chi(\kl_{t}^N)=N^g\cdot\chi(\kl_t)$ and $N$ is chosen sufficiently big. At this point in the discussion, $N> 2$ is sufficient, but we might need higher powers later for the compactification $\km$. 

Then there is an open subset ${\mathcal R}_B\subset \text{Quot}_B$ of the relative Quot-scheme 
$\text{Quot}_B\to B$ of quotients 
of $(\kl_{t}^{-N})^{\oplus \chi_N}$ parametrising only line bundles on the fibres $A_t$ such that
$\check A$ is a good quotient by the natural $\text{PGL}(\chi_N)$-action
\begin{equation}\label{eqn:GIT}
\pi\colon{\mathcal R}_B\twoo \check A={\mathcal R}_B/\!/\,\text{PGL}(\chi_N).
\end{equation}
 
 The Quot-scheme representing the Quot-functor  comes with a universal quotient sheaf
 $\tilde \kp$ on $\text{Quot}_B\times_BA$ which is used to identify relatively ample line bundles
 \begin{equation}\label{eqn:ampleMl}
 \tilde M_\ell\coloneqq\det p_{1\ast}(\tilde\kp\otimes p_2^\ast \kl^\ell),
 \end{equation}$\ell\gg0$, on $\text{Quot}_B$, see \cite[Ch.\ 4.3]{HuLe}. The GIT quotient
 (\ref{eqn:GIT}) is in fact formed with respect to the linearisation given by $\tilde M_\ell$ and, therefore,  $\tilde M_\ell$ descends to an ample line bundles $M_\ell$ on $\check A$.   \smallskip
 
 While $L=\check{\kl\,\,}\!\!$ on $\check A$ is easier to construct, it is
 the line bundles $M_\ell$ that naturally extend to line bundles on $\km$ under the inclusion $\check A\subset \km$ and we will need to have control over their degree on the fibres. This is achieved by comparing the line bundles $M_\ell$ to $L$ and the first step is comparing 
 the restrictions  $\tilde \kp_t\coloneqq \tilde\kp|_{\text{Quot}_t\times A_t}$ and $\kp_t\coloneqq \kp|_{\check A_t\times A_t}$.
 \smallskip

Choose a basis of $H^0(A_t,\kl_t^N)$ and compose it with the evaluation map
to produce a surjection  $\ko_{A_t}^{\oplus \chi_N}\cong H^0(A_t,\kl_t^N)\otimes\ko_{A_t}\twoo\kl_t^N$. Then pulling back under the multiplication
$m\colon A_t\times A_t\to A_t$, tensoring with $p_2^\ast\kl_t^{-N}$,  and applying (\ref{eqn:Poinc}) yields a surjection 
$$(\ko\boxtimes\kl_t^{-N})^{\oplus\chi_N}\twoo (\varphi_{\kl_t^N}\times{\rm id})^\ast(\kp_t)\otimes p_1^\ast\kl_t^{N}$$
on $A_t\times A_t$, which in turn defines a classifying morphism
$\tilde\psi\colon A_t\to \text{Quot}_t$ with $$(\tilde\psi\times{\rm id})^\ast\tilde\kp_t\cong
(\varphi_{\kl_t^N}\times{\rm id})^\ast(\kp_t)\otimes p_1^\ast\kl_t^{N}$$
on $A_t\times A_t$. 
Note that by construction, the composition $\psi\coloneqq\pi_t\circ\psi\colon A_t\to \text{Quot}_t\to\check A_t$ is simply $\varphi_{\kl^N_t}$.\smallskip

With this we can now numerically identify the ample line bundle $M_\ell$ on $\check A$  
on the fibres.

\begin{lem} 
For $\ell=N\cdot\ell'\gg0$, the line bundle  $M_{\ell t}$ on $\check A_t$ is ample
with $$\chi(\check A_t,M_{\ell t})=N^{g^2-g}\cdot\ell'^{g^2-g}\cdot(\ell'-1)^g\cdot \chi(A_t,\kl_t)^{g-1}.$$
\end{lem}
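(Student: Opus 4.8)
The plan is to pull everything back along the classifying morphism $\psi=\varphi_{\kl_t^N}$ and to reduce the statement to a Chern-class computation on $A_t\times A_t$. Since $M_\ell$ is characterised by $\pi^\ast M_\ell\cong\tilde M_\ell$ and $\pi_t\circ\tilde\psi=\psi$, one has $\psi^\ast M_{\ell t}\cong\tilde\psi^\ast\tilde M_{\ell t}$. For $\ell\gg0$ the parametrised sheaves have no higher cohomology, so $p_{1\ast}$ is locally free and commutes with base change; hence $\tilde\psi^\ast\det p_{1\ast}(\tilde\kp_t\otimes p_2^\ast\kl_t^\ell)\cong\det p_{1\ast}\bigl((\tilde\psi\times\id)^\ast\tilde\kp_t\otimes p_2^\ast\kl_t^\ell\bigr)$. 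Combining the formula for $(\tilde\psi\times\id)^\ast\tilde\kp_t$ established above with the Poincar\'e-bundle identity \eqref{eqn:Poinc} applied to the polarisation $\kl^N$ gives $(\tilde\psi\times\id)^\ast\tilde\kp_t\cong m^\ast\kl_t^{N}\otimes p_2^\ast\kl_t^{-N}$, so that with $\ell=N\ell'$
\[
\psi^\ast M_{\ell t}\cong\det\kf,\qquad \kf\coloneqq p_{1\ast}(\kh),\quad \kh\coloneqq m^\ast\kl_t^{N}\otimes p_2^\ast\kl_t^{N(\ell'-1)}.
\]

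First I would record the rank: the restriction of $\kh$ to a fibre $\{x\}\times A_t$ is $t_x^\ast\kl_t^{N}\otimes\kl_t^{N(\ell'-1)}$, numerically $\kl_t^{\ell}$, which for $\ell\gg0$ is ample with vanishing higher cohomology, so $\kf$ is locally free. Next I would compute $c_1(\kf)$ by Grothendieck--Riemann--Roch for $p_1$: the relative tangent bundle is trivial, so $\td(T_{p_1})=1$ and $\ch(\kf)=p_{1\ast}\,e^{c_1(\kh)}$ with $c_1(\kh)=N\,m^\ast H+N(\ell'-1)\,p_2^\ast H$, where $H\coloneqq c_1(\kl_t)$ (taken symmetric, which is harmless as only numerical classes enter). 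Extracting the degree-two part gives
\[
c_1(\kf)=\frac{N^{g+1}}{(g+1)!}\,p_{1\ast}(\xi^{g+1}),\qquad \xi\coloneqq m^\ast H+(\ell'-1)\,p_2^\ast H=p_1^\ast H+\ell'\,p_2^\ast H+\beta,
\]
where $\beta\in H^1(A_t)\otimes H^1(A_t)$ is the K\"unneth cross-term in $m^\ast H=p_1^\ast H+p_2^\ast H+\beta$.

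The core of the argument is the evaluation of $p_{1\ast}(\xi^{g+1})$: I would expand and keep only the two multi-indices of K\"unneth bidegree $(2,2g)$, the only ones surviving $p_{1\ast}$. The term $p_1^\ast H\cdot(\ell'p_2^\ast H)^{g}$ contributes $(g+1)!\,\ell'^{g}\chi(\kl_t)\,H$ via $p_{1\ast}p_2^\ast(H^{g})=g!\,\chi(\kl_t)$, and the term $(\ell'p_2^\ast H)^{g-1}\beta^{2}$ supplies the correction. The latter is where the real work lies: writing $\beta=\sum_{i,j}H_{ij}\,e_i\otimes e_j$ in a basis of $H^1(A_t)$ and using the standard Pfaffian--adjugate identities relating $\int_{A_t}H^{g-1}\wedge e_j\wedge e_l$ to the cofactors of $(H_{ij})$ together with $\sum_l H_{kl}\,c_{lj}=\mathrm{Pf}(H)\,\delta_{kj}=\chi(\kl_t)\,\delta_{kj}$, one finds $p_{1\ast}(p_2^\ast H^{g-1}\cdot\beta^{2})=-2(g-1)!\,\chi(\kl_t)\,H$; I would first verify this for $g=1$, where $\beta^{2}=-2\chi(\kl_t)^{2}(e_1e_2)\otimes(e_1e_2)$ makes it transparent. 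Combining the two contributions, the $\ell'^{g}$-parts partially cancel and $c_1(\kf)=N^{g+1}\ell'^{g-1}(\ell'-1)\,\chi(\kl_t)\,H$. This Pfaffian identity for the correlation class $\beta$ is the hardest step; everything else is formal.

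Finally I would convert $c_1(\kf)$ into the Euler characteristic. Since $\psi=\varphi_{\kl_t^N}$ is an isogeny of degree $\deg\psi=\chi(\kl_t^N)^2=N^{2g}\chi(\kl_t)^2$, one has $\chi(\check A_t,M_{\ell t})=\tfrac{1}{\deg\psi}\,\chi(A_t,\psi^\ast M_{\ell t})=\tfrac{1}{\deg\psi}\,\tfrac{c_1(\kf)^g}{g!}$, so substituting $c_1(\kf)$ and $H^g=g!\,\chi(\kl_t)$ yields
\[
\chi(\check A_t,M_{\ell t})=\frac{N^{(g+1)g}\,\ell'^{(g-1)g}\,(\ell'-1)^g\,\chi(\kl_t)^{g+1}}{N^{2g}\,\chi(\kl_t)^2}=N^{g^2-g}\,\ell'^{g^2-g}\,(\ell'-1)^g\,\chi(\kl_t)^{g-1},
\]
as claimed; the ampleness of $M_{\ell t}$ is inherited from that of $M_\ell$ on $\check A$ for $\ell\gg0$.
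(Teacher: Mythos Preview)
Your proof is correct and arrives at the same numerical answer, but the route is genuinely different from the paper's. After the common opening identification $\psi^\ast M_{\ell t}\cong\det p_{1\ast}(m^\ast\kl_t^N\otimes p_2^\ast\kl_t^{N(\ell'-1)})$, the paper does \emph{not} compute $c_1(\kf)$ by Grothendieck--Riemann--Roch. Instead it rewrites $\psi^\ast M_{\ell t}\cong\varphi_{\kl_t^N}^\ast\det\Phi_{\kp_t}(\kl_t^\ell)\otimes\kl_t^{N\chi_\ell}$, factors the isogeny as $\varphi_{\kl_t^\ell}=(\ell'\cdot)\circ\varphi_{\kl_t^N}$, and then invokes Mukai's formula $\varphi_{L}^\ast\Phi_\kp(L)\cong H^0(L)\otimes L^{-1}$ together with the standard identity for $(n\cdot)^\ast$ of a line bundle on an abelian variety. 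This yields the numerical class of $\psi^\ast M_{\ell t}$ without ever touching the K\"unneth cross-term $\beta$ or any Pfaffian cofactor identity. Your approach trades the black box of Mukai's result for an explicit linear-algebraic computation: it is more elementary in that it needs no Fourier--Mukai input, but the identity $p_{1\ast}(p_2^\ast H^{g-1}\cdot\beta^2)=-2(g-1)!\,\chi(\kl_t)\,H$ is the genuine content of your argument and deserves more than the one-line sketch you give (the $g=1$ check is reassuring but not a proof). Either way the outcome and the final division by $\deg(\varphi_{\kl_t^N})$ agree.
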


For the application we have in mind the precise form of the right hand side
 is of no importance. The only thing that
matters is that first $N\gg0$ and then $\ell'\gg0$ can be chosen such that any $r$ coprime
to $\chi(A_t,\kl_t)$ will also be coprime to $\chi(\check A_t,M_{\ell t})$.

\begin{proof}
By the above and using the shorthand $\chi_\ell\coloneqq \chi(A_t,\kl_t^\ell)$, one has
\begin{eqnarray*}\varphi_{\kl^N_t}^\ast(M_{\ell t})&\cong& \psi^\ast( M_{\ell t})\cong\tilde\psi^\ast (\tilde M_{\ell t})
\cong\det\tilde\psi^\ast(p_{1\ast}(\tilde\kp_t\otimes p_2^\ast\kl_t^\ell))\\
&\cong&
\det p_{1\ast}\left((\tilde\psi\times{\rm id})^\ast\tilde\kp_t\otimes p_2^\ast\kl_t^\ell\right)\\
&\cong& \det p_{1\ast}\left((\varphi_{\kl_t^N}\times{\rm id})^\ast\kp_t\otimes p_1^\ast\kl_t^N\otimes p_2^\ast\kl_t^\ell\right)\\&\cong&\det \left( p_{1\ast}\left((\varphi_{\kl_t^N}\times{\rm id})^\ast\kp_t\otimes p_2^\ast\kl_t^\ell\right)\otimes \kl_t^N\right)\\
&\cong&\det\left(\varphi_{\kl_t^N}^\ast\Phi_{\kp_t}(\kl_t^\ell)\otimes \kl_t^N\right)\\
&\cong&\varphi_{\kl_t^N}^\ast\det\left(\Phi_{\kp_t}(\kl_t^\ell)\right)\otimes\kl_t^{N\cdot\chi_\ell}.
\end{eqnarray*}
For $\ell=N\cdot \ell'$ we write $\varphi_{\kl_t^\ell}\colon A_t\to \check A_t$ as the composition $(\ell'\cdot ~) \circ\varphi_{\kl_t^N}\colon A_t\to\check A_t\to \check A_t$
and use (\ref{eqn:Mukai}) to compute
\begin{eqnarray*}\kl_t^{-\ell\cdot\chi_\ell}&\cong&\det((\kl_t^{-\ell})^{\oplus \chi_\ell})\cong\det\varphi_{\kl_t^\ell}^\ast\Phi_{\kp_t}(\kl_t^\ell)\\&\cong&\det\varphi^\ast_{\kl_t^N}(\ell'\cdot)^\ast\Phi_{\kp_t}(\kl_t^\ell)\cong\varphi^\ast_{\kl_t^N}((\ell'\cdot)^\ast\det\Phi_{\kp_t}(\kl_t^\ell))\\
&\cong&\varphi^\ast_{\kl_t^N}\left((\det\Phi_{\kp_t}(\kl_t^\ell))^{\ell'(\ell'+1)/2}\otimes (-1)^\ast(\det\Phi_{\kp_t}(\kl_t^\ell))^{\ell'(\ell'-1)/2})\right),
\end{eqnarray*}
where we use the standard formula for $(n\cdot)^\ast L$ of a line bundle $L$ on an abelian variety.
One could speculate that $\det \Phi_{\kp_t}(\kl_t^\ell)$ is in fact symmetric, in which
case one would have
\begin{equation}\label{eqn:aftersym}
\kl_t^{-\ell\cdot\chi_\ell}\cong\varphi^\ast_{\kl_t^N}\left(\det\Phi_{\kp_t}(\kl_t^\ell)\right)^{\ell'^2},
\end{equation}
but in any case we know that (\ref{eqn:aftersym}) is always true up to numerical equivalence. This then shows that $\varphi_{\kl_t^N}^\ast(M_{\ell t})^{\ell'^2}$ is numerically equivalent to $\kl_t^{\ell^{g+1}\cdot\chi_1\cdot(\ell'-1)}$, which is enough to conclude
\begin{eqnarray*}
\chi(\check A_t,M_{\ell t})&=&(\deg(\varphi_{\kl_t^N})\cdot \ell'^{2g})^{-1}\cdot\chi(A_t,\varphi_{\kl_t^N}^\ast(M_{\ell t})^{\ell'^2})\\
&=&(\deg(\varphi_{\kl_t^N})\cdot \ell'^{2g})^{-1}
\cdot(\ell^{g+1}\cdot\chi_1\cdot(\ell'-1))^g\cdot \chi(A_t, \kl_t)\\
&=&\ell^{g^2-g}\cdot(\ell'-1)^g\cdot\chi(A_t,\kl_t)^{g-1}
\end{eqnarray*}
and thus proving the assertion of the lemma.
\end{proof}

We can now complete the proof of Proposition \ref{prop:lbM}. The moduli space $\km$
is constructed as a GIT quotient of some open subset of semi-stable points
${\mathcal R}\subset \text{Quot}$
$${\mathcal R}\twoo \km={\mathcal R}/\!/\,\text{PGL}(\chi_N)$$ extending (\ref{eqn:GIT}) that described $\check A$ as a GIT-quotient. The universal quotient $\tilde\kp$ on $\text{Quot}_B\times_BA$ extends to a universal quotient
on $\text{Quot}\times_\PP X$ and the formula (\ref{eqn:ampleMl}) also describes
ample line bundles on $\km$, again denoted $M_\ell$. We now apply the
previous lemma with $m_1=\ell^{g^2-g}\cdot(\ell'-1)^g=N^{g^2-g}\cdot\ell'^{g^2-g}\cdot(\ell'-1)^g$ and $m_2=g-1$ combined with the observation that this $m_1$ can obviously assumed to be coprime to any given $r$ with $N$ and then $\ell'$ are chosen sufficiently large.\qed

\EV{
\begin{remark}
As discussed in \S\! \ref{sec:gone} already, the moduli space $\check A$ need not be fine and hence also $M$ has no reason to be fine.
 There we argued that $\check A$ can be turned into a fine moduli space after a generically finite base change $\tilde B\to B$
 to a multi-section of $f\colon A\to B$.
The closure $\tilde\PP$ of $\tilde B$ in $X$ can be viewed as a generically finite multi-section $\tilde \PP\subset X$
of the projection $h\colon X\to \PP$. In fact, as for the Picard scheme, the existence of the multi-section 
$\tilde \PP$ immediately implies that the base change $M\times_\PP\tilde \PP$ is a fine moduli space parametrising
only stable sheaves supported on the fibres of $X\times_\PP\tilde\PP\to\tilde \PP$. In other words,  we may assume that 
the Poincar\'e bundle on $(\check A\times_B A)\times_B\tilde B$ extends to a universal
sheaf $\ke$ on $(M\times_\PP X)\times_\PP\tilde \PP$. The Poincar\'e bundle and the universal sheaf can both be used
to produce line bundles on $\check A\times_B\tilde B$ and $M\times_\PP\tilde \PP$ out of a given line bundle $\kl$
on $X$ and its restriction to $A$, cf.\ the discussion in \S\! \ref{sec:gone}.

Explicitly, if $\kl$ is a relative ample line bundle on $X$ and the multi-section $\tilde B\to B$ is of degree $k$,
then there exists a line bundle $L$ on $M$ such that their degrees on the generic closed
fibre compare to each other via $\chi(M_t=\check A_t,L_{\check A_t})=k^g\cdot\chi(X_t=A_t,\kl_{A_t})^{g-1}$.
\end{remark}}

\subsection{Vanishing of the obstruction class}\label{sec:GLUE}\label{sec:VanishingOC}
We are now taking the next step towards extending the obstruction $o(\alpha,L^r)\in\Br(B)$ from $B$ to $\PP$ with the ultimate goal to show its triviality by using the triviality of $\Br(\PP)$.
Here, $L$ denotes the restriction of the ample line bundle $M$ on the compactification $\check A\subset\km$ as in Proposition \ref{prop:lbM}, so depending on the period $r=\per(\alpha)$.
There are again similarities between the discussion in this section  and Abasheva's results \cite[\S\! 3.0.3]{Abash}. The main difference is that Abasheva twists an actual hyperk\"ahler manifold while we twist a moduli space $\km$ of sheaves on a hyperk\"ahler manifold
with $\km$ usually not even be smooth.\smallskip

Recall that the obstruction class $o(\alpha,L^r)$ was obtained as the image of a certain class $\alpha'$ under
the boundary map $\delta\colon H^1(B, \underline{\check A}[r])\to H^2(B,\GG_m)$ induced
by the short exact sequence  (\ref{eqn:sesAV}),
which, using the identification $\underline{\check{A}}[r]=R^1f_\ast\mu_r$,
can alternatively be written as 
\begin{equation}\label{eqn:ses1}
\xymatrix{0\ar[r]&\Gm\ar[r]&\underline{\Aut}((\check{A},L^r)/B)[r]\ar[r]&R^1f_\ast\mu_r\ar[r]&0\,.}
\end{equation}

We then proceed in two steps:
\begin{enumerate}\setlength\itemsep{0.3em}
\item[(i)] The short exact sequence (\ref{eqn:ses1}) extends from $B$ to a
short exact sequence on $\PP$:
\begin{equation}\label{eqn:ses}
\xymatrix{0\ar[r]&\Gm\ar[r]&\ast\ar[r]&R^1h_\ast\mu_r\ar[r]&0\,.}
\end{equation}
In fact, for technical reasons the quotient sheaf $R^1h_\ast\mu_r$ will be replaced by a
certain subsheaf $\kk_r\subset R^1h_\ast\mu_r$ which still restricts to
$R^1f_\ast\mu_r$.
\item[(ii)] The class $\alpha'\in H^1(B,R^1f_\ast\mu_r)$ is the restriction of a class in $H^1(\PP,R^1h_\ast\mu_r)$ (or rather in $H^1(\PP,\kk_r)$).
\end{enumerate}

Let us begin with (i) for which we will make use of a slightly bigger open subset 
$$\check A\subset P\subset \km,$$
parametrising only stable sheaves on $X$ that are line bundles on the fibres $X_t$
which are in addition numerically trivial on each component.
Note that then $P\to \PP$ is a group scheme and clearly surjective.
Moreover, tensor product on the fibres defines an action $P\times_\PP \km\to \km$
which we use to prove the following result.

\begin{lem}
There exists a natural map 
\begin{equation}\label{eqn:map}
R^1h_\ast\mu_r\to \underline\Aut(\km/\PP)
\end{equation}
extending $R^1f_\ast\mu_r\hookrightarrow (R^1f_\ast\Gm)^0\cong
 \underline{\Aut}^0(\check{A}/B)$,  cf.\ {\rm(\ref{eqn:WC})}.
\end{lem}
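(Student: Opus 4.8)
The plan is to realise the map through the tensor-product action of the relative Picard group scheme on $\km$, feeding into it the torsion line bundles that underlie the $\mu_r$-torsors parametrised by $R^1h_\ast\mu_r$. First I would unwind $R^1h_\ast\mu_r$ by means of the relative Kummer sequence $\xymatrix{1\ar[r]&\mu_r\ar[r]&\Gm\ar[r]^-r&\Gm\ar[r]&1}$ on $X$: pushing forward along $h$ shows that over an \'etale open $U\to\PP$ a section of $R^1h_\ast\mu_r$ is represented by a $\mu_r$-torsor on $X_U=X\times_\PP U$, that is a line bundle $\kp$ on $X_U$ equipped with a trivialisation $\kp^{\otimes r}\cong\ko_{X_U}$. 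The boundary map into $R^1h_\ast\Gm$ records the class of $\kp$, which is $r$-torsion, while its kernel originates from $h_\ast\Gm/r$; since the construction below only uses $\kp$ and ignores the trivialisation, the map will automatically factor through the torsion subsheaf of $R^1h_\ast\Gm$ and be well defined on all of $R^1h_\ast\mu_r$.

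Second I would use that tensoring by $\kp$ defines an automorphism of $\km$ over $\PP$. On each fibre $X_t$ the restriction $\kp_t\coloneqq\kp|_{X_t}$ is a line bundle whose first Chern class ${\rm c}_1(\kp_t)$ is torsion, hence numerically trivial on every component of $X_t$. Consequently, if $\kf$ is a stable sheaf on $X_t$ representing a point of $\km$, then $\kf\otimes\kp_t$ is again stable with the same numerical invariants and therefore again lies in the same irreducible component $\km$, the inverse automorphism being given by $\otimes\,\kp_t^{-1}$. Performing this in the family $X_U\to U$, i.e.\ tensoring (\'etale-locally on $\km$) the sheaves parametrised by $\km_U=\km\times_\PP U$ with the pullback of $\kp$, yields an automorphism of $\km_U$ over $U$, that is a section of $\underline{\Aut}(\km/\PP)$ over $U$; this is precisely the restriction to the $r$-torsion of the action $P\times_\PP\km\to\km$. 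Sheafifying in $U$ produces the desired morphism $R^1h_\ast\mu_r\to\underline{\Aut}(\km/\PP)$, and it is a homomorphism of sheaves of groups because the class of $\kp_1\otimes\kp_2$ is sent to the composite of the two tensor automorphisms.

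Third I would verify that this extends the map of {\rm(\ref{eqn:WC})}. Over the smooth locus $B\subset\PP$ the relevant component of $P$ is the identity component $\check A=\Pic^0(A/B)$, the sheaves parametrised by $\check A_t$ are the degree-zero line bundles $Q$ on $A_t$, and the operation $Q\mapsto Q\otimes\kp_t$ is exactly translation by the point $[\kp_t]\in\check A_t[r]$. This identifies the restriction of the constructed map to $B$ with the composite $R^1f_\ast\mu_r\cong\underline{\check A}[r]\hookrightarrow\underline{\check A}\cong(R^1f_\ast\Gm)^0\cong\underline{\Aut}^0(\check A/B)$, as demanded.

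The hard part is not the mere existence of the map but keeping control of it over the whole base $\PP$, where $\km$ is in general singular and only a coarse, non-fine moduli space. Two points require care. First, one must know that the torsion classes occurring in the stalks of $R^1h_\ast\mu_r$ are genuinely represented by line bundles on the possibly reducible fibres $X_t$; this is guaranteed because the Kummer boundary lands in $\Pic(X_t)[r]$, whose elements are honest line bundles. Second, one must ensure that $\kf\otimes\kp_t$ does not leave the chosen irreducible component $\km$, which is exactly where the numerical triviality of ${\rm c}_1(\kp_t)$ on each component enters and why the subvariety $P\subset\km$ was singled out. The remaining, more delicate issue of lifting this datum to the theta-group extension {\rm(\ref{eqn:ses})} is what later forces the passage to the subsheaf $\kk_r\subset R^1h_\ast\mu_r$, but it does not obstruct the construction of the map itself.
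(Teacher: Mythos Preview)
Your proposal is correct and follows essentially the same approach as the paper: realise $R^1h_\ast\mu_r$ as the sheafification of $U\mapsto H^1(X_U,\mu_r)$, map these classes to torsion line bundles in $\Pic(X_U)$ via the Kummer sequence, observe that torsion line bundles are numerically trivial on fibre components and hence lie in the group scheme $P$, and then invoke the tensor-product action $P\times_\PP\km\to\km$. Your write-up is in fact more detailed than the paper's short paragraph, in particular in spelling out why the map is well defined despite the $h_\ast\Gm/r$ ambiguity and in checking the compatibility over $B$.
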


\begin{proof} The sheaf $R^1h_\ast\mu_r$ is the sheafification of the pre-sheaf $U\mapsto H^1(X_U,\mu_r)$.
Since the image of the natural map $H^1(X_U,\mu_r)\to H^1(X_U,\GG_m)\cong\Pic(X_U)$ parametrises only torsion line
bundles, it is contained in the group scheme $P_U\to U$. The 
 action of the group scheme $P$ on $\km$, then induces an action of $H^1(X_U,\mu_r)$ on $\km_U$ and, therefore, a map (\ref{eqn:map}).
\end{proof}

In the analytic topology, we can view $\mu_r$ as a quotient
$0\to\ZZ\to \frac{1}{r}\ZZ\to \mu_r\to 0$ which allows us to define
a subsheaf  $\kk_r\subset R^1h_\ast\mu_r$ as the kernel of the boundary map,
in other words $$\kk_r\coloneqq\ker\left(R^1h_\ast\mu_r\to R^2h_\ast\ZZ\right)\cong \coker\left(R^1h_\ast\ZZ\to R^1h_\ast((1/r)\ZZ)\right).$$
Composing the inclusion with (\ref{eqn:map}) produces
\begin{equation}\label{eqn:map2}
\kk_r\to \underline\Aut(\km/\PP).
\end{equation}

Let us now fix a positive integer $r$ coprime to $\chi(A_t,\kl_t)$, which will later be the
period of the Brauer class, and let $M$ be an ample line bundle on the 
moduli space $\km$ as in Proposition \ref{prop:lbM} with $m_1$ coprime to $r$.
Then we consider the sheaf of automorphisms of $\km\to \PP$ together with the polarisation $M^r$ and its natural projection
\begin{equation}\label{eqn:mapp}
\pi\colon\underline\Aut((\km,M^r)/\PP)\to \underline\Aut(\km/\PP).
\end{equation}
As $\PP$ is normal, the morphism $\km\to \PP$ is connected and,
hence, the kernel of (\ref{eqn:mapp}) is $\Gm$. Thus,
 we have a short exact sequence
 \begin{equation}\label{eqn:mapses}
 \xymatrix{0\ar[r]&\Gm\ar[r]& \underline\Aut((\km,M^r)/\PP)\ar[r]&\im(\pi)\ar[r]&0.}
 \end{equation}
 
 \begin{lem}
The image of {\rm (\ref{eqn:map2})} is contained in the image of {\rm (\ref{eqn:mapp})}. 
\end{lem}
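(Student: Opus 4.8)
The plan is to unwind the map (\ref{eqn:map2}) geometrically and thereby reduce the statement to the relative theta-group fact that translation by $r$-torsion preserves $M^r$. By the preceding lemma, a local section of $R^1h_\ast\mu_r$ over some $U\subseteq\PP$ is represented by a torsion line bundle $\xi$ on $X_U=h^{-1}(U)$, and the induced element of $\underline\Aut(\km/\PP)$ is the tensor automorphism $t_\xi\colon[F]\mapsto[F\otimes\xi]$. The extra condition defining $\kk_r=\ker(R^1h_\ast\mu_r\to R^2h_\ast\ZZ)$ says precisely that the first Chern class of $\xi$ vanishes on every fibre $X_t$; hence $\xi_t\in\Pic^0(A_t)=\check A_t$ and, being $r$-torsion, $\xi_t\in\check A_t[r]$. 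Thus the image of (\ref{eqn:map2}) is given by translations of $\km$ by $r$-torsion sections of the group scheme $\check A\subset P$, and the lemma amounts to showing that such a $t_\xi$ preserves the polarisation, i.e.\ $t_\xi^\ast M^r\cong M^r$ over $\km_U$ after possibly shrinking $U$.

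First I would check this fibrewise. On the abelian variety $\check A_t$ the automorphism $t_\xi$ is translation by $\xi_t\in\check A_t[r]$, and $t_a^\ast M_t^r\otimes M_t^{-r}=\varphi_{M_t^r}(a)=\varphi_{M_t}(ra)$ for $a\in\check A_t$; since $\check A_t[r]\subseteq\ker(\varphi_{M_t^r})$ --- exactly the inclusion already used in connection with (\ref{eqn:sesAV}) --- this vanishes. So $t_\xi^\ast M^r\otimes M^{-r}$ restricts to $\ko$ on the dense abelian part of each fibre. To promote this to the whole (compactified, possibly singular) fibre $\km_t$, I would use that $a\mapsto t_a^\ast M\otimes M^{-1}$ is a homomorphism from $\check A_t$ to the translation-invariant line bundles on $\km_t$, a theorem-of-the-square type statement for the $\check A_t$-action that is cleanest from the determinant-line-bundle description of $M$ together with $(t_a\times\id)^\ast\ke\cong\ke\otimes p_2^\ast\xi$. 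For $a$ of order dividing $r$ this forces $t_a^\ast M^r\otimes M^{-r}=(t_a^\ast M\otimes M^{-1})^{\otimes r}\cong\ko$ on all of $\km_t$. Hence $t_\xi^\ast M^r\otimes M^{-r}$ is trivial on every fibre of $\km_U\to U$, so by the seesaw principle it is the pullback of a line bundle on $U$, which becomes trivial after shrinking $U$. This yields $t_\xi^\ast M^r\cong M^r$ and exhibits $t_\xi$ as a section of $\underline\Aut((\km,M^r)/\PP)$, i.e.\ in the image of (\ref{eqn:mapp}).

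The main obstacle is this last step on the singular fibres. Over the locus where $h$ has smooth fibres one has $\km_t=\check A_t$ and the assertion is literally the classical inclusion $\check A_t[r]\subseteq\ker(\varphi_{M_t^r})$; the difficulty is concentrated over the discriminant in $\PP$, where $\km_t$ properly contains $\check A_t$, may be singular, and where $\km$ need not be a fine moduli space. There the naive Chern-class computation only gives that $t_\xi^\ast M^r\otimes M^{-r}$ is \emph{numerically} trivial, since $r\,c_1(\xi)=0$, and the real work is to upgrade this to an actual isomorphism. I expect this to be handled by the determinant-of-cohomology formalism applied to the whole relative moduli space: as $M$ is defined by a universal determinant formula valid on all of $\km$ and the $P$-action is globally defined, the discrepancy $t_a^\ast M\otimes M^{-1}$ lands in the translation-invariant part of $\Pic(\km_t)$ for every $t$, and the homomorphism property then kills its $r$-th power. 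This is the exact analogue, transported from the abelian variety $\check A$ to its modular compactification $\km$, of the theta-group input behind the sequences (\ref{eqn:theta2}) and (\ref{eqn:sesAV}).
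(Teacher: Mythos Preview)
Your reformulation is correct: the lemma amounts to showing that $t_\xi^\ast M^r\cong M^r$ on $\km_U$ for every $r$-torsion line bundle $\xi$ on $X_U$ coming from $\kk_r$. Over the smooth locus $B$ your argument is the same as the paper's and is fine.

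The gap is over the discriminant, which you yourself flag. There your proposed route is a theorem-of-the-square for the $\check A_t$-action on the compactified and possibly singular fibre $\km_t$, but you only ``expect'' this to work and do not prove it. More importantly, your argument for the singular fibres uses only that $\xi$ is $r$-torsion (via $r\,{\rm c}_1(\xi)=0$), which is a statement about all of $R^1h_\ast\mu_r$ and never distinguishes $\kk_r$. Since the paper explicitly says one cannot expect the conclusion for the full $R^1h_\ast\mu_r$, this is a sign that the decisive idea is missing from your proof.

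The paper's argument is different and simpler. The defining condition of $\kk_r$ is that ${\rm c}_1(\xi)=0$ in $H^2(X_U,\ZZ)$, and via the exponential sequence this is precisely the obstruction to deforming $\xi$ continuously, through analytic line bundles, to the trivial bundle $\ko_{X_U}$. Running that deformation, the automorphism $t_\xi$ of $\km_U$ deforms to the identity, so $t_\xi^\ast M^r$ and $M^r$ are numerically equivalent on \emph{every} fibre $\km_t$, including the singular ones, without any square-theorem on $\km_t$. Combined with the already-known isomorphism over the smooth locus, this forces $t_\xi^\ast M^r\cong M^r$ on $\km_U$. In short, the paper trades your fibrewise algebraic argument on the compactified Jacobian for a topological deformation argument that consumes exactly the hypothesis $\kk_r$ provides.
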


\begin{proof} Ideally one would like to prove that already the image of (\ref{eqn:map}) is contained in the image of (\ref{eqn:mapp}), but a priori there is an obstruction to this 
and we will show that it vanishes on the subsheaf $\kk_r$.\smallskip

Using the arguments and the notation in the proof of the previous lemma, it would be enough to show that tensor product with an $r$-torsion line bundle $K\in H^1(X_U,\Gm)$ preserves $L^r$. 
For this, observe that the pull-back $(K\otimes(~~))^\ast L^r$ of $L^r$ under $K\otimes(~~) \colon \km_U\to\km_U$ and the original line bundle $L^r$ are isomorphic  over the pre-image of $U\cap B\subset U$, see the proof of Proposition \ref{prop:lbabscheme}.

In order to conclude that they
are isomorphic on all of $\km_U$ it suffices to argue that they are numerically equivalent also
on the fibres $\km_t$ for $t\in U\,\setminus\, B$. Now, if the torsion line bundle $K\in \Pic(X_U)$ could be continuously deformed (through analytic line bundles) to the trivial line bundle, this would be immediate. By
means of the exponential sequence, we know that the obstruction to do this in the analytic category is the first Chern class ${\rm c}_1(K)\in H^2(X_U,\ZZ)$, but by definition  this
is guaranteed for all local sections of the subsheaf $\kk_r\subset R^1h_\ast\mu_r$.
\end{proof}

This allows us to pull-back (\ref{eqn:mapses}) via (\ref{eqn:map}) to obtain a short exact
sequence which we write as
$$ \xymatrix{0\ar[r]&\Gm\ar[r]& \underline\Aut((\km,M^r)/\PP)[r]\ar[r]&\kk_r\ar[r]&0.}$$
The restriction to $B$ gives back (\ref{eqn:ses1}) and this concludes the discussion of (i).\smallskip


Let us  now turn to (ii), so we want to show that the class $\alpha'\in H^1(B,R^1f_\ast\mu_r)$
is in the image of the restriction map
$$H^1(\PP,\kk_r)\to H^1(B,\kk_r|_B\cong R^1f_\ast\mu_r).$$
Clearly, then the obstruction class $o(\alpha,L^r)\in H^2(B,\Gm)$ is in the image of the restriction map $0=H^2(\PP,\Gm)\to H^2(B,\Gm)$ and hence trivial.
\smallskip

Recall that $\alpha'\in H^1(B,R^1f_\ast\mu_r)$ was any pre-image of $\alpha\in H^1(B,\underline{\check A})$ under the natural map induced by the inclusion $R^1f_\ast\mu_r\cong\underline{\check A}[r]\subset\underline{\check A}$. We shall now
explain that in the hyperk\"ahler setting there is a natural choice for $\alpha'$ that comes
from a global class on $X$ which then implies that $\alpha'$ is in fact the restriction of a class
in $H^1(\PP,\kk_r)$.\smallskip

For this we go back to the exponential sequence in \S\! \ref{sec:setup}. It
shows that the $r$-torsion part of the connected component of the Brauer group
can be identified as
$$\Br(X)^0[r]\cong \left((1/r)T(X)'\right)/\,T(X)'.$$

Here, $T(X)'$ is the cokernel of the inclusion ${\rm NS}(X)\,\hookrightarrow H^2(X,\ZZ)$
which contains the transcendental lattice $T(X)={\rm NS}(X)^\perp\subset H^2(X,\ZZ)$ as a
finite index subgroup. As the transcendental lattice is an irreducible
Hodge structure and the smooth fibres $X_t\subset X$ are Lagrangians,
the restriction $T(X)\to H^2(X_t,\ZZ)$ is trivial. Hence, the image of $T(X)\to H^0(\PP,R^2h_\ast \ZZ)$ contains only sections with support in the discriminant locus $\PP\,\setminus\, B\subset \PP$.
Presumably, the image is actually trivial but in any case it is torsion. Indeed, 
the kernel $T(X)_0\coloneqq\ker(T(X)\to H^0(\PP,R^2h_\ast\ZZ))$ is a non-trivial
sub-Hodge structure, cf.\ \cite[Prop.\ 4.5]{AbRo}, and since $T(X)\otimes\QQ$ is an irreducible
Hodge structure and so $T(X)_0\otimes\QQ=T(X)\otimes\QQ$, the inclusion $T(X)_0\subset T(X)$ has finite index. Altogether, this defines a sub-lattices $T(X)_0\subset T(X)'$ of  finite index 
and if $r$ is coprime to this index, then 
$$\Br(X)^0[r]\cong \left((1/r)T(X)'\right)/\,T(X)'\cong\left((1/r)T(X)_0\right)/\,T(X)_0.$$

We continue to work with $T(X)_0$ and use  the Leray spectral sequence to define the  maps
$$T(X)_0\to H^1(\PP, R^1h_\ast\ZZ) \text{ and } (1/r)T(X)_0\to H^1(\PP, (1/r) R^1h_\ast\ZZ), $$
which immediately yield the map
$$\left((1/r)T(X)_0\right)/\,T(X)_0\to H^1(\PP,\kk_r).$$
All that is left to do is to show the following.

\begin{lem}\label{lem:liftT0}
If $\alpha\in\Br(X)^0[r]$  lifts to $\beta\in (1/r)T(X)_0$, then the lift
$\alpha'\in H^1(B,R^1f_\ast\mu_r)$ of $\alpha\in H^1(B,\underline{\check{A}})$ can be chosen to be the image of $\beta$
under $(1/r)T(X)_0\to H^1(\PP,\kk_r)\to H^1(B,R^1h_\ast\mu_r)$.
\end{lem}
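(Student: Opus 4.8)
The plan is to deduce the equality of the two lifts by tracing a single class on $X$ with $\mu_r$-coefficients through the maps induced by the exponential sequence and by the Leray spectral sequences of $h\colon X\to\PP$ and of its smooth part $f\colon A\to B$. Working in the analytic topology, the essential input is the commutative square of short exact sequences of sheaves on $X$ relating $0\to\ZZ\to(1/r)\ZZ\to\mu_r\to0$ to the exponential sequence $0\to\ZZ\to\ko_X\to\ko_X^\ast\to0$, with vertical maps the inclusion $(1/r)\ZZ\hookrightarrow\ko_X$ and the inclusion of roots of unity $\mu_r\hookrightarrow\ko_X^\ast$. The class $\beta\in(1/r)T(X)_0\subset H^2(X,(1/r)\ZZ)$ maps to a class $\bar\beta\in H^2(X,\mu_r)$, and by commutativity of this square its image in $H^2(X,\ko_X^\ast)$ is exactly $\alpha$; this is precisely the identification $\Br(X)^0[r]\cong\left((1/r)T(X)_0\right)/T(X)_0$ of \S\!~\ref{sec:setup} read through $\mu_r$. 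The whole argument then reduces to showing that the two constructions of the lift are both governed by $\bar\beta$.

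First I would treat the global side. Because $T(X)_0=\ker\left(T(X)\to H^0(\PP,R^2h_\ast\ZZ)\right)$, the class $\beta$ lies in the first step $F^1H^2(X,(1/r)\ZZ)$ of the Leray filtration for $h$, hence so does $\bar\beta$. The Leray edge map therefore carries $\bar\beta$ into $H^1(\PP,R^1h_\ast\mu_r)$, and since it factors through $R^1h_\ast(1/r)\ZZ$, its image lands in the subsheaf $\kk_r=\coker\left(R^1h_\ast\ZZ\to R^1h_\ast(1/r)\ZZ\right)$. I would then check that the resulting class in $H^1(\PP,\kk_r)$ is the one produced by the construction $\beta\mapsto H^1(\PP,(1/r)R^1h_\ast\ZZ)\to H^1(\PP,\kk_r)$ preceding the lemma: this is just the compatibility of the edge map for $(1/r)\ZZ$ with the edge map for $\mu_r$ under the quotient $(1/r)\ZZ\to\mu_r$, which is naturality of the spectral sequence.

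Next I would restrict to $B$ and match with the Weil--Ch\^atelet class. Restriction along $B\hookrightarrow\PP$ commutes with Leray edge maps, and $\kk_r|_B\cong R^1f_\ast\mu_r$ since over $B$ the fibres are abelian varieties, so $R^2f_\ast\ZZ$ is torsion free and the boundary $R^1f_\ast\mu_r\to R^2f_\ast\ZZ$ vanishes. Thus the restriction of the global class is the edge image of $\bar\beta|_A$ under $H^2(A,\mu_r)\to H^1(B,R^1f_\ast\mu_r)$. On the other hand, the Weil--Ch\^atelet class $\alpha\in H^1(B,\underline{\check A})$ is the edge image of $\alpha|_A\in\Br_1(A)$ under the Leray map $H^2(A,\Gm)\to H^1(B,R^1f_\ast\Gm)$ for $f$ with $\Gm$-coefficients, followed by the projection onto $(R^1f_\ast\Gm)^0=\underline{\check A}$ of (\ref{eqn:WC}). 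Applying naturality of the edge maps to the coefficient inclusion $\mu_r\hookrightarrow\ko_X^\ast$, together with $\bar\beta|_A\mapsto\alpha|_A$ and the identification $R^1f_\ast\mu_r\cong\underline{\check A}[r]\hookrightarrow\underline{\check A}$, I conclude that the edge image of $\bar\beta|_A$ maps to the Weil--Ch\^atelet class $\alpha$. This is exactly the assertion that the global image is a valid choice of $\alpha'$.

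The main obstacle I anticipate is the bookkeeping in this naturality argument: one must commute Leray edge maps simultaneously against the coefficient maps $(1/r)\ZZ\to\mu_r\hookrightarrow\ko_X^\ast$ and against the restriction $B\hookrightarrow\PP$, all while keeping the identification $R^1f_\ast\mu_r\cong\underline{\check A}[r]$ compatible with $(R^1f_\ast\Gm)^0\cong\underline{\check A}$. The genuinely delicate point is the identification of the Brauer-to-Weil--Ch\^atelet map with the Leray edge map in $\Gm$-coefficients, which is what allows the single class $\bar\beta$ to control both constructions; once this is set up, the equality of the two lifts is forced by functoriality and by the vanishing of $\beta$ in $H^0(\PP,R^2h_\ast\ZZ)$.
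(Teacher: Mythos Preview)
Your proposal is correct and follows essentially the same approach as the paper: both arguments reduce the lemma to functoriality of the Leray spectral sequence under the coefficient maps $(1/r)\ZZ\to\mu_r\hookrightarrow\ko_X^\ast$ and under restriction $B\hookrightarrow\PP$. The paper compresses this into a single commutative diagram and a one-line appeal to functoriality, whereas you spell out the individual naturality checks (edge maps, the identification $R^1f_\ast\mu_r\cong\underline{\check A}[r]$, the vanishing in $H^0(\PP,R^2h_\ast\ZZ)$ placing $\beta$ in the right Leray filtration step); the content is the same.
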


\begin{proof}
This is a consequence of the commutativity of the following diagram:
$$\xymatrix@R=18pt@C=15pt{(1/r)T(X)_0\ar[d]\,\ar[r]&\ker\ar[d]\ar[dr](H^2(X,\mu_r)\ar[r]& H^0(\PP,R^2h_\ast\mu_r))\\
H^1(\PP,\kk_r)\ar[r]&H^1(\PP,R^1f_\ast\mu_r)\ar[d]& H^2(X,\Gm)^0\ar[d]\\
&H^1(B,R^1f_\ast\mu_r)\ar[r]&H^1(B,R^1f_\ast\Gm)}$$
which follows from the functoriality of the Leray spectral sequence.
\end{proof}
\subsection{Rigidifying the twists}\label{sec:rigidify}
The last step in setting up the moduli spaces for our purposes is to rigidify the twisted Picard varieties $A_\alpha$.\smallskip

We use once again that Brauer classes on a hyperk\"ahler manifold $X$ automatically vanish on  Lagrangian subvarieties. We us a result of Lin \cite{Lin},\footnote{Thanks to C.\ Voisin for reminding me of Lin's result.} according to which there exists a constant cycle multi-section
$\tilde \PP\subset X$ of the projection $X\to \PP^n$, i.e.\ all closed points of $\tilde \PP$ define the same class in $\CH_0(X)$. Clearly, every actual section is automatically Lagrangian (and constant cycle). To avoid working with the a priori singular $\tilde\PP$, we
pass to a desingularisation. Now apply the Bloch--Srinivas principle, cf.\ \cite[Prop.\ 22.24]{VoisinHodge}, which for a constant
cycle subvariety like $\tilde \PP$ shows that the restriction map $H^2(X,\ko)\to H^2(\tilde \PP,\ko)$ is automatically trivial, which can also be expressed by saying that $\tilde \PP\to X$ is  Lagrangian.  Hence, $\Br(X)^0\to \Br(\tilde \PP)$ is trivial. \smallskip

This global information ensures that for the multi-section $\tilde B\coloneqq \tilde\PP\cap A$ 
of $A\to B$ the composition $\Br(X)^0\to\Br(A)\to \Br(\tilde B)$ is also trivial. Thus, $\tilde B$ can be used to rigidify all twisted
Picard varieties $A_\alpha\to B$ for $\alpha\in \Br(A)$ obtained by restricting classes in $\Br(X)^0$. Let us spell this out in more detail:
If we denote by $\tilde A$ and $\tilde A_\alpha$ the base changes $ A\times_B\tilde B$ resp.\ 
$A_\alpha\times_B\tilde B$, then there exists a Poincar\'e bundle: $$\kp\to A_\alpha\times_B\tilde A\cong \tilde A_\alpha\times_{\tilde B}\tilde A.$$
By definition, $\kp$ is twisted with respect to the pull-back $(1,\alpha)$ of $\alpha$ via the
composition of the second projection with $\tilde A\to A$. Note that $\tilde A_\alpha$ is nothing
but the twist of the dual of $\tilde A\to \tilde B$ with respect to the pull-back $\tilde\alpha\in \Br(\tilde A)$ of $\alpha$,
i.e.\ $\tilde A_\alpha=\Pic_{\tilde\alpha}^0(\tilde A/\tilde B)$.\smallskip

Then, any object in $\Db(\tilde A_\alpha)$ can be used via the Fourier--Mukai equivalence
$$\Db(\tilde A_\alpha)\cong \Db(\tilde A,\tilde \alpha)$$ to produce
an $\tilde\alpha$-twisted sheaf (or rather a complex of twisted sheaves) on $\tilde A$. The direct image under
$\tilde A\to A$ then turns such an $\tilde\alpha$-twisted sheaf on $\tilde A$ into an $\alpha$-twisted sheaf on $A$. In the process, the rank gets multiplied by the degree of $\tilde A\to A$, which is of course just the degree of the multi-sections $\tilde B\to B$ or, equivalently, of $\tilde \PP\to\PP$, and independent of the particular $\alpha$.

\subsection{Conclusion}\label{sec:proof}
We now reassemble the pieces to conclude the proof of Theorem \ref{thm1}.\smallskip

Assume $\alpha\in \Br(X)$ has a period $r\coloneqq\per(\alpha)$ which is coprime to the order of the torsion of $H^3(X,\ZZ)$, then $\alpha\in \Br(X)^0$, cf.\ (\ref{eqn:Br0}) in \S\! \ref{sec:setup}.  If, in addition, $\per(\alpha)$ 
is coprime to the index of $T(X)_0\subset H^2(X,\ZZ)/\NS(X)$, then $\alpha$ is induced by
a class $\beta\in (1/r)T(X)_0$. Then, according to the discussion of (ii) in \S\! \ref{sec:GLUE} and especially Lemma \ref{lem:liftT0}, the obstruction $o(\alpha,L^r)\in H^2(B,\Gm)$ vanishes.
Here, $L$ is the restriction of an ample line bundle $M_\ell$ on a modular
compactification $\check A\subset\km$.  Let us add the hypothesis that $r$ is also coprime 
to $\chi(A_t,\kl_t)$, where $\kl$ is a fixed relative ample line bundle $\kl$ on $X$. We then 
choose  $\ell$  as in Proposition \ref{prop:lbM}, so that its degree $\chi(\check A_t,M_{\ell t})$ is coprime to $r$.\smallskip

Finally, we choose a Lagrangian multi-section $\tilde\PP\subset X$ of the Lagrangian
fibration $X\to \PP$, which always exists as explained in \S\! \ref{sec:rigidify}.
This produces a multi-section $\tilde B\subset A$ of the smooth part $f\colon A\to B$,
say of degree $k$, that has the property that for all $\alpha\in \Br(X)^0$ the restriction
 $\alpha|_{\tilde B}\in H^2(\tilde B,\Gm)$ is trivial. If we eventually
add the condition that $r$ is also coprime to $k$, then Corollary \ref{cor:nosec} allows us to conclude
\begin{equation}\label{eqn:final}
\ind(\alpha)\mid\per(\alpha)^g,
\end{equation}
where $g$ is the relative dimension of $A\to B$ which is nothing but $\dim(X)/2$.
Here, we are implicitly using that period and index of a Brauer class $\alpha$ 
on $X$ and of its restriction to the open part $A\subset X$ coincide.

Combining all the numerical assumptions on $r=\per(\alpha)$ we find that
(\ref{eqn:final}) holds if $r$ is coprime to
$$N_X=\chi(A_t,\kl)\cdot\deg (\tilde\PP/\PP)\cdot |H^3(X,\ZZ)_{\text{ tors}}|\cdot \ind (T(X)_0).$$

At this point it seems unknown whether $H^3(X,\ZZ)_{\text{ tors}}$ could ever be non-trivial. Also, I am not aware of an example with $T(X)_0$
strictly smaller than $T(X)$, so potentially the last factor might just be the index
of $T(X)\subset H^2(X,\ZZ)/\NS(X)$. The first two factors seem hard to avoid with our techniques, but should
of course eventually be superfluous as well.

\subsection{Covering families of Lagrangians}\label{sec:CoveringCubic}
In this last subsection we shall observe that similar techniques also apply to hyperk\"ahler
manifolds which may not admit a Lagrangian fibration but do come with a family of Lagrangian subvarieties. \smallskip

We will study this in the case of the Fano variety of lines. So we consider a smooth cubic fourfold
$Y\subset \PP^5$ and its Fano variety of lines $X\coloneqq F(Y)$, which is a projective
hyperk\"ahler manifold of dimension four, see \cite[Ch.\ 6]{HuyCubics} for a survey and
references.\smallskip

The complete family of hyperplane sections is a family $\ky\to|\ko(1)|\cong\PP^5$
of cubic threefolds
and its relative Fano variety of lines 
\begin{equation}\label{eqn:Fano}
F(\ky)\to |\ko(1)|
\end{equation} can also be viewed as a $\PP^3$-bundle
$F(\ky)\to X=F(Y)$ over $X$.  In particular,
$$\Br(F(\ky))\cong\Br(F(Y)).$$
As $F(Y\cap H)\subset F(X)$ is Lagrangian, this describes a covering family of Lagrangian surfaces and we might regard this as a generalisation of the case of Lagrangian fibrations and apply
the techniques developed earlier. So (\ref{eqn:Fano}) would then play the role of $X\to \PP$ and the analogue of the dual fibration  is provided
by the relative Picard scheme $$\Pic^0(F(\ky)/|\ko(1)|_{\text{sm}})\to |\ko(1)|_{\text{sm}},$$
which  is a principally polarised abelian scheme of relative dimension five.\smallskip

It is convenient to reduce the dimension of the family:
For a generic plane $\PP^2\cong P\subset|\ko(1)|$, one obtains a two-dimensional family of Fano surfaces $F(Y\cap H_t)$, $t\in P$, contained in $X$. The restriction $F(\ky)_P$ to $P$ is in fact birational to $X$
$$\xymatrix{P&\ar[l]_h\ar[r]^-\sim F(\ky)_P&X}$$
and, in particular, pulling back gives an isomorphism $\Br(X)\cong \Br(F(\ky)_P)$.\smallskip

\begin{prop}
Let $Y\subset \PP^5$ be a smooth cubic fourfold and let $X=F(Y)$ be its Fano variety of lines.
Then $$\ind(\alpha)\mid\per(\alpha)^5$$
for every class $\alpha\in \Br(X)$ with $\per(\alpha)$  coprime to the index of  the kernel of the restriction map $T(X)\to H^0(P, R^2h_\ast\ZZ)$ viewed as a sub-lattice of $H^2(X,\ZZ)/\NS(X)$.
\end{prop}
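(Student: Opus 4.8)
The plan is to transplant the apparatus of \S\! \ref{sec:HK} to the present geometry, with the Lagrangian fibration replaced by the surface family $h\colon F(\ky)_P\to P\cong\PP^2$ and with the dual abelian scheme replaced by the relative intermediate Jacobian. First I would use the birational identification of $F(\ky)_P$ with $X$ and the resulting isomorphism $\Br(X)\cong\Br(F(\ky)_P)$ to regard $\alpha$ as a class on the surface family, still written $\alpha$. Over the complement $B\coloneqq P\setminus\Delta$ of the discriminant curve $\Delta\subset P$, along which $Y\cap H_t$ acquires singularities, the relative Picard scheme $\check A\coloneqq\Pic^0(F(\ky)_P/B)\to B$ is a principally polarised abelian scheme of relative dimension $g=5$. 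Because each Fano surface $F(Y\cap H_t)$ is Lagrangian in $X$, the holomorphic two-form and hence the map $H^2(X,\ko)\to H^2(F(Y\cap H_t),\ko)$ restrict trivially; exactly as in \S\! \ref{sec:setup} this places $\alpha$ in the relevant subgroup and, through the Leray spectral sequence of $h$, produces a Weil--Ch\^atelet class in $H^1(B,(R^1h_\ast\Gm)^0)=H^1(B,\underline{\check A})$ together with a twist $\check A_\alpha\to B$.

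Granting the vanishing of the obstruction $o(\alpha,L^r)\in H^2(B,\Gm)$, where $L$ is the principal polarisation and $r=\per(\alpha)$, the gluing construction of \S\! \ref{sec:glueAS} (Proposition \ref{prop:lbabscheme}) furnishes a line bundle $M_\alpha$ on $\check A_\alpha$ with fibre Euler characteristic $\chi(\check A_{\alpha t},M_{\alpha t})=r^{5}\cdot\chi(\check A_t,L_t)=r^{5}$, the last equality holding because the polarisation is principal. Transforming $M_\alpha$ by the (rigidified, $\alpha$-twisted) Poincar\'e bundle on $\check A_\alpha\times_B F(\ky)_P$ yields an $\alpha$-twisted sheaf on $F(\ky)_P\cong X$ whose rank on a surface fibre equals $\dim R\Gamma\!\left(\check A_{\alpha t},M_{\alpha t}\otimes\xi\right)=r^{5}$, where $\xi$ is the degree-zero twist obtained by restricting the Poincar\'e bundle to $\check A_{\alpha t}\times\{\text{pt}\}$ and $M_{\alpha t}$ is a polarisation of that Euler characteristic with no higher cohomology. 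As period and index are unchanged under the birational identification, this gives $\ind(\alpha)\mid\per(\alpha)^{5}$. Two features simplify matters relative to Theorem \ref{thm1}: the intermediate Jacobian is canonically principally polarised, eliminating the fibre-degree factor $\chi(\check A_t,L_t)$, and $X=F(Y)$ is of $\mathrm{K3}^{[2]}$-type, so $H^3(X,\ZZ)$ is torsion free and $\Br(X)^0=\Br(X)$; the rigidification needed for the Poincar\'e bundle is supplied, as in \S\! \ref{sec:rigidify}, by a Lagrangian constant cycle multi-section, on which $\alpha$ restricts trivially by the Bloch--Srinivas argument.

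The heart of the proof, and its main obstacle, is the vanishing of $o(\alpha,L^r)\in H^2(B,\Gm)$, which is precisely where the coprimality hypothesis is consumed. Following \S\! \ref{sec:VanishingOC} the aim is to realise $o(\alpha,L^r)$ as the restriction of a class on all of $P$, whence it dies in $H^2(P,\Gm)=\Br(\PP^2)=0$. This demands two things: first, extending the theta-group sequence (\ref{eqn:ses1}) from $B$ across $\Delta$ to a short exact sequence on $P$ built from the subsheaf $\kk_r\subset R^1h_\ast\mu_r$, which in turn rests on compactifying $\check A$ to a projective moduli space $\km\to P$ of sheaves on $F(\ky)_P$ and extending the polarisation numerically, in the manner of \S\! \ref{sec:compactify}; and second, exhibiting a lift $\alpha'\in H^1(B,R^1h_\ast\mu_r)$ that descends from a global class, via the argument of Lemma \ref{lem:liftT0}. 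For the latter I would set $T(X)_0\coloneqq\ker\!\left(T(X)\to H^0(P,R^2h_\ast\ZZ)\right)$; the hypothesis that $\per(\alpha)$ is coprime to the index $[\,H^2(X,\ZZ)/\NS(X):T(X)_0\,]$ lets one write $\alpha$ as the image under the exponential sequence of a class $\beta\in(1/r)T(X)_0$, and $\beta$ maps to $H^1(P,\kk_r)$ restricting to the desired $\alpha'$. The two genuinely new difficulties beyond \S\! \ref{sec:HK} are that $F(\ky)_P$ is only birational to $X$, so one must verify that the Hodge-theoretic and Leray data transport faithfully across the birational map, and that the degeneration of the relative intermediate Jacobian over $\Delta$ must be accommodated by the compactification $\km\to P$; establishing the extended sequence and the global lift, so that $\Br(\PP^2)=0$ absorbs the obstruction, is the crux of the argument.
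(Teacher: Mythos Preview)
Your proposal is correct and follows essentially the same approach as the paper, which itself only sketches the argument by referring back to Proposition \ref{prop:lbM} and \S\! \ref{sec:VanishingOC}. You in fact spell out more than the paper does---the Fourier--Mukai pushforward producing the $\alpha$-twisted sheaf of rank $r^5$, the role of the principal polarisation in eliminating the fibre-degree factor, and the rigidification via a Lagrangian multi-section---all of which are implicit in the paper's sketch.
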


\begin{proof} Note that the result provides a uniform exponent but one that is larger than  predicted by
Conjecture \ref{conj2}.\footnote{As pointed out to me by D.\ Mattei, the result could also
be seen as a consequence of Theorem \ref{thm1} applied to the Lagrangian
fibration described by the family of intermediate Jacobians
of the smooth hyperplane sections $Y\cap H_t$, which compactifies to a smooth
hyperk\"ahler manifold $X\to \PP^5$ of OG10 type, cf.\ \cite{LSV,Sac}. The twists have also 
been studied in the recent paper by Dutta, Mattei, and Shinder \cite{DMS}.} As the main arguments are similar to the ones proving Theorem \ref{thm1},
we will only sketch the main steps. \smallskip

Assume $B\subset P$ is the open part that parametrises smooth hyperplane sections $Y\cap H_t$, $t\in P$ and let $\Pic^0(F(\ky)_B/B)\to B$ be the relative Picard scheme. \smallskip

Since the surfaces $F(Y\cap H)\subset X$ are Lagrangian and $H^3(X,\ZZ)$ is torsion free, every class $\alpha\in \Br(X)\cong\Br(F(\ky)_P)$ restricts trivially to the fibres
$F(Y\cap H_t)$, i.e.\ $\alpha\in \Br_1(F(\ky)_P)$, and, therefore, induces a twist
$\Pic^0_\alpha(F(\ky)_B/B)\to B$. As in \S\! \ref{sec:glueAS}, the  power $L^r$, $r=\per(\alpha)$, of the principal polarisation $L$ on $\Pic^0$ glues to a line bundle $M_\alpha$ on $\Pic_\alpha^0$ if
the obstruction $o(\alpha,L^r)\in H^2(B,\Gm)$ vanishes.\smallskip

Copying the idea in the previous sections, it is suffices to show that $o(\alpha,L^r)$ extends to a class on $\bar B=P\cong \PP^2$. Then we use that $\Br(\PP^2)$ and hence also $o(\alpha,L^r)$ are trivial. The arguments are identical to the ones proving Proposition \ref{prop:lbM}
and the main result in \S\! \ref{sec:VanishingOC}.
\end{proof}

\begin{remark}
(i) As in Remark \ref{rem:2gbis}, it is much easier to establish the bound
$\ind(\alpha)\mid\per(\alpha)^{10}$, at least for all classes $\alpha\in \Br(X)$ with
$\per(\alpha)$ coprime to the degree of an $\alpha$-trivialising multi-section
$\tilde P\subset F(\ky_P)$ of the projection to $P$. An explicit such multi-section
can be obtained as the strict transform  of a generic $F(Y\cap H_0)\subset F(Y)$, with $H_0\not\in P$,
under the birational map $F(\ky_P)\to X=F(Y)$. Its degree is $27$, cf.\ \cite[Thm.\ 6.4.1]{HuyCubics}.
\smallskip

(ii) The Fano variety $F(Y)$ of lines admits a covering family of curves of genus four, namely by the curves of lines passing through a fixed point in $Y$, cf.\ \cite[Ch.\ 5]{HuyCubics}. Applying the techniques of \cite{HuyMa} would then lead to $\ind(\alpha)\mid\per(\alpha)^{8}$. It seems feasible to get the exponent down from $2g=8$ to $g=4$, which would still be far off the conjectured exponent $2$ in this case.
\end{remark}

Similar arguments could be relevant in general. More precisely,
it is expected that every projective hyperk\"ahler manifold comes with
a covering family of Lagrangians $\xymatrix@C=15pt{T&\ar[r]\ar[l]\tilde X&X}$, cf.\  \cite{VoiLag}, which would be potentially useful  for the period-index problem with two obvious drawbacks:
\begin{enumerate}\setlength\itemsep{0.3em}
\item[(i)] Only
 Brauer classes of period coprime to the degree of the covering map $\tilde X\to X$ could be treated.
 \item[(ii)] The bound would look like $\ind(\alpha)\mid\per(\alpha)^{2g}$,
 where $g=h^{1,0}(\tilde X_t)$, which might be much worse than what is predicted
 by Conjectures \ref{conj1} \& \ref{conj2}.
 \end{enumerate}

 Since the parameter space $T$ may have non-trivial Brauer group, the obstruction class
 $o(\alpha,L^r)\in \Br(T)$ discussed in length in \S\! \ref{sec:VanishingOC} might simply not
 be trivial. If it is, the exponent in (ii) could be lowered to $g$.\smallskip
 
A similar observation can be found in \cite[\S\! 2.3]{HuyMa} for K3 surfaces 
using a covering family of elliptic curves which always exist.

\section{Period-index for Hilbert schemes and moduli spaces}
In this section we gather more evidence for Conjecture \ref{conj2}. In particular, we shall prove Theorem \ref{thm2} in \S\! \ref{sec:Hilb}. However, we begin by discussing the
period-index problem for families of curves in \S\! \ref{sec:Jac} building upon the 
discussion in \S\! \ref{sec:PIAbsch}. This has applications to certain moduli spaces of torsion sheaves on K3 surfaces to be discussed in \S\! \ref{sec:ModuliK3curves}, proving Theorem  \ref{thm1} with $N_X=1$. It also provides the opportunity to introduce a technique,
see Lemma \ref{lem:Jaco}, that is then later also used for Hilbert schemes.
We also include short discussions of surface decomposable hyperk\"ahler manifolds \S\!  \ref{sec:surfdecomp} and of the special case of Hilbert schemes of an elliptic K3 surface \S\! \ref{sec:HilbLag}.

\subsection{Period-index for Jacobian fibrations}\label{sec:Jac}
Motivated by the application to Lagrangian fibrations, we have so far considered abelian schemes
$A\to B$ or families that become abelian schemes after generically finite base change, and then twisted the dual abelian scheme $\check A\to B$ by means
of Brauer classes $\alpha\in \Br_1(A)$. We did observe several times that the abelian scheme structure of $A\to B$ was less important than the one of $\check A\to B$ and
will now study a situation where there is no $A\to B$ but only $\check A\to B$. \smallskip

For simplicity, we will consider the easiest and geometrically most meaningful special case: We consider a family of smooth projective curves $f\colon\kc\to B$ of genus $g$ over a quasi-projective variety $B$ over an algebraically closed field of characteristic zero.
The relative Picard scheme$$\check f\colon \Pic(\kc/B)\to B,$$ or rather its identity
component $\check f\colon \Pic^0(\kc/B)\to B,$ will henceforth play the role of the dual abelian
variety $\check A\to B$. The other components will be denoted $\Pic^d(\kc/B)\to B$
and they are naturally torsors for $\Pic^0(\kc/B)\to B$.

Let us now assume that $f$ admits a section. We fix one and denote it by $s\colon B\to \kc$. Then \smallskip

\begin{enumerate}\setlength\itemsep{0.3em}
\item[(i)] All the torsors $\Pic^d(\kc/B)\to B$ are trivial, i.e.\ 
  $\Pic^d(\kc/B)\cong\Pic^0(\kc/B)$ over $B$.
\item[(ii)] There exists a Poincar\'e bundle $\kp\to \Pic(\kc/B)\times_B\kc$.
\end{enumerate}\smallskip

In this case, $\Pic^0(\kc/B)\to B$ comes with a principal polarisation which can
be used to identify it with its own dual. In this sense, it plays the role of $A\to B$ and $\check A\to B$, 
simultaneously.\smallskip

Recall that the map $(x_1,\ldots,x_g)\mapsto \ko(\sum x_i)$ defines a birational
morphism 
\begin{equation}\label{eqn:JacAb}
\kc^{(g)}\to \Pic^g(\kc/B)\cong\Pic^0(\kc/B).
\end{equation}
Here, $\kc^{(d)}$ denotes the relative symmetric product $(\kc\times_B\cdots\times_B\kc)/{\mathfrak S}_d$. The map (\ref{eqn:JacAb}) can be viewed as the relative Albanese
morphism $\kc^{(g)}\to\text{Alb}(\kc^{(g)}/B)\cong\text{Alb}(\kc/B)\cong\Pic^0(\kc/B)$,
where the various identifications all depend on the choice of the section $s$.

Since (\ref{eqn:JacAb}) is a birational projective morphism between smooth varieties, 
it induces an isomorphism
$$\Br(\kc^{(g)})\cong \Br(\Pic^0(\kc/B)).$$
In fact, adapting \cite[\S\! 4]{Biswas} and \cite[Thm.\ 1.2]{IJ} to the relative setting, one finds that 
$$\Br(\kc^{(d)})\cong \Br(\Pic^0(\kc/B))$$
for all $d\geq2$.

We will only deal with Brauer classes that are trivial along the prefixed sections. 
This resembles the choice of a Lagrangian (multi-)section in the hyperk\"ahler setting.
We define $$\Br(\kc^{(d)})_s\coloneqq\ker\left(\Br(\kc^{(d)})\to \Br(s(B))\right)$$
and $$\Br(\Pic^{(d)}(\kc/B))_s\coloneqq\ker\left(\Br(\Pic^{(d)}(\kc/B))\to\Br(s(B))\right),$$
where we denote by $s$ also the sections  $B\to \kc^{(d)}$, $t\mapsto (s(t),\ldots,s(t))$, and $B\to\Pic^d(\kc/B)$, $t\mapsto \ko(ds(t))$, of $\,\kc^{(d)}\to B$ resp.\ $\Pic^d(\kc/B)\to B$.

As the last piece of notation, we introduce the  subgroups 
$$\Br_1(\kc^{(d)})_s\subset \Br(\kc^{(d)})_s\text{ and }\Br_1(\Pic^{d}(\kc/B))_s\subset \Br(\Pic^{d}(\kc/B))_s$$ 
of all elements that are trivial on the geometric fibres.

\begin{lem}\label{lem:Jaco}
The map $\Br(\kc)\to\Br(\kc^{(g)})$, $\alpha\mapsto \alpha^{(g)}\coloneqq\prod_{i=1}^g{\rm pr}_i^\ast\alpha$ induces an isomorphism 
$$\Br(\kc)_s\congpf \Br_1(\kc^{(g)})_s\cong\Br_1(\Pic^g(\kc/B))_s.$$
\end{lem}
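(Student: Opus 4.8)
The plan is to prove the two isomorphisms separately, deriving the comparison $\Br_1(\kc^{(g)})_s\cong\Br_1(\Pic^g(\kc/B))_s$ formally and concentrating the real work on the map $\alpha\mapsto\alpha^{(g)}$. For the second isomorphism I would invoke the already-recorded isomorphism $\Br(\kc^{(g)})\cong\Br(\Pic^g(\kc/B))$ coming from the Albanese morphism $\kc^{(g)}\to\Pic^g(\kc/B)$, a birational projective morphism over $B$ between smooth varieties, and check only that it respects the two decorations. Geometric triviality is preserved because the morphism is fibrewise birational and $\Br_1$ is defined by restriction to geometric fibres; and $s$-triviality is preserved because the diagonal section $t\mapsto(s(t),\dots,s(t))$ is carried by the Albanese morphism precisely to the section $t\mapsto\ko(g\,s(t))$ of $\Pic^g(\kc/B)$, so the two restriction maps to $\Br(s(B))$ agree.

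First I would check that $\alpha\mapsto\alpha^{(g)}$ is well defined, i.e.\ that the $\mathfrak{S}_g$-invariant class $\prod_{i=1}^g{\rm pr}_i^\ast\alpha$ on $\kc\times_B\cdots\times_B\kc$ descends to the smooth variety $\kc^{(g)}$. I would do this through the universal divisor $\mathcal{D}\subset\kc^{(g)}\times_B\kc$, which is smooth since $(x,D')\mapsto(x+D',x)$ identifies it with $\kc\times_B\kc^{(g-1)}$, and which is finite flat of degree $g$ over $\kc^{(g)}$ with evaluation morphism ${\rm ev}\colon\mathcal{D}\to\kc$. The corestriction $\mathrm{cor}_{\mathcal{D}/\kc^{(g)}}({\rm ev}^\ast\alpha)$ along the generically Galois cover restricts on $\kc\times_B\cdots\times_B\kc$ to $\prod_i{\rm pr}_i^\ast\alpha$; since ${\rm ev}^\ast\alpha$ is unramified on the smooth $\mathcal{D}$, its corestriction has vanishing residues, so by purity for Brauer groups it extends to an honest class $\alpha^{(g)}\in\Br(\kc^{(g)})$. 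That the image lands in $\Br_1(\kc^{(g)})_s$ is then immediate: the restriction to a geometric fibre is $\prod_i{\rm pr}_i^\ast(\alpha|_{\kc_t})=0$ by Tsen's theorem, and the restriction to the diagonal section is $(s^\ast\alpha)^g=0$.

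The heart is bijectivity, which I would establish via the Leray spectral sequences for $f\colon\kc\to B$ and for $\check A\to B$, where $\check A\coloneqq\Pic^0(\kc/B)$ is self-dual through its principal polarisation. Since the fibres of $f$ are curves, Tsen gives $\Br(\kc)=\Br_1(\kc)$, and the section $s$ splits off $\Br(B)$, so $\Br(\kc)_s$ embeds into the Weil--Ch\^atelet group $H^1(B,\underline{\check A})$; the same sequence for the self-dual $\check A\to B$ embeds $\Br_1(\Pic^0(\kc/B))_s$ into the same $H^1(B,\underline{\check A})$. The decisive geometric input is $H^1(\kc^{(g)}_t)\cong H^1(\kc_t)$, which forces $(R^1f_\ast\Gm)^0=\underline{\check A}=(R^1\pi_\ast\Gm)^0$ and makes the external product induce the identity on $H^1(B,\underline{\check A})$; note in particular that the transcendental Brauer classes of $\kc^{(g)}$ sit in the $H^1(B,\underline{\check A})$-part, while the extra $\wedge^2$-summand of $R^2\pi_\ast$ is exactly what $\Br_1$ kills. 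Concretely, the coordinate embedding $\iota\colon\kc\to\kc^{(g)}$, $x\mapsto x+(g-1)s$, satisfies $\iota^\ast\alpha^{(g)}=\alpha$, giving injectivity and a candidate inverse $\beta\mapsto\iota^\ast\beta$.

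The main obstacle I anticipate is precisely this Weil--Ch\^atelet comparison for surjectivity: one must verify that the two Leray differentials $d_2\colon H^1(B,\underline{\check A})\to H^3(B,\Gm)$, for $f$ and for the relative Picard scheme, have the same kernel, so that the images of $\Br(\kc)_s$ and of $\Br_1(\Pic^0(\kc/B))_s$ genuinely coincide. I expect this to follow from the functoriality of the Leray sequence under the Albanese morphism together with $H^1(\kc^{(g)}_t)\cong H^1(\kc_t)$, but it is the step requiring the most care, in particular the compatible bookkeeping of the torsion contributed by $H^3(-,\ZZ)$, which the purely Hodge-theoretic picture does not see directly.
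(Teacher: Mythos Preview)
Your proposal is correct and follows essentially the same strategy as the paper: a Leray spectral sequence comparison over $B$ together with the splitting $\iota^\ast\alpha^{(g)}=\alpha$ via the section-embedding $\iota\colon\kc\hookrightarrow\kc^{(g)}$. The paper organises the argument slightly differently in a way that directly dissolves your stated concern about the $d_2$'s. Rather than comparing $\kc^{(g)}$ with $\Pic^0$ head-on, it first proves $\Br(\kc)_s\cong\Br_1(\Pic^1)_s$ via the Abel--Jacobi embedding $\kc\hookrightarrow\Pic^1$: the induced map of Leray sequences has $R^1\check f_\ast\Gm\cong R^1f_\ast\Gm$ in the middle and the \emph{identity} on $\Br(B)$ and on $H^3(B,\Gm)$ at the ends, so the two $d_2$'s are literally the same map. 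The passage to higher $d$ is then a chain of restriction maps along $\kc^{(d)}\hookrightarrow\kc^{(d+1)}$ and $\Pic^d\hookrightarrow\Pic^{d+1}$ (adding $s$), with all horizontal arrows in the $\Pic$-row isomorphisms and the outer vertical arrows isomorphisms; the splitting $\alpha\mapsto\alpha^{(g)}$ then finishes. Your corestriction construction of $\alpha^{(g)}$ via the universal divisor is more careful than anything the paper spells out, and is a welcome addition.
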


\begin{proof} The proof follows ideas in \cite{Biswas, IJ} for curves over algebraically closed fields.
To simplify the notation we shall write $\Pic^d$ for $\Pic^d(\kc/B)$ throughout.

The first step is to show that the pull-back under the natural morphism $$\kc \to \Pic^1\cong \Pic^0,~x\mapsto \ko(x)\mapsto\ko(x-s)$$ induces isomorphisms
$$\Br(\kc)\cong \Br_1(\Pic^0) \text{ and }\Br(\kc)_s\cong \Br_1(\Pic^0)_s.$$
Indeed, the restriction 
map $\Br_1(\Pic^0))\subset \Br(\Pic^0)\to \Br(\kc)$ is compatible with the Leray spectral sequence for the two projections. Since for dimension reasons, the projection
$f\colon \kc\to B$ satisfies $H^0(B,R^2f_\ast\Gm)=0$, we
have $\Br_1(\kc)=\Br(\kc)$. Thus, there is a  commutative diagram 
$$\xymatrix{\Br(B)\ar[d]^=\ar@{^(->}[r]&\Br_1(\Pic^1)\ar[d]\ar[r]&H^1(B,R^1\check f_\ast\Gm)\ar[d]^\cong\ar[r]&H^3(B,\Gm)\ar[d]^=\\
\Br(B)\ar@{^(->}[r]&\Br(\kc)\ar[r]&H^1(B,R^1 f_\ast\Gm)\ar[r]&H^3(B,\Gm),}$$
which is enough to prove the claim.
Note that the injectivity of the pull-back of $\Br(B)$ to the Brauer groups of $\kc$ and $\Pic^1$ 
is a consequences of the existence of a section and the vertical isomorphism follows from the
isomorphism $R^1\check f_\ast\Gm\cong R^1f_\ast\Gm$.

In the second step we use the pull-backs under the morphisms
$$\kc^{(d)}\,\,\hookrightarrow \kc^{(d+1)},~(x_1,\ldots,x_d)\mapsto (x_1,\ldots,x_d,s)$$
and 
$$\Pic^d\,\,\hookrightarrow\Pic^{d+1},~L\mapsto L\otimes\ko(s),$$
to construct the commutative diagram
$$\xymatrix{\Br(\kc)_s&\ar[l]\Br_1(\kc^{(2)})&\ar[l]\cdots&\cdots&\ar[l]\Br_1(\kc^{(g)})_s\\
\Br_1(\Pic^1)_s\ar[u]^-\cong&\ar[l]_\cong\Br_1(\Pic^2)_s\ar[u]&\ar[l]_-\cong\cdots&\cdots&\ar[l]_-\cong\Br_1(\Pic^g)_s\ar[u]^-\cong.}$$
To conclude, observe that the composition in the upper row is split by $\alpha\mapsto \alpha^{(g)}$. Indeed, if $\alpha$ restricts trivially to the section, then the restriction of the $d$-fold exterior product $\alpha^{\times d+1}=\alpha\times\cdots\times \alpha\times\alpha$ to $\kc\times\cdots\times\kc\times \{s\}$ equals the $d$-fold
exterior product $\alpha^{\times d}=\alpha\times\cdots\times \alpha$.
\end{proof}

The important part for us is the surjectivity, i.e.\ any $\beta\in \Br_1(\Pic^0(\kc/B))_s$ is
of the form $\alpha^{(g)}$ for some $\alpha\in \Br(\kc)_s$. Here, we use the identification
$\Pic^0\cong\Pic^g$. Without it, the next result would need to be phrased for classes on $\Pic^g$.

\begin{prop}\label{prop:symcurve}
Assume $\kc\to B$ is a family of smooth projective curves of genus $g$
with a section. Then  for any Brauer class $\beta=\alpha^{(g)}\in \Br_1(\Pic^0(\kc/B))_s$ 
with $\alpha\in \Br(\kc)_s$ one has
$$\ind(\beta)\mid\ind(\alpha)^g.$$
 In particular, if $\ind(\alpha)=\per(\alpha)$,
then $$\ind(\beta)\mid\per(\beta)^g.$$
\end{prop}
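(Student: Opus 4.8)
The plan is to move the computation from $\Pic^0(\kc/B)$ to the relative symmetric product $\kc^{(g)}$ and there to build a twisted sheaf of the required rank as an exterior product. It is convenient to dispose of the second assertion first: by Lemma \ref{lem:Jaco} the assignment $\alpha\mapsto\beta=\alpha^{(g)}$ is a group \emph{isomorphism} $\Br(\kc)_s\cong\Br_1(\Pic^0(\kc/B))_s$, hence order-preserving, so $\per(\beta)=\per(\alpha)$. Granting the first assertion together with $\ind(\alpha)=\per(\alpha)$ then gives $\ind(\beta)\mid\ind(\alpha)^g=\per(\alpha)^g=\per(\beta)^g$. Everything therefore reduces to the divisibility $\ind(\beta)\mid\ind(\alpha)^g$.

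To prove this I would first replace $\Pic^0(\kc/B)$ by $\kc^{(g)}$. Since the relative symmetric product of a smooth family of curves is smooth, the relative Abel--Jacobi map $\phi\colon\kc^{(g)}\to\Pic^g(\kc/B)\cong\Pic^0(\kc/B)$ of (\ref{eqn:JacAb}) is a proper birational morphism of smooth projective varieties over $B$. It therefore induces an isomorphism on Brauer groups, with $\phi^\ast\beta=\alpha^{(g)}$ by the second step of the proof of Lemma \ref{lem:Jaco}, and an isomorphism of function fields. As the index of a Brauer class depends only on its restriction to the generic point, this yields $\ind(\beta)=\ind(\alpha^{(g)})$, where now $\alpha^{(g)}\in\Br(\kc^{(g)})$. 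It thus suffices to prove $\ind(\alpha^{(g)})\mid\ind(\alpha)^g$.

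Now the construction. Write $W\coloneqq\kc\times_B\cdots\times_B\kc$ ($g$ factors), with projections $p_i\colon W\to\kc$ and quotient $\pi\colon W\to\kc^{(g)}=W/\mathfrak{S}_g$; the class $\alpha^{(g)}$ is the one satisfying $\pi^\ast\alpha^{(g)}=\bigotimes_{i=1}^g p_i^\ast\alpha$. Choose an $\alpha$-twisted locally free sheaf $E$ of rank $m\coloneqq\ind(\alpha)$ over a dense open $U\subset\kc$ (equivalently, the central division algebra representing $\alpha$ over $K(\kc)$, spread out over $U$). On $W$ the exterior product $\ke\coloneqq p_1^\ast E\otimes\cdots\otimes p_g^\ast E$ is then a $\bigl(\bigotimes_i p_i^\ast\alpha\bigr)$-twisted locally free sheaf of rank $m^g$, defined over the dense open $U\times_B\cdots\times_B U$; by symmetry of the tensor product it carries a canonical $\mathfrak{S}_g$-equivariant structure permuting the factors. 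Over the complement of the big diagonal $\mathfrak{S}_g$ acts freely, so $\pi$ restricts to an étale $\mathfrak{S}_g$-torsor and the equivariant $\pi^\ast\alpha^{(g)}$-twisted sheaf $\ke$ descends to an $\alpha^{(g)}$-twisted locally free sheaf of the same rank $m^g$ on a dense open of $\kc^{(g)}$. Reading off the index at the generic point, this single sheaf gives $\ind(\alpha^{(g)})\mid m^g=\ind(\alpha)^g$, completing the argument.

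The step I expect to be the main obstacle is making the descent precise: one must check that the symmetric structure on $\ke$ is genuine $\mathfrak{S}_g$-descent data lying \emph{over} the natural $\mathfrak{S}_g$-equivariance of the twisting gerbe, and not merely a family of isomorphisms defined up to scalars. Restricting to the free locus (where the index is nonetheless computed) sidesteps the quotient singularities of $\kc^{(g)}$ along the diagonal and reduces the compatibility to the coherence of the symmetry isomorphisms of $\otimes$, but this bookkeeping is the one point that genuinely requires care. Equivalently, one may run the whole construction with the sheaf of Azumaya algebras $\bigotimes_i p_i^\ast\kend(E)$, whose $\mathfrak{S}_g$-action is an honest algebra action of degree $m^g$, trading the twisted-sheaf descent for the more transparent étale descent of Azumaya algebras.
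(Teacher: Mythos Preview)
Your proposal is correct and follows essentially the same route as the paper: form the $g$-fold exterior product on $\kc\times_B\cdots\times_B\kc$, use the natural $\mathfrak{S}_g$-linearisation to descend on the complement of the big diagonal, and read off the index on a dense open. The paper phrases this with Azumaya algebras $\ka\boxtimes\cdots\boxtimes\ka$ rather than twisted sheaves and takes a gcd over all $\ka$ rather than fixing one of minimal rank, but these are cosmetic differences---indeed, you already note the Azumaya-algebra variant in your final paragraph, and it is precisely the version the paper uses.
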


\begin{proof}
Assume $\ka$ is an Azumaya algebra on $\kc$ that represents $\alpha$. Then the exterior
product $\kb\coloneqq\ka\boxtimes\cdots\boxtimes\ka$ on $\kc^n\coloneqq\kc\times_B\cdots\times_B\kc$ represents
the class $\alpha^{\times n}$ on $\kc^n$ which descends to the class $\beta=\alpha^{(n)}$ on $\kc^{(n)}$.\smallskip

The algebra $\kb$, which satisfies $\rk(\kb)=\rk(\ka)^n$, comes with a natural linearisation with respect to the action of ${\mathfrak S}_n$. This action is free on the complement
of the big diagonal $\{(x_1,\ldots,x_n)\mid \exists i\ne j: x_i=x_j\}$. Hence, $\kb$ descends to an Azumaya algebra on an open subset  $U\subset\kc^{(n)}$ that represents the restriction $\beta|_U$.
The gcd of the $\rk(\ka)^{1/2}$ over all $\ka$ representing $\ka$ computes $\ind(\alpha)$
and hence $\ind(\beta)$ divides the gcd of all $\rk(\ka)^{n/2}$.
\end{proof} 

In contrast to the discussion in \S\S\! \ref{sec:PIAbsch} \& \ref{sec:HK}, the arguments in this section do not proceed by constructing sheaves on some twist $A_\beta$ and then translating those back to $\beta$-twisted sheaves on the original variety, which here is $\Pic^0(\kc/B)$.
So in this sense, the arguments are much more direct.
\subsection{Period-index for general projective varieties}\label{sec:Improv}
This short section is an interlude
on general projective varieties. We briefly explain how to apply the techniques in \S\!    \ref{sec:proof} to improve upon the result in \cite{HuyMa}. As the
improvement is minor and far from being optimal, with an exponent still depending on the geometry of $X$ and not just its dimension, we will only sketch the main idea.\smallskip

Recall that one of the main results in \cite{HuyMa} asserted that
$\ind(\alpha)\mid\per(\alpha)^{2g}$ for all Brauer classes $\alpha\in \Br(X)$ on an arbitrary projective variety $X$ with $g$ being the genus of a very ample complete intersection curve.
We claim that the techniques developed here now prove the better bound:
$$\ind(\alpha)\mid\per(\alpha)^{g}.$$

Indeed, the proof in \cite{HuyMa} uses  the existence of a family
$$\xymatrix{\kc\ar[d]_-h\,\ar@{^(->}[r]\ar[d]&\bar\kc\ar@{->>}[r]\ar[d]&X\\B\ar@{^(->}[r]&\PP&}$$
with $\PP$ a projective space of dimension $\dim(X)-1$, the projection $\kc\to X$ being
a blow-up, and $\bar\kc\to \PP$ a family of complete intersection curves with its
smooth part $\kc\to B$. Also note that $\bar\kc\to \PP$ comes with a section, the pre-image of
a closed point of $X$ contained in the center of the blow-up, which trivialises all Brauer classes
coming from $X$.\smallskip

The dual fibration $\Pic^0(\kc/B)\to B$ is principally polarised and we pick one such
principal polarisation $L$. If $\alpha\in \Br(X)$ is of period $r$, then $L^r$ glues
to a line bundle on the twist $\Pic_\alpha^0(\kc/B)$ if the obstruction $o(\alpha,L^r)\in \Br(B)$
vanishes. The obstruction class is the image under the boundary map
$H^1(B,R^1h_\ast\mu_r)\to H^2(B,\Gm)$ of a certain natural class
$\alpha'\in H^1(B,R^1h_\ast \mu_r)$ induced by $\alpha$. It then suffices to show that $\alpha'$ extends
to a class in $H^1(\PP,\kk_r)$ with an appropriately defined sheaf $\kk_r$ and that
there exists an extension of $\kk_r$ by $\Gm$ on $\PP$ that extends the given one
on $B$. All this can be achieved and then $o(\alpha,L^r)\in H^2(B,\Gm)$ is the image
of a class under $H^2(\PP,\Gm)\to H^2(B,\Gm)$ and, therefore, trivial, for $\PP$ is a projective
space.\smallskip

The arguments in \S\! \ref{sec:HK} go indeed through and are at places even easier. For example,
working with a covering family of curves makes it unnecessary to restrict to certain
finite index subgroups of $\Br(X)$. Indeed, all classes $\alpha\in \Br(X)$ immediately
lead to twists $\Pic_\alpha^0(\kc/B)$. Hence, all coprimality assumptions on $\per(\alpha)$ can
be avoided.\smallskip


\subsection{Moduli spaces of one-dimensional sheaves on K3 surfaces}\label{sec:ModuliK3curves}
As an application we mention the case of moduli spaces of one-dimensional
sheaves on K3 surfaces.  For more details and references concerning moduli
spaces of sheaves on K3 surfaces we refer to \cite[Ch.\ 10.3]{HuyK3} or \cite[Ch.\ 6]{HuLe}.
\smallskip

We consider a complex projective K3 surface $S$ together with an ample linear system $|H|$ of genus $g$, i.e.\ $(H.H)=2g-2$. Then we let  $$X\coloneqq M(v)$$ be the moduli space of stable sheaves on $S$ with Mukai vector $v=(0,H,1)$. In other words, $M(v)$ generically parametrises line bundles of degree $g$ on curves $C\in |H|$. Stability is taken with respect to a generic polarisation which implies that $X$ is a projective hyperk\"ahler manifold of dimension $2g$ and the support
map describes a Lagrangian fibration $X\to |H|$.\smallskip

Furthermore, there exists a universal family on $S\times X$ which is known to produce
a Hodge isometry $\tilde H(S,\ZZ)\supset v^\perp\cong H^2(X,\ZZ)$ and, as a consequence,
an isomorphism $\Br(S)\cong \Br(X)$ via the description of the Brauer group in terms of the exponential sequence.
\smallskip

The next result is a variant of Theorem \ref{thm1} for Lagrangian fibrations
but without any coprimality condition on the period.

\begin{cor}\label{cor:moduli}
For every class $\alpha\in \Br(X)$ one has
$$\ind(\alpha)\mid\per(\alpha)^g.$$
\end{cor}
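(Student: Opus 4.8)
The plan is to realise the smooth part of the Lagrangian fibration $X\to|H|\cong\PP^g$ as a Jacobian fibration and then to conclude with the exterior product construction of Proposition \ref{prop:symcurve}. First I would introduce the incidence variety $\kc=\{(x,C)\in S\times|H|\mid x\in C\}$ together with its two projections $p\colon\kc\to S$ and $h\colon\kc\to|H|$. Since $|H|$ is base point free, $p$ exhibits $\kc$ as a $\PP^{g-1}$-bundle over $S$ (the curves through a fixed point form a hyperplane in $|H|\cong\PP^g$), while over the open locus $B\subset|H|$ of smooth members $h\colon\kc_B\to B$ is a family of smooth projective curves of genus $g$. The relative Abel--Jacobi map $\kc_B^{(g)}\to\Pic^g(\kc_B/B)$, the relative version of (\ref{eqn:JacAb}), is birational, and $\Pic^g(\kc_B/B)$ is precisely the smooth part of the Lagrangian fibration $X\to|H|$.

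Because $p$ is a projective bundle, pullback gives $\Br(\kc)\cong\Br(S)$, and on $\kc$ one has $\ind=\per$: the Azumaya algebra of rank $\per(\alpha)^2$ realising the corresponding class on the K3 surface $S$, which exists by de Jong's theorem \cite{dJ} (resp.\ \cite{HuySchr}), pulls back to an Azumaya algebra of the same rank on $\kc$. Together with the Mukai Hodge isometry $\Br(S)\cong\Br(X)$ coming from the universal family on $S\times X$, this furnishes three canonically identified Brauer groups $\Br(S)\cong\Br(\kc)\cong\Br(X)$; these identifications preserve the period, and $\ind=\per$ holds for every class on $\kc$.

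Given $\alpha\in\Br(X)$, write $\alpha_S\in\Br(S)$ for the corresponding class and $\hat\alpha\coloneqq p^\ast\alpha_S\in\Br(\kc)$. The decisive point, which I expect to be the main obstacle, is to verify that the restriction of $\alpha$ to the smooth locus $\Pic^g(\kc_B/B)\subset X$, pulled back to its birational model $\kc_B^{(g)}$, equals the descended exterior product $\hat\alpha^{(g)}=\prod_{i=1}^g{\rm pr}_i^\ast\hat\alpha$; equivalently, that the Mukai identification $\Br(S)\cong\Br(X)$ is compatible with the map $\alpha\mapsto\alpha^{(g)}$ of Lemma \ref{lem:Jaco}. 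This is a Hodge-theoretic statement: the transcendental lattices $T(S)$, $T(\kc)$ and $T(X)$ are canonically identified via projective bundle invariance, birational invariance and the Mukai isometry, and one must check that the composite $\Br(S)\cong\Br(\kc)\to\Br(X)$ given by $\hat\alpha\mapsto\hat\alpha^{(g)}$ agrees with the Mukai isomorphism as maps $\Hom(T(S),\QQ/\ZZ)\to\Hom(T(X),\QQ/\ZZ)$, and not merely up to an automorphism of $\Br(X)$. A further subtlety is that $\kc_B\to B$ carries no global section, so Lemma \ref{lem:Jaco} cannot be applied verbatim and the comparison must be made over the generic point of $B$.

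Once the matching is established, the bound follows at once from the construction in Proposition \ref{prop:symcurve}. Starting from an Azumaya algebra $\ka$ of rank $\per(\alpha)^2$ on $\kc$ representing $\hat\alpha$, the exterior product $\ka\boxtimes\cdots\boxtimes\ka$ on $\kc_B\times_B\cdots\times_B\kc_B$ has rank $\per(\alpha)^{2g}$ and, being $\mathfrak S_g$-linearised, descends to an Azumaya algebra on the free locus $U\subset\kc_B^{(g)}$ representing $\hat\alpha^{(g)}$, hence on a dense open subset of $X$. Since the index of a Brauer class is computed at the generic point, $\ind(\alpha)=\ind(\hat\alpha^{(g)})$ divides $\per(\alpha)^g$, with no coprimality hypothesis on $\per(\alpha)$, as claimed.
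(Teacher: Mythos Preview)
Your approach is essentially the paper's own: realise the smooth locus of $X\to|H|$ as $\Pic^g(\kc_B/B)$, identify $\Br(S)\cong\Br(X)$ with the map $\alpha\mapsto\alpha^{(g)}$ of Lemma~\ref{lem:Jaco}, and then invoke the exterior product construction of Proposition~\ref{prop:symcurve}. The paper likewise isolates the compatibility of the Mukai isomorphism $\Br(S)\cong\Br(X)$ with $\alpha\mapsto\alpha^{(g)}$ as the crux, and resolves it not by a direct Hodge-theoretic computation but by appealing to the birational identification $S^{[g]}\dashrightarrow X=M(0,H,1)$ together with the Hilbert scheme analysis of \S\!~\ref{sec:Hilb}; your explicit use of the $\PP^{g-1}$-bundle $\kc\to S$ to pull back de Jong's Azumaya algebra is a clean way to set this up, and your flagging of the missing section is apt (though harmless for the exterior product argument itself, which does not use it).
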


\begin{proof} Consider the universal family of smooth curves $\kc\to B\coloneqq|H|_{\text{sm}}$.
Then $\Pic^g(\kc/B)$ is an open subset of $X$ compatible with the projection $X\to |H|$.

We claim that the composition
$$\xymatrix@C=15pt{\Br(S)\ar[r]^-\sim&\Br(X)\ar@{^(->}[r]&\Br(\Pic^g(\kc/B))}$$ of the above isomorphism $\Br(S)\cong \Br(X)$ with the restriction
to the open subset $\Pic^g(\kc/B)\subset X$ is nothing but the map $\alpha\mapsto \alpha^{(g)}$ in Lemma \ref{lem:Jaco}. Then Proposition \ref{prop:symcurve} would immediately imply the assertion. The assertion follows from the discussion in the next section and the fact that
the two isomorphisms $\Br(S)\cong \Br(X)$ and $\Br(S)\cong\Br(S^{[n]})$ are compatible with
the birational identification of $S^{[g]}$ and $X$.\TBC{It is a little circular.}
\end{proof}

As the Hilbert scheme $S^{[g]}$  is in fact birational to $\kc^{(g)}$ and hence to $\Pic^g(\kc/B)$, it  can also be seen as a special case
of Theorem \ref{thm2} or, equivalently, Corollary \ref{cor:thm2} below.
\begin{remark}
The other moduli spaces $M(v)$ with $v=(0,H,d+1-g)$ can be treated similarly by working with
the Abel--Jacobi map $\kc^{(d)}\to\Pic^d(\kc/B)$ with $d\geq g$. However, the exponent becomes worse, namely $\per(\alpha)^d$ instead of $\per(\alpha)^g$.
But one can always reduce to $2g-2\geq d$
\end{remark}

\begin{remark}
According to \cite[Thm.\ 1.2]{HuyMa2}, every Lagrangian fibration of a hyperk\"ahler mani\-fold
of ${\text K3}^{[n]}$-type is birational to some relative twisted 
Picard scheme $\Pic_\gamma^0(\kc/|H|)\to |H|$. Generalising
Proposition \ref{prop:symcurve} to the twisted case would  lead to a stronger version of
Theorem \ref{thm1} for all Lagrangian fibrations of ${\text K3}^{[n]}$-type with no
(or weaker) coprimality assumptions on $\per(\alpha)$.
\end{remark}

\subsection{Hilbert schemes of K3 surfaces}\label{sec:Hilb}
For a K3 surface $S$, we denote by $S^{(n)}$ its symmetric product and by $S^{[n]}$
its Hilbert scheme of subschemes of length $n$. The Hilbert scheme can be viewed as a resolution
of the singular symmetric product via the Hilbert--Chow morphism $\pi\colon S^{[n]}\to S^{(n)}$.\smallskip

We will use the Hodge theoretic description of the Brauer group and so have to work over
the complex number. Thus, from now on,  $S$ denotes a complex projective K3 surface. For fixed $n-1$ pairwise distinct points $x_1,\ldots,x_{n-1}\in S$  the map $x\mapsto \{x_1,\ldots,x_{n-1},x\}$ extends to an embedding
\begin{equation}\label{eqn:ShatHilb}
\hat S\coloneqq \text{Bl}_{\{x_i\}}(S)\,\,\hookrightarrow S^{[n]}.
\end{equation}

\begin{lem}
The pull-back under the embedding {\rm(\ref{eqn:ShatHilb})} induces a primitive embedding
of Hodge structures $$H^2(S^{[n]},\ZZ)\,\hookrightarrow H^2(\hat S,\ZZ)$$
and an isomorphism 
\begin{equation}\label{eqn:BrSHilb2}
\Br(S^{[n]})\cong \Br(\hat S)\cong \Br(S).
\end{equation}

The inverse of this isomorphism is given by 
\begin{equation}\label{eqn:BrSHilb}
\Br(S)\to \Br(S^{(n)})\to \Br(S^{[n]}),~\alpha\mapsto \alpha^{(n)}\mapsto
\alpha^{[n]}\coloneqq\pi^\ast\alpha^{(n).}
\end{equation}
\end{lem}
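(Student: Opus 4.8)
The plan is to make the pullback $\iota^\ast$ on $H^2$ completely explicit in the Beauville decomposition $H^2(S^{[n]},\ZZ)\cong H^2(S,\ZZ)\oplus\ZZ\delta$, with $2\delta=[\Delta]$ the class of the non-reduced locus (the exceptional divisor of $\pi$), compared against the blow-up decomposition $H^2(\hat S,\ZZ)=b^\ast H^2(S,\ZZ)\oplus\bigoplus_{i=1}^{n-1}\ZZ e_i$, where $b\colon\hat S\to S$ is the blow-down and $e_i=[E_i]$. The first, formal, step is the geometric identity $\pi\circ\iota=\sigma\circ b$, where $\sigma\colon S\to S^{(n)}$ sends $y$ to $x_1+\cdots+x_{n-1}+y$; this is immediate from the definition of $\iota$. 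For $c\in H^2(S,\ZZ)$, viewed inside $H^2(S^{(n)},\ZZ)=H^2(S,\ZZ)$, it gives $\iota^\ast\pi^\ast c=b^\ast\sigma^\ast c=b^\ast c$, the first $n-1$ factors of $\sigma$ being constant and hence contributing nothing; so $\iota^\ast$ is $b^\ast$ on the $H^2(S,\ZZ)$-summand.

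The delicate point, which I expect to be the main obstacle, is the identity $\iota^\ast\delta=\sum_i e_i$, with the factor exactly $1$ and not $2$. Since a member $\{x_1,\dots,x_{n-1},x\}$ is non-reduced precisely when $x$ collides with some $x_i$, the set $\iota^{-1}(\Delta)$ is supported on $\bigcup_i E_i$, and near each $E_i$ the Hilbert scheme is \'etale-locally a product of $S^{[2]}$ with a configuration of $n-2$ fixed distinct points, so the multiplicity may be computed for $n=2$. There I would write $S^{[2]}=Z/\tau$ with $Z=\mathrm{Bl}_{\mathrm{diag}}(S\times S)$, exceptional divisor $\tilde\Delta$, and swap involution $\tau$; since $\tau$ fixes $\tilde\Delta$ pointwise and acts by $-1$ on its normal direction, the quotient map $q\colon Z\to S^{[2]}$ is ramified along $\tilde\Delta$ with $q^\ast\Delta=2\tilde\Delta$. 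The map $x\mapsto\{x_i,x\}$ lifts to the strict transform $\tilde\iota\colon\hat S\to Z$, and because $y\mapsto(x_i,y)$ meets the diagonal transversally in one point one gets $\tilde\iota^\ast\tilde\Delta=E_i$ (confirmed by $\tilde\Delta\cdot E_i=-1=E_i^2$). Hence $\iota=q\circ\tilde\iota$ yields $\iota^\ast[\Delta]=\tilde\iota^\ast q^\ast\Delta=2\,\tilde\iota^\ast\tilde\Delta$, locally $2E_i$ and so globally $2\sum_i E_i$, i.e. $\iota^\ast\delta=\sum_i e_i$. Consequently $\iota^\ast(c,m\delta)=(b^\ast c,\,m\sum_i e_i)$ is injective, and it is primitive because the cokernel is $(\bigoplus_i\ZZ e_i)/\ZZ(\sum_i e_i)\cong\ZZ^{\,n-2}$, which is torsion free; compatibility with the Hodge structures is automatic for a pullback.

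For the Brauer isomorphism (\ref{eqn:BrSHilb2}) I would use that $H^3$ is torsion free for all three spaces (Remark \ref{rem:TorsionHK}(i) for $S^{[n]}$, and $H^3=0$ for the K3 surface $S$ and its blow-up $\hat S$), so each Brauer group equals its connected part and, by the exponential-sequence description in \S\! \ref{sec:setup}, is $T'(\,\cdot\,)\otimes\QQ/\ZZ$ with $T'=H^2(\,\cdot\,,\ZZ)/\NS$. As $\delta$ and the $e_i$ are algebraic $(1,1)$-classes, one has $T'(S^{[n]})=T'(\hat S)=T'(S)$, and by the explicit shape of $\iota^\ast$ and $b^\ast$ these identifications are the identity on $T'(S)$. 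Functoriality of the exponential description then shows that $\iota^\ast\colon\Br(S^{[n]})\to\Br(\hat S)$ and $b^\ast\colon\Br(S)\to\Br(\hat S)$ are isomorphisms (the latter also because $\Br$ is a birational invariant of smooth projective surfaces), yielding $\Br(S^{[n]})\cong\Br(\hat S)\cong\Br(S)$.

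Finally, to identify the inverse as $\alpha\mapsto\alpha^{[n]}=\pi^\ast\alpha^{(n)}$, I would rerun the first computation on Brauer classes: $\pi\circ\iota=\sigma\circ b$ gives $\iota^\ast\alpha^{[n]}=b^\ast\sigma^\ast\alpha^{(n)}$, and $\sigma^\ast\alpha^{(n)}$ is the product of $\alpha$ with $n-1$ pullbacks of $\alpha$ along the constant maps $y\mapsto x_i$, each trivial since $\alpha$ restricts trivially to a point; hence $\sigma^\ast\alpha^{(n)}=\alpha$ and $\iota^\ast\alpha^{[n]}=b^\ast\alpha$, which under the identifications above is $\alpha$ itself. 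Thus $\alpha\mapsto\alpha^{[n]}$ is a one-sided inverse of the isomorphism $\iota^\ast$, hence its inverse. The only preliminary to check is that $\alpha^{(n)}\in\Br(S^{(n)})$ is well defined, by descent of $\alpha^{\boxtimes n}$ from $S^n$ along the normal quotient $S^n\to S^{(n)}$ exactly as in Lemma \ref{lem:Jaco}, so that $\pi^\ast\alpha^{(n)}$ makes sense.
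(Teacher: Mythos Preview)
Your argument is correct and follows the same strategy as the paper: compute $\iota^\ast$ explicitly in the Beauville decomposition, observe that $H^2(S,\ZZ)$ goes to $b^\ast H^2(S,\ZZ)$ and $\delta$ goes to $\sum_i e_i$, deduce primitivity from the latter, obtain the Brauer isomorphism via the exponential-sequence description of $\Br$, and identify the inverse by checking that $\iota^\ast\alpha^{[n]}=b^\ast\alpha$. The paper's proof is terser---it simply asserts that $\delta$ restricts to $\sum_i[E_i]$ without the local reduction to $S^{[2]}=\mathrm{Bl}_{\mathrm{diag}}(S\times S)/\tau$ that you carry out, and it phrases primitivity as ``$\sum_i[E_i]$ is a primitive class'' rather than computing the cokernel---but the content is the same.
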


\begin{proof} According to \cite[\S\! 6]{Beauv}, there exists an isomorphism $H^2(S^{[n]},\ZZ)\cong H^2(S,\ZZ)\oplus\ZZ\delta$. Here, $2\delta$ is the class of the exceptional divisor
of the Hilbert--Chow morphism $\pi\colon S^{[n]}\to S^{(n)}$ and  the inclusion $i\colon H^2(S,\ZZ)\,\hookrightarrow H^2(S^{[n]},\ZZ)$ is given by the condition that $i(a)$ is the pull-back 
under $\pi$ of the class  on the symmetric product $S^{(n)}$ that pulls back to $\sum{\rm pr}_i^\ast a\in H^2(S^n,\ZZ)$ under the quotient map $S^n\to S^{(n)}$.\smallskip

Observe that composing $i$ with the pull-back under (\ref{eqn:ShatHilb}) is simply the
natural embedding $H^2(S,\ZZ)\,\hookrightarrow H^2(\hat S,\ZZ)$. To prove the first assertion, it then suffices to remark that the exceptional class $\delta$ restricts to
the sum $\sum[E_i]$ of the exceptional curves of the blow-up $\hat S\to S$, which is a primitive class in $H^2(\hat S,\ZZ)$.\smallskip

In order to prove (\ref{eqn:BrSHilb2}), we use that $\Br(S)$ is naturally identified with the torsion of
the analytic Brauer group
$H^2(S,\ko_S^\ast)\cong H^2(S, \ko)/\im(H^2(S,\ZZ)\to H^2(S,\ko))$ and similarly for $S^{[n]}$. Then use again the above isomorphism of Hodge structures which in particular induces
$H^{1,1}(S^{[n]},\ZZ)\cong H^{1,1}(S,\ZZ)$.\smallskip

For (\ref{eqn:BrSHilb}), we imitate the arguments  in \S\! \ref{sec:Jac} for families of curves. It suffices to observe that the composition of (\ref{eqn:BrSHilb}) with (\ref{eqn:BrSHilb2}) is the identity.\TBC{CHeck this again.}
\end{proof}

Then Theorem \ref{thm2} is an immediate consequence and we state it again as:

\begin{cor}\label{cor:thm2}
Assume $S$ is a K3 surface. Then for any Brauer class $\beta\in\Br(S^{[n]})$ on the Hilbert scheme $S^{[n]}$ 
one has
$$\ind(\beta)\mid\per(\beta)^n.$$
\end{cor}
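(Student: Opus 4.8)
The plan is to reduce the statement on the Hilbert scheme to the period--index equality $\ind(\alpha)=\per(\alpha)$ on the K3 surface $S$ itself, exploiting the explicit description of the isomorphism $\Br(S)\cong\Br(S^{[n]})$ supplied by the preceding lemma. By (\ref{eqn:BrSHilb2}) and (\ref{eqn:BrSHilb}), every class $\beta\in\Br(S^{[n]})$ is of the form $\beta=\alpha^{[n]}=\pi^\ast\alpha^{(n)}$ for a unique $\alpha\in\Br(S)$, where $\pi\colon S^{[n]}\to S^{(n)}$ is the Hilbert--Chow morphism and $\alpha^{(n)}$ is the descent of the exterior product $\alpha^{\times n}$ to the symmetric product. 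As this identification is a group isomorphism, it preserves orders, so $\per(\beta)=\per(\alpha)$, and it therefore suffices to prove $\ind(\beta)\mid\per(\alpha)^n$.

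First I would invoke de Jong's theorem \cite{dJ} (or \cite{HuySchr} in the K3 case), which not only gives $\ind(\alpha)=\per(\alpha)$ but realises $\alpha$ by an Azumaya algebra $\ka$ on all of $S$ with $\rk(\ka)=\per(\alpha)^2$. The construction then copies Proposition \ref{prop:symcurve}: the exterior product $\kb\coloneqq\ka\boxtimes\cdots\boxtimes\ka$ on $S^n\coloneqq S\times\cdots\times S$ is an Azumaya algebra of rank $\per(\alpha)^{2n}$ representing $\alpha^{\times n}$, and it carries a canonical $\mathfrak S_n$-linearisation. Since $\mathfrak S_n$ acts freely on the complement of the big diagonal, $\kb$ descends to an Azumaya algebra representing $\alpha^{(n)}$ on a dense open subset $U\subset S^{(n)}$; and because $\pi$ is an isomorphism away from the diagonal, pulling back yields an Azumaya algebra of the same rank $\per(\alpha)^{2n}$ representing $\beta$ on the dense open subset $\pi^{-1}(U)\subset S^{[n]}$.

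Finally I would pass to the generic point: since $S^{[n]}$ is smooth and projective, the restriction map $\Br(S^{[n]})\to\Br(K(S^{[n]}))$ is injective and the index is computed on the generic fibre, so an Azumaya algebra of rank $\per(\alpha)^{2n}$ over a dense open subset already forces $\ind(\beta)\mid\per(\alpha)^n=\per(\beta)^n$. The argument is essentially formal once the lemma is in hand, and the only coprimality-free step that requires genuine care is the descent across the big diagonal, which is harmless because the index only sees a dense open subset. The one genuinely non-trivial input is the surface case $\ind(\alpha)=\per(\alpha)$ together with its realisation by a \emph{global} Azumaya algebra on $S$; this, rather than any feature of the Hilbert scheme, is where the real content of the result is concentrated.
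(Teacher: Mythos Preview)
Your proposal is correct and follows essentially the same approach as the paper: write $\beta=\alpha^{[n]}$ via the lemma, invoke de Jong to represent $\alpha$ by an Azumaya algebra $\ka$ with $\rk(\ka)=\per(\alpha)^2$, take the $\mathfrak S_n$-linearised exterior product $\ka^{\boxtimes n}$, descend away from the big diagonal, and read off the index on a dense open. The only cosmetic difference is that the paper routes the descent through the fibre product $S^{[n]}\times_{S^{(n)}}S^n$ rather than through $S^{(n)}$ followed by $\pi^\ast$, but over the complement of the diagonal these agree.
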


\begin{proof} The proof is similar to the one of Proposition \ref{prop:symcurve}.
By virtue of the lemma, we can write any Brauer class $\beta$ on the Hilbert scheme $S^{[n]}$
as $\alpha^{[n]}$ for some Brauer class $\alpha$ on the surface $S$. Using de Jong's solution of the period-index problem for surfaces, see \cite{dJ}, or the alternative proof for complex K3 surfaces in \cite{HuySchr}, we know that  $\alpha$ can be represented by an Azumaya algebra $\ka$ on $S$ of index $\ind(\ka)=\rk(\ka)^{1/2}=\per(\alpha)$.\MD{Is the index for surfaces really given by the minimum or only the gcd? Yes, this is somewhat special for surfaces: Maximal orders are reflexive and hence locally free. See vdBergh's account of de Jong.}\smallskip

Let us denote by $\kb$ the pull-back of the exterior product $\ka\boxtimes \cdots\boxtimes \ka$ on $S^n$  under the blow-up morphism $p\colon{\rm Bl}(S^n)\coloneqq S^{[n]}\times_{S^{(n)}} S^n\to S^n$.
Then $\kb$ is an Azumaya algebra on ${\rm Bl}(S^n)$ which represents the class $p^\ast\alpha^{\times n}$.
Clearly, $\kb$ is naturally linearised with respect to the natural ${\mathfrak S}_n$-action
on ${\rm Bl}(S^n)$. Thus, on the open set where the action is free, i.e.\ the complement of the big diagonal in $S^n$,  the Azumaya algebra $\kb$ descends to an Azumaya algebra on the complement $U\coloneqq S^{[n]}\,\setminus\, E$ of the exceptional locus $E\subset S^{[n]}$ of the Hilbert--Chow morphism. Hence, $\beta|_U$
is represented by an Azumaya algebra of rank $\rk(\ka)^n=\per(\alpha)^{2n}$ and, therefore, 
$\ind(\beta)\mid\per(\alpha)^n$. Since $\per(\alpha)=\per(\beta)$, this proves the result.
\end{proof}

\begin{remark}\label{rem:notfeas}
The proof above only shows $\ind(\beta)\mid\per(\beta)^n$, but not that there actually exists
an Azumaya algebra $\kb$ on $S^{[n]}$ whose index $\ind(\kb)=\rk(\kb)^{1/2}$ divides $\per(\beta)^n$. Examples of Brauer classes on a scheme not representable by an Azumaya
algebra that realises the index at the generic point have been studied before,
e.g.\ by Antieau and Williams \cite{AnW,AW}. \smallskip

The same issue is the major obstacle to apply deformation theory, e.g.\ via hyperholomorphic
bundles and twistor spaces, to generalise our results to arbitrary hyperk\"ahler manifolds.
To even start the process one would need a sheaf of Azumaya algebras or a locally free twisted
sheaf on all of $X$. But even when this can be guaranteed, one would still need to ensure
stability and control over the second Chern class.
\end{remark}

\subsection{Surface decomposable hyperk\"ahler manifolds}\label{sec:surfdecomp}
The Hilbert scheme $S^{[n]}$ of a K3 surface $S$ is rationally dominated
by the product of surfaces $S^n$ and one could study more generally 
the period-index problem on hyperk\"ahler manifolds $X$ that are
rationally dominated by a product of surfaces $\pi\colon S_1\times \cdots \times S_n\to X$.
\smallskip

For Brauer classes $\alpha$ on $X$ that pull-back to product classes $\alpha_1\times\cdots\times\alpha_n$ the solution of the period-index problem for surfaces
can again be applied to prove that $\ind(\alpha)\mid\per(\alpha)^n$ at least for classes of a period coprime
to the degree of $\pi$. However, in general, the pull-back might involve terms coming from $H^1(S_i)\otimes H^1(S_j)$. This does not happen for the Hilbert scheme, as the
pull-back of the two-form on $S^{[n]}$ under the rational map $S^n\to S^{[n]}$ decomposes
into two forms on the factors. This gives an alternative argument to prove Corollary \ref{cor:thm2}
but only for classes with $\per(\alpha)$ coprime to $n!$.\smallskip

 Voisin \cite{VoisinDecom} introduced the notion of surface decomposable varieties.
Instead of dominating rational maps from products of surfaces she allows dominating correspondences
$$\xymatrix@C=15pt{S_1\times\cdots\times S_n&\ar[l]\ar[r] \Gamma&X,}$$
but imposes the condition that the pull-back of $H^{2,0}(X)$ is a linear combination of
pull-backs of classes in $H^{2,0}(S_i)$. Whenever a hyperk\"ahler admits such a surface decomposition, then the period-index problem in the sense
of Conjecture \ref{conj2} can be answered in the affirmative at least for those Brauer classes
with period coprime to the degrees of the two projections and the order of the torsion of $H^3(X,\ZZ)$. \smallskip

This can also be applied to generalised Kummer varieties $K_n(A)$, the second explicit series of examples
of hyperk\"ahler manifolds. 

\begin{cor}
Let $X=K_n(A)$ be a generalised Kummer variety of dimension $2n$ and $\alpha\in\Br(X)$. If $\per(\alpha)$ coprime to $(n+1)!$, then
$$\ind(\alpha)\mid\per(\alpha)^n.$$
\end{cor}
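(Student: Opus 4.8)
The plan is to realise $K_n(A)$ as a surface decomposable hyperk\"ahler manifold in Voisin's sense and then feed it into the period-index bound for such manifolds recorded in the preceding discussion. First I would use the standard description of $X=K_n(A)$ as the fibre over $0\in A$ of the Albanese (summation) morphism $A^{[n+1]}\to A$. Composing the rational quotient $A^{n+1}\dashrightarrow A^{(n+1)}$ with the Hilbert--Chow resolution and restricting to the summation-zero locus yields a dominant, generically finite rational map
$$Z\dashrightarrow X,\qquad Z\coloneqq\ker\bigl(A^{n+1}\xrightarrow{\ \mathrm{sum}\ }A\bigr)\cong A^n,$$
of degree $(n+1)!$, coming from the generically free action of $\mathfrak S_{n+1}$ permuting the $n+1$ coordinates subject to $\sum a_k=0$. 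Thus $X$ is birational to the resolution of $Z/\mathfrak S_{n+1}$, and $Z$ is an abelian variety of dimension $2n$.

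Next I would verify Voisin's decomposition condition. The symplectic form of $X$ is the restriction of that of $A^{[n+1]}$, whose pull-back to $A^{n+1}$ is $\sum_k\mathrm{pr}_k^\ast\sigma_A$; hence the pull-back of $\sigma_X$ to $Z\cong A^n$ is the translation-invariant alternating form $\sigma_A\otimes(I+J)$, where $I+J$ is the $n\times n$ matrix with entries $1+\delta_{ij}$ and determinant $\det(I+J)=n+1$. Diagonalising this form — equivalently, choosing an isogeny $Z\to B_1\times\cdots\times B_n$ onto a product of abelian surfaces — turns it into $\sum_i\mathrm{pr}_i^\ast\sigma_{B_i}$, so the resulting correspondence between $B_1\times\cdots\times B_n$ and $X$ meets the requirement that the pull-back of $H^{2,0}(X)$ be a linear combination of the $H^{2,0}(B_i)$. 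This is exactly Voisin's assertion \cite{VoisinDecom} that generalised Kummer varieties are surface decomposable. Crucially, since $\det(I+J)=n+1$ the form $\sigma_A\otimes(I+J)$ is already split over $\ZZ_p$ for every $p\nmid(n+1)$, so the diagonalising isogeny is needed only at primes dividing $n+1$ and its degree has no prime factor outside those of $(n+1)!$. Moreover, because $T(X)\cong T(A)$, the Brauer class transported to $\prod_iB_i$ spreads diagonally across the $n$ isogenous factors and becomes an honest product class $\sum_i\mathrm{pr}_i^\ast\alpha_i$ with $\alpha_i\in\Br(B_i)$.

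Finally I would invoke the general principle stated above: for a surface decomposable $X$ one has $\ind(\alpha)\mid\per(\alpha)^{\dim(X)/2}$ as soon as $\per(\alpha)$ is coprime to the degrees of the two projections of the correspondence and to $|H^3(X,\ZZ)_{\mathrm{tors}}|$. Here $\dim(X)/2=n$; the projection degrees are assembled from the permutation degree $(n+1)!$ and the isogeny degree, whose primes all divide $(n+1)!$; and $H^3(K_n(A),\ZZ)_{\mathrm{tors}}$ vanishes (known for $K_2$ and expected in general, cf.\ Remark \ref{rem:TorsionHK}), so that $\Br(X)=\Br(X)^0$. Combining these gives $\ind(\alpha)\mid\per(\alpha)^n$ for every $\alpha$ with $\per(\alpha)$ coprime to $(n+1)!$.

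The main obstacle I anticipate is precisely this degree bookkeeping. One must confirm that diagonalising the invariant symplectic form $\sigma_A\otimes(I+J)$ genuinely removes the cross terms $\mathrm{pr}_i^\ast\omega\wedge\mathrm{pr}_j^\ast\omega'$ via an isogeny introducing no primes beyond those of $(n+1)!$, and that this isogeny degree together with any torsion in $H^3$ is really subsumed by $(n+1)!$; only then does coprimality to $(n+1)!$ alone suffice. By contrast, the absence of cross terms for $S^{[n]}$ (where $H^1(S)=0$) is what allowed the clean equivariant descent in Corollary \ref{cor:thm2} without any coprimality hypothesis, so it is exactly the presence of $H^1(A)$ — and the consequent need to diagonalise $I+J$ — that is responsible for the condition $\per(\alpha)$ coprime to $(n+1)!$.
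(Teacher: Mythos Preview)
Your route is the same as the paper's in outline: both use the dominant rational map $A^n\dashrightarrow K_n(A)$ of degree $(n+1)!$ and then feed it into the surface-decomposition principle recorded just before the corollary. The paper's proof is a single sentence, asserting without further argument that under this map $H^{2,0}(K_n(A))$ pulls back into $\bigoplus_i p_i^\ast H^{2,0}(A)$. You go further and observe --- correctly --- that under the standard identification $A^n\cong\ker(A^{n+1}\xrightarrow{\mathrm{sum}}A)$ the pull-back of $\sigma_X$ is $\sigma_A\otimes(I+J)$, which carries genuine cross terms coming from $(-\mathrm{sum})^\ast\sigma_A$; so the paper's assertion, read literally for the coordinate projections, is not true. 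Your proposed fix, diagonalising $I+J$ via an isogeny to a product of abelian surfaces, is exactly the natural remedy, and since $\det(I+J)=n+1$ the primes introduced by such an isogeny lie among those of $(n+1)!$, so the coprimality hypothesis is unchanged. In this sense your argument is a more careful version of what the paper sketches; what you flag as ``the main obstacle'' is precisely the point the paper passes over.

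The one residual loose end you identify --- torsion-freeness of $H^3(K_n(A),\ZZ)$ for $n>2$ --- is indeed not settled in the literature (cf.\ Remark~\ref{rem:TorsionHK}), and the paper glosses over it as well. Strictly speaking, the corollary as stated is fully justified only under that expectation, or for $n=2$.
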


\begin{proof}
The assertion follows from the fact that $K_n(A)$ is rationally dominated by a map
$A^n\to K_n(A)$ of degree $(n+1)!$ which has the necessary property, namely  that $H^{2,0}(K_n(A))$
pulls back to the direct sum $\bigoplus p_i^\ast H^{2,0}(A)$.
\end{proof}

Potentially, a descent argument as in the proof of Corollary \ref{cor:thm2} 
could be used to get rid
of the coprimality condition on $\per(\alpha)$. However, $\ka\boxtimes\cdots\boxtimes\ka$ on $A^n$ is a priori no longer
linearised with respect to the relevant group action, here of ${\mathfrak S}_{n+1}$. But one could try to restrict
$\ka\boxtimes \cdots\boxtimes\ka$ on $A^{n+1}$ to $A^n\,\hookrightarrow A^{n+1}$, $(x_1,\ldots,x_n)\mapsto (x_1,\ldots,x_n,-\sum x_i)$, which then descends to $K_n(A)$ on a dense open subset. This would
eventually prove $\ind(\alpha)\mid\per(\alpha)^{n+1}$ for all Brauer classes
$\alpha\in \Br(K_n(A))$ on the $n$-dimensional hyperk\"ahler manifold $K_n(A)$. We leave the details to the reader.

\subsection{Hilbert schemes of elliptic K3 surfaces}\label{sec:HilbLag}
Assume now that the K3 surface $S$
comes with a genus one fibration $S\to \PP^1$. Then its symmetric product $S^{[n]}$ 
inherits a Lagrangian fibration $S^{[n]}\to (\PP^1)^{[n]}\cong\PP^n$.\smallskip

According to Theorem \ref{thm2} (or Corollary \ref{cor:thm2}),  we have $\ind(\alpha)\mid\per(\alpha)^n$
for all $\alpha\in \Br(S^{[n]})$, while Theorem \ref{thm1} only ensures it for
classes $\alpha\in \Br(S)$ with a period coprime to $$N_S\coloneqq \deg(S/\PP^1)\cdot \text{disc}(\NS(S)).$$
Here, $\deg(S/\PP^1)$ is the minimal degree of a multi-section $C\subset S$ of $S\to \PP^1$.
\smallskip

Indeed, a multi-section $C/\PP^1$ of degree $m$ of $S\to \PP^1$ induces a multi-section
$C^{[n]}/ (\PP^1)^{[n]}$ of degree $m^n$ of $S^{[n]}\to \PP^n$. For $S\to \PP^1$ the minimal degree of a multi-section coincides with the minimal degree of a relative polarisation.
Thus, in the definition of $N_X$ for $X=S^{[n]}$, cf.\ (\ref{eqn:NX}) and \S\! \ref{sec:proof}, the factors $\deg(\kl)$ and $\deg(\tilde \PP/\PP)$
contribute with the same prime factors. Since, $H^3(S^{[n]},\ZZ)$ is torsion free, cf.\ Remark
\ref{rem:TorsionHK}, the only
other contributing factor in the definition of $N_X$ is the index of $$\left(T(S^{[n]})_0 \subset H^2(S^{[n]},\ZZ)/\NS(S^{[n]})\right)\cong\left(T(S)\subset H^2(S,\ZZ)/\NS(S)\phantom{{}^{[n]}\!\!\!\!\!\!}\right)$$ which
is just $\text{disc}(\NS(S))$. Here we use that for a K3 surface $T(S)\to H^0(\PP^1,R^2h_\ast\ZZ)$ is trivial.\MD{Compute stalks of $R^2$ as $H^2$ of fibres and those are indeed
$\bigoplus H^2$ of the components $C_i$ and $T(S)\to H^2(C_i,\ZZ)$ is trivial.}\smallskip

We view this as further evidence, that eventually the condition in Theorem \ref{thm1} of $\per(\alpha)$ being coprime to $N_X$ should be superfluous.


\end{document}